\theoremstyle{plain}
\makeatletter\@namedef{subjclassname@2020}{\textup{2020} Mathematics Subject Classification}\makeatother
\newtheorem{Thm}{Theorem}[section]
\newtheorem{Lem}[Thm]{Lemma}
\newtheorem{Cor}[Thm]{Corollary}
\newtheorem{Pro}[Thm]{Proposition}
\theoremstyle{definition}
\newtheorem{Def}[Thm]{Definition}
\newtheorem{Przyk}[Thm]{Example}
\theoremstyle{remark}
\newtheorem{Rem}[Thm]{Remark}
\numberwithin{equation}{section}
\newenvironment{abece}[1][\em]{\begin{enumerate}[#1\ (a)]}{\end{enumerate}}
\newenvironment{property}[1]{\begin{equation}%
\label{eq:#1}\begin{minipage}[c]{.85\textwidth}\itshape}{\end{minipage}\end{equation}\ignorespacesafterend}
\newenvironment{property*}{\begin{displaymath}\begin{minipage}[c]{.9\textwidth}\itshape}{\end{minipage}\end{displaymath}\ignorespacesafterend}
\newcommand{\refoneq}[2][=]{\overset{\eqref{#2}}{#1}} \newcommand{\spacedeq}[2][=]{\hspace{#2}#1\hspace{#2}}
\newcommand{\rest}[1]{{\raise-.3ex\hbox{\big|}}_{#1}} %\newcommand{\kart}{\!\times\!}
\newcommand*{\ud}{\mathrm{\,d}}\newcommand{\ZP}{{\mathbb{Z}_+}}
\newcommand{\R}{\mathbb{R}}\newcommand{\C}{\mathbb{C}}\newcommand{\N}{\mathbb{N}}
\newcommand{\Borel}{\mathfrak{B}}
\newcommand{\pImp}[2]{\par(\ifthenelse{\equal{#2}{#1}}%
{$\Rightarrow$}{#1)$\Rightarrow$(#2})}
\newcommand{\lImp}[2]{\par(\ifthenelse{\equal{#2}{#1}}%
{$\Leftarrow$}{#1)$\Leftarrow$(#2})}
\newcommand{\NWSR}{the following conditions are equivalent:}
\newcommand{\BO}[1]{\boldsymbol{B}(#1)} \newcommand{\Hilb}{\mathcal{H}} 
\newcommand{\scp}[1]{\left\langle#1\right\rangle} 
\newcommand{\dom}{\mathscr{D}} 
 \newcommand{\eld}[1]{\ell^2(#1)}
\newcommand{\Tree}{\mathcal{T}}\newcommand{\Troot}{\mathsf{root}}
\newcommand{\parf}{\mathsf{p}}\newcommand{\Des}{\mathsf{Des}}
\newcommand{\Deso}{\Des^\circ}
\newcommand{\Chif}[1][]{\mathsf{Chi}\ifthenelse{\equal{#1}{}}{}{^{\langle #1\rangle}\!}}
\newcommand{\Chit}[2]{\mathsf{Chi}^{\langle #1\rangle}_{#2}}
\newcommand{\hip}[1]{\mathtt{h}^{(#1)}}\newcommand{\sqlow}[1]{_{\langle#1\rangle}}
\newcommand{\rootsum}{\mathop{ \makebox[1em]{ \makebox[0pt]{\ \ \,\scalebox{2}{$.$}}$\bigsqcup$ } }}
\newcommand{\blambda}{{\boldsymbol\lambda}}
\newcommand{\tlambda}{ {\boldsymbol{\tilde\lambda}} }
\title[Joint backward extension property]{Joint backward extension property\\for weighted shifts on directed trees} 
\author[P.\ Pikul]{Piotr Pikul}
\address{P. Pikul\\Instytut Matematyki\\%%
 Wydzia\l{} Matematyki i~Informatyki\\Uniwersytet Jagiello\'{n}ski\\%%
 ul.\ \L{}ojasiewicza 6\\30-348 Krak\'{o}w\\Poland}
\email{Piotr.Pikul@im.uj.edu.pl}
\keywords{Weighted shift, directed tree, backward extensions, Hilbert space,
power hyponormal, completely hyperexpansive}
\subjclass[2020]{Primary 47B37, 47B20; Secondary 47A20, 47B02, 05C20.}
\begin{document}
\begin{abstract}
Weighted shifts on directed trees are a decade-old generalisation of classical
shift operators in the sequence space $\ell^2$.
In this paper we introduce the joint backward extension property (JBEP) for
classes of weighted shifts on directed trees. If a class satisfies JBEP, the
existence of a common backward extension within the class for a family of
weighted shifts on rooted directed trees does not depend on the additional
structure of the big tree (of fixed depth).
We decide whether several classes of operators have JBEP. For subnormal or
power hyponormal weighted shifts the property is satisfied, while it fails
for completely hyperexpansive or quasinormal. Nevertheless, some positive
results on joint backward extensions of completely hyperexpansive weighted
shifts are proven.
\end{abstract}
\maketitle
\tableofcontents

%===============================================================================
\section{Introduction} %--------------------------------------------------------
The weighted shifts on $\ell^2$ are long known objects in the operator theory.
A survey on them can be found e.g.\ in \cite{shields}.
The study of backward extensions (sometimes called \emph{back-step} extensions)
was originally about whether or not the sequence $\{ a_n \}_{n=0}^\infty$ of
weights can be extended by a prefix $\{a_{-n}\}_{n=1}^{k}$ so that the weighted
shift with weights $\{ a_{n-k} \}_{n=0}^\infty$ has specified property (e.g.\ 
if it is subnormal). For the results on this topic we refer the reader to e.g.\ 
\cite{curtoq,back-ext2,back-ext-sub,hypexp-ext,back-ext3,%
back-ext1,fhs-momenty,fhs-posdef}.

In \cite{szifty} there was introduced a generalisation of the concept of a
weighted shift, namely a weighted shift on a directed tree. To state the idea
briefly, dealing with trees we allow an element of orthonormal basis to
have more than one successor (child). Formula defining action of such
operator takes the form (cf.\ \eqref{eq:naBazowym})
\[ S_\blambda e_v = \sum_{u\in\Chif(v)} \lambda_u e_u,\]
where $\Chif(v)$ is the set of children of the vertex $v$.

The study of backward extensions of weighted shifts on directed trees
was initiated in \cite{pikul} and this paper extends some results presented
therein. In the class of weighted shifts on directed trees we have great
freedom in defining a backward extension. Especially we can ask whether
a family of weighted shifts on rooted directed trees has a common backward
extension with certain property. The simplest case of such a joint extension
involves only one new vertex, a parent of all the former roots
(see \emph{rooted sum}, Definition~\ref{dfn:rooted-sum}), and weights
corresponding to new edges.

We can also consider more complex additional structure of the ``extended tree''.
In case of several classes of operators, given at most countable uniformly
bounded family of weighted shifts on rooted directed trees admitting
``$k$-step backward extensions'' (see Definition~\ref{dfn:backward-ext})
we can extend them all to a weighted shift on a single rooted directed tree,
regardless of the first $k$ ``levels'' of the extended tree
(see Theorem~\ref{thm:joint-extension}). This behaviour is a consequence
of the \emph{joint backward extension property} (see Section~\ref{sec:JBEP}).
The property is satisfied e.g.\ for power hyponormal or subnormal weighted shifts
while it fails to hold for completely hyperexpansive or quasinormal.

Section~\ref{R:lasy} recalls basic facts about weighted shifts on directed trees
and basic operations on said trees. While the notation is based mainly on
the directed forest approach presented in \cite{pikul}, it should not be
confusing to the reader familiar with \cite{szifty}.

In Section~\ref{sec:JBEP} we introduce concepts related to backward extensions.
Especially the formulation of joint backward extension property (JBEP) is given
(see Definition~\ref{dfn:ext-prop}) and its major consequence is proven. 
While we can extend the given collection of trees backwards by $k$ steps
in various different ways, the choice of the resulting tree
is irrelevant for the question of the existence of a weighted shift
extending the given collection of weighted shifts and belonging to
a class that has the JBEP (see Theorem~\ref{thm:joint-extension}).
There are given some examples of well known classes for which it can be easily
verified whether they satisfy JBEP.

Then we answer the questions whether the classes of subnormal, power hyponormal
(Section~\ref{sec:powhyp1}) and completely hyperexpansive (Section~\ref{sec:che1})
weighted shifts have the JBEP.
The answer is affirmative for the first two classes (see
Theorems~\ref{thm:sub-JBEP} and \ref{thm:powhyp-JBEP}) but not for the third one
(see Example~\ref{ex:che-nonjoint}). Nevertheless, in the last section we provide
some positive results about the existence of joint backward extensions for
completely hyperexpansive weighted shifts (see Theorems~\ref{thm:che-rootsum}
and \ref{thm:che-joint2}).

%\section{Notation and terminology}
We denote by $\N$ and $\ZP$ the sets of positive and non-negative integers, respecively.
Standard symbols $\R$ and $\C$ stand for the sets of real and complex numbers.
We will use the disjoint union symbol ``$\sqcup$'' to emphasise
the disjointness of considered sets.
Given a topological space $X$, by $\Borel(X)$ we mean the $\sigma$-algebra of
Borel subsets of $X$. For $t\in X$ we denote by $\delta_t$ the Borel probability
measure on $X$ concentrated on $\{t\}$ (Dirac measure).
We follow the conventions that $0^0=1$ and $\sum_{v\in\emptyset} x_v=0$.
If $f\colon A\to A$ is any function, then $f^0=\mathrm{id}_A$.
For any $k\in\N$, by convention the function $t\mapsto t^{-k}=\frac{1}{t^k}$ attains
$+\infty$ at $0$.

Throughout this paper all Hilbert spaces will be over the complex numbers.
For a Hilbert space $\Hilb$ we denote the algebra of all bounded linear
operators on $\Hilb$ by $\BO{\Hilb}$.

We recall that an operator $T\in \BO{\Hilb}$ is said to be \emph{subnormal}
if it is a restriction of some normal operator $N\in\BO{\mathcal K}$ (on a possibly
larger Hilbert space $\mathcal K$) to $\Hilb$. Clearly $\Hilb$ is an invariant
subspace for $N$. It is called \emph{hyponormal} if $[T^*, T]:=T^*T-TT^*\geq 0$.
The following property
of hyponormal operators is well known (see \cite[Proposition 4.4 (e)]{Con}).
\begin{property}{hypo-inv}
If $T\in\BO{\Hilb}$ is hyponormal and $\mathcal K$ is a closed
invariant subspace for $T$, then $T\rest{\mathcal K}$ is hyponormal.
\end{property}
Notions of subnormality and hyponormality originate from the work of Halmos
\cite{halmos}. We refer the reader to \cite{Con} for the fundamentals of
the theory of subnormal and hyponormal operators.

We will also study the question of when an operator $T$ is \emph{power hyponormal},
i.e.\ whether $T^n$ is hyponormal for every $n\geq 1$. It is well known that every
subnormal operator is power hyponormal but the converse in not true.
The early counterexample is due to Stampfli \cite{stamp}.

%==================================================================================
\section{Weighted shifts on directed trees}\label{R:lasy}
Weighted shifts on directed trees were introduced in \cite{szifty}. Foundations given
therein are sufficient to understand the operators studied in this paper. However,
we will use notation focused on directed forests from \cite{pikul}. In this
section we recall basic facts. The proofs can be found either in
\cite[Section~2]{pikul} or derived from the very similar arguments presented in
\cite{szifty}.

\subsection{Directed trees}
This subsection contains graph-theoretical part of the article. We recall definitions
and basic properties. While throughout this paper we will consider directed trees
exclusively, we start with a slightly more general notion of a directed forest.
\begin{Def} Pair $\Tree=(V,\parf)$ is called a~\emph{directed forest} if
$V$ is a~nonempty set and $\parf\colon V\to V$ is a function satisfying
the following condition:
\begin{property*}
if $n\geq 1$, $v\in V$ and $\parf^n(v)=v$, then $\parf(v)=v$.
\end{property*}
Elements of the set $\Troot(\Tree):=\{u\in V\colon \parf(u)=u\}$ are called
\emph{roots} of the forest $\Tree$. We call elements of the set $V$ \emph{vertices}
of the forest and $\parf$ is called the \emph{parent function}.
\end{Def}

Later in this section we assume that $\Tree=(V,\parf)$ is a directed forest.

If  $u,v\in V$ are vertices such that $\parf(v)=u\neq v$, then we call $u$ the
\emph{parent of $v$}. We also set $V^\circ:=V\setminus \Troot(\Tree)$.
%We call the elements of the set $\{\parf^k(v)\colon k\in\N\}$
%\emph{ancestors} of the vertex $v\in V$.
The elements of $\Chif(v):=\parf^{-1}(v)\setminus \{v\}$
are called \emph{children of the vertex $v$}.
In general, we define the set of \emph{$k$-th children of $v$} as
\begin{equation*}%\label{eq:child-sets}
\Chif[k](v):=\{u\in V\colon \parf^k(u)=v\neq \parf^{k-1}(u)\},\quad k\geq 1,
\end{equation*}
and $\Chif[0](v):=\{v\}$. Clearly $\Chif(v)=\Chif[1](v)$.
We also denote the set of all \emph{descendants} of a vertex $v$
by $\Des(v):=\bigcup_{n=0}^\infty \Chif[n](v)$ and use the symbol $\Deso(v)$ for
$\Des(v)\setminus\{v\}$.
The cardinality of the set $\Chif(v)$ is called \emph{the degree} of the vertex $v$
and denoted by $\deg(v)$.

If there is a risk of ambiguity we write $\Chif_\Tree(v)$, $\Des_\Tree(v)$, etc.\ 
to make the dependence on $\Tree$ explicit.

An element of $V\setminus \parf(V)$ is called a \emph{leaf}\footnote{Note that if
a root has no children (it is not a child of itself) it still
cannot be called a `leaf'.}.
If there is no leaf (i.e.\ $\parf(V)=V$) then the forest is called \emph{leafless}.

In the following lemma we list some basic facts.
Their elementary proofs are omitted.
\begin{Lem}\label{lem:tree-basics} Let $\Tree=(V,\parf)$ be a directed forest. Then
\begin{abece}
%\item\label{tbs:root} for any $k\in\N$,
%\[ \Troot(\Tree)=\{v\in V\colon \exists_{n\in\N}\, \parf^n(v)=v\}
%= \{v\in V\colon \parf^k(v)=v\},\]
%\item for $v\in V$, $\Chif(v)\cap \Troot(\Tree)=\emptyset$ and $V^\circ = \bigcup_{v\in V} \Chif(v)$,
\item\label{tbs:child-sep} $\Chif[k](v)\cap\Chif[k](w)=\emptyset$
for $k\in\ZP$ and distinct $v,w\in V$,

\item\label{tbs:kChi} $\Chif[k](v)\cap\Chif[l](v)=\emptyset$
for $k,l\in\ZP$, $v\in V$, provided that $k\neq l$,

%\item\label{tbs:distinct} if $\parf^n(v)\in V^\circ$ for some $n\in\ZP$, then
%$\parf^k(v)\neq\parf^l(v)$ provided that $0\leq k,l\leq n+1$ and $k\neq l$,

\item\label{tbs:dzieci} for any $k\in \N$,
\[\Chif[k](v)=\bigsqcup_{u\in \Chif(v)} \Chif[k-1](u)
	= \bigsqcup_{u\in \Chif[k-1](v)} \Chif(u),\]
	
%\item\label{tbs:notroots} $\displaystyle V^\circ = \bigcup_{v\in V} \Chif(v) = \bigcup_{v\in V} \Deso(v)$,

\item\label{tbs:Des-Des} if $v\in V$ and $w\in\Des(v)$,
then $\Des(w)\subseteq\Des(v)$,\vspace{.5ex}

\item\label{tbs:leafs} $v\in V$ is a leaf if and only if $v\in V^\circ$ and $\deg(v)=0$.
\end{abece}
\end{Lem}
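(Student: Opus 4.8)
The plan is to derive every item from the single defining axiom of a directed forest, which forbids nontrivial cycles: whenever $\parf^n(v)=v$ with $n\geq 1$, the vertex $v$ is already a root. Items \ref{tbs:child-sep} and \ref{tbs:leafs} need essentially no work. For \ref{tbs:child-sep} I would note that if $u\in\Chif[k](v)\cap\Chif[k](w)$ then $v=\parf^k(u)=w$, contradicting $v\neq w$; the case $k=0$ is immediate from $\Chif[0](x)=\{x\}$. For \ref{tbs:leafs} I would simply unfold definitions: $v$ is a leaf iff $\parf^{-1}(v)=\emptyset$, while $\deg(v)=0$ means $\parf^{-1}(v)\subseteq\{v\}$ and $v\in V^\circ$ means $\parf(v)\neq v$; together the latter two are equivalent to $\parf^{-1}(v)=\emptyset$.

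The first genuine appeal to the axiom occurs in \ref{tbs:kChi}. Assuming $k<l$ and $u\in\Chif[k](v)\cap\Chif[l](v)$, I would apply $\parf^{l-k}$ to $\parf^k(u)=v$ to get $\parf^{l-k}(v)=\parf^l(u)=v$; since $l-k\geq 1$ the axiom forces $\parf(v)=v$, so $v$ is a fixed point. Then $\parf^j(u)=v$ for all $j\geq k$, and in particular $\parf^{l-1}(u)=v$ (as $l-1\geq k$), contradicting the defining inequality $\parf^{l-1}(u)\neq v$. The case $k=0$ is the same: $u=v$ forces $\parf^l(v)=v$, hence a root, and the same contradiction.

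For the two-sided decomposition \ref{tbs:dzieci} I would prove each equality by a membership argument and then invoke \ref{tbs:child-sep} (with exponent $k-1$, resp.\ $1$) for the disjointness of the two unions. For the first equality, given $u\in\Chif[k](v)$ I set $w:=\parf^{k-1}(u)$ and verify $w\in\Chif(v)$ and $u\in\Chif[k-1](w)$; the only delicate point is the strict inequality $\parf^{k-2}(u)\neq w$, whose failure would make $w$ a fixed point and collapse $v=w$. The reverse inclusion and the second equality (grouping instead by $w:=\parf(u)\in\Chif[k-1](v)$) run analogously, with the case $k=1$ reducing to $\Chif[0](x)=\{x\}$.

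Finally, \ref{tbs:Des-Des} rests on the observation that $\parf^N(x)=v$ for some $N\in\ZP$ already forces $x\in\Des(v)$: choosing $N$ minimal gives either $x=v$ or $\parf^{N-1}(x)\neq v$, i.e.\ $x\in\Chif[N](v)$. Given $w\in\Des(v)$ and $x\in\Des(w)$, say $\parf^m(w)=v$ and $\parf^n(x)=w$, composition yields $\parf^{m+n}(x)=v$ and hence $x\in\Des(v)$. The only real obstacle throughout is bookkeeping---tracking the strict inequalities $\parf^{k-1}(\cdot)\neq v$ built into the definition of $\Chif[k]$ and clearing the degenerate cases $k\in\{0,1\}$ and roots---each of which is resolved by one invocation of the forest axiom.
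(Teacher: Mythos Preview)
Your argument is correct in every part; each item is handled cleanly, and the places where the directed-forest axiom is genuinely needed (items \ref{tbs:kChi} and the strict-inequality checks in \ref{tbs:dzieci}) are identified and dispatched properly. The paper itself omits the proofs as elementary, so there is nothing to compare against; your write-up is exactly the kind of routine verification the authors had in mind.
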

%Proofs of the above are elementary.

A \emph{tree} in the forest $\Tree$ is an equivalence class in $V$ with
respect to the relation
\[ v\sim w \colon \iff \exists_{m,n\in\ZP}\ \parf^n(v)=\parf^m(w). \]
In other words, a tree is a connected component of the graph
represented by $\Tree$. Some properties of trees in directed forests
can be found in \cite[Lemma~2.5]{pikul}.

%---------------------------------- DRZEWO SKIEROWANE (graf) --------
A~\emph{directed tree} is a directed forest containing only one tree.
In other words, it is a driected forest which is connected as a~graph.
The forest is called \emph{degenerate} if $V^\circ = \emptyset$
(i.e.\ $\parf=\mathrm{id}_V$). Otherwise it is non-degenerate. It may be worth
noting that a non-degenerate directed tree is leafless if and only if every
vertex has a positive degree.

\begin{Rem}\label{rem:directed-trees}
Our definition of a directed tree is consistent with the ``traditional'' one
(see e.g.\ \cite{szifty}). The corresponding set of edges can be defined as
(cf.\ \cite[(3.2.1)]{budz})
\[ E_\Tree:=\bigl\{ (u,v)\in V^2\colon u\neq v,\ u=\parf(v) \bigr\}. \]
In particular, the notions of a~root, a~child and a~$k$-th child
coincide (cf.\ Lemma \ref{lem:tree-basics}~\eqref{tbs:dzieci}).
Main differences regard the parent function (in our case defined also
on a~root) and the definition of leaves (we never call the root
a leaf). 
\end{Rem}

For every tree $T\subseteq V$ in a directed forest $\Tree=(V,\parf)$, the pair
$(T,\parf\rest{T})$ is a~directed tree and the set $\Troot\bigl((T,\parf\rest{T})\bigr)$
has at most one element. % (see Lemma~\ref{lem:tree}~(c)).
A~directed tree with a~root is called a \emph{rooted directed tree}.
We usually denote the root of a directed tree $\Tree$ by $\omega_\Tree$,
or simply $\omega$ if there is no ambiguity.
A~directed tree without a root is called \emph{rootless}.

For $W\subseteq V$ we can define a~\emph{subforest}
$\Tree\rest{W}=(W,\parf_W)$, where
\[ \parf_W(v):=\begin{cases}\parf(v) & \text{if }\parf(v)\in W\\
v& \text{if }\parf(v)\notin W\end{cases},\quad v\in W.\]
If $W=\Des(v)$ for some $v\in V$ we write $\Tree_{(v\to)}:=\Tree\rest{W}$.
\label{dfn:Des-tree}
Then $\Tree_{(v\to)}$ is a rooted directed tree and $\Troot(\Tree_{(v\to)})=\{v\}$.
In this case we will use the name \emph{subtree}.

%------------------------------ izomorfizm lasów skierowanych -------------------
For directed forests a natural notion of isomorphism can be introduced
(see \cite[Definition 2.8]{pikul}). In most cases we are not making any
distinction between isomorphic forests.
Forests are said to be disjoint if so are their sets of vertices.
We can always assume that the considered forests are disjoint by
taking appropriate isomorphic forests if necessary.

%====================================================================================
\subsection{Backward extensions of directed trees}
In the following subsection we recall operations on directed trees that are
essential for the study of backward extensions of associated weighted shift operators.

%------------------------ ROZSZERZENIE WSTECZNE ------------------------------------
Let us start with the simplest backward extension of a directed tree.

\begin{Def}\label{dfn:backward-ext}
Given a directed tree $\Tree=(V,\parf)$ with root $\omega$ and $k\geq 0$ we define
its \emph{$k$-step backward extension} as $\Tree\sqlow{k}:=\bigl(V\sqlow{k}, \parf\sqlow{k} \bigr)$,
where $V\sqlow{k}:=V\sqcup\{\omega_j\}_{j=1}^k$, $\omega_j$'s are new distinct vertices,
$\omega_0:=\omega$ and 
\[ \parf\sqlow{k}(v):=\begin{cases}\parf(v)&\text{if }v\in V^\circ\\
\omega_{j}&\text{if }v=\omega_{j-1},\ j=1,\ldots,k\\\omega_k&\text{if }v=\omega_k\end{cases}
,\quad v\in V\sqlow{k}.\]
\end{Def}

\begin{figure}[ht]
\includegraphics[scale=1.1]{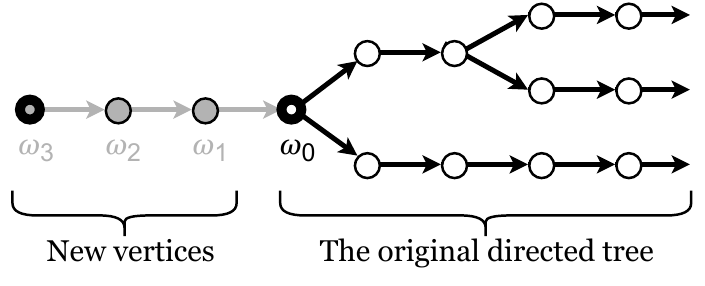}
\caption[$3$-step backward extension of a directed tree]%
{$3$-step backward extension of a directed tree.}
\label{fig:back-ext}
\end{figure}

We will call any directed tree isomorphic to the one constructed above
a $k$-step backward extension of $\Tree$. 
Note also that $\Tree\sqlow{0}=\Tree$.

%------------------------------ SUMA UKORZENIONA DRZEW ------------------------
Another important operation on rooted directed trees is joining them by adding
a new root as a parent for their original roots. Below you can find the
formal definition. See also Figure~\ref{fig:rooted-sum}.

\begin{Def}\label{dfn:rooted-sum}
The \emph{rooted sum} of a~family $\{\Tree_j=(V_j,\parf_j)\colon j\in J\}$ of
pairwise disjoint rooted directed trees is the directed tree defined by
\[\rootsum_{j\in J}\Tree_j := \bigl(V_J, \parf_J\bigr), \]
where $\displaystyle
V_J:=\{\omega\}\sqcup\bigsqcup_{j\in J}V_j$ and
\[
\displaystyle \parf_J(v):=\begin{cases}\parf_j(v) & \text{if }v\in V^\circ_j\\
\omega &\text{otherwise}\end{cases},\quad v\in V_J.\]
%We assume $\omega\notin \bigcup_{j\in J}V_j$.
\end{Def}
\begin{figure}[hbt]
\includegraphics[scale=.8]{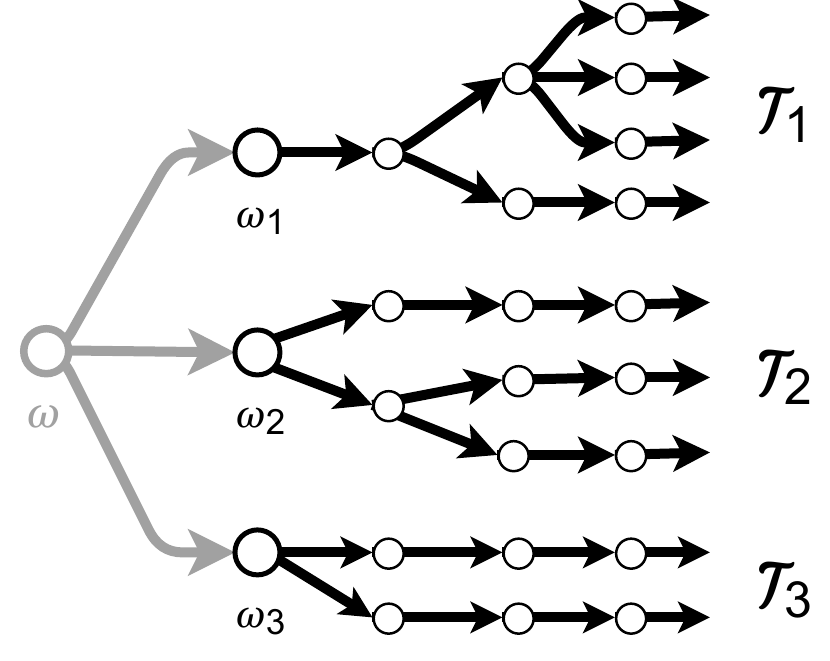}
\caption[Rooted sum of directed trees]%
{The rooted sum of the family $\{\Tree_1,\Tree_2,\Tree_3\}$.}
\label{fig:rooted-sum}
\end{figure}

Note that if the given family is empty, its rooted sum contains only
one vertex and is a degenerate directed forest (tree).
Similarly as before, we will call any directed tree isomorphic to
$\rootsum_{j\in J}\Tree_j$ a rooted sum of the family $\{\Tree_j\}_{j\in J}$.
In particular 
\begin{property}{child-sum}
for any directed forest $\Tree$ and its vertex $v$, %\in V$,
the directed tree $\Tree_{(v\to)}$ is a rooted sum of the family
$\{\Tree_{(u\to)}\colon u\in\Chif(v)\}$.
\end{property}

%==================================================================================
\subsection{Weighted shift operators} %---------------------------- SZIFTY -------
\label{R:szifty}
This subsection recalls the weighted shift operators on directed forests as
defined is \cite[Section~2]{pikul}. Most of fundamental facts is analogous
to the case of weighted shifts on directed trees described in \cite{szifty}.

Given a~set $V$ we consider the Hilbert space $\eld{V}$ of all square-summable
complex functions on $V$ with the standard inner product
\[ \scp{f,g}:=\sum_{v\in V} f(v)\overline{g(v)},\quad f,g\in\eld{V}.\]
By $\{e_v:v\in V\}$ we denote the canonical orthonormal basis of $\eld{V}$
consisting of functions such that $e_v\rest{V\setminus\{v\}}\equiv 0$ and $e_v(v)=1$.

For a set $W\subseteq V$, we will identify $\eld{W}$ with a closed subspace of
$\eld{V}$ obtained by taking extensions of elements from $\eld{W}$ by $0$ on
$V\setminus W$. In particular $\eld{\emptyset}=\{ 0 \}$.
With this convention we have $\eld{W_1} \perp \eld{W_2}$, provided that
$W_1$ and $W_2$ are disjoint subsets of $V$.
In most cases in this paper $V$ will be the set of vertices of a directed tree.

\begin{Def}[Weighted shift]\label{dfn:shift-op}
For a~directed forest $\Tree=(V,\parf)$ and a~set of complex weights
$\blambda=\{\lambda_v\}_{v\in V}$ such that $\lambda_\omega=0$ for
$\omega\in \Troot(\Tree)$,
we define the operator $S_\blambda$ in $\eld{V}$, called the 
\emph{weighted shift on $\Tree$ with weights $\blambda$}, as follows:
\[ \dom(S_\blambda):=\{f\in \ell^2(V)\colon \varGamma_\blambda(f) \in \eld{V}\}, \]
\[ S_\blambda (f):= \varGamma_\blambda(f),\quad f\in \dom(S_\blambda),\]
where $\varGamma_\blambda \colon\C^V\to \C^V$ is defined by
$\varGamma_\blambda(f)(v):= \lambda_v f(\parf(v))$.

We say a weighted shift $S_\blambda$ is \emph{proper} if
$\{v\in V\colon\lambda_v=0\}= \Troot(\Tree)$.
\end{Def}
The above definition of a weighted shift operator is almost the same as
\cite[Definition 3.1.1]{szifty}. For a directed tree both definitions of
$S_\blambda$ agree up to the requirement of specifying the zero weight
for the root.
Later in this paper the statement ``$\blambda$ is a system of weights on $\Tree$''
includes the assumption that $\lambda_\omega=0$ for $\omega\in\Troot(\Tree)$.

A weight can be understood as being assigned to an edge joining a vertex with
its parent. As stated in the equality \eqref{eq:naBazowym}, the operator
$S_\blambda$ \emph{shifts} a value at given vertex through all outgoing edges.

In view of Definition~\ref{dfn:shift-op}, a classical unilateral weighted shift
on $\eld{\ZP}$ is a weighted shift on the directed tree $\Tree=(\ZP,\parf)$,
where $\parf(0)=0$ and $\parf(n+1)=n$ for $n\geq 0$.
For a bilateral weighted shift (acting on $\eld{\mathbb Z}$) we use the tree
$\Tree=(\mathbb Z,\parf)$ with $\parf(n)=n-1$ for $n\in\mathbb Z$.

Proofs of the following simple facts does not differ from the case of directed
trees described in \cite{szifty} (cf.\ Propositions 3.1.3 and 3.1.8 therein).

%------------------------------ gęsta określność, ograniczoność itd. --------
\begin{Pro}\label{pro:shift-basic}
Let $S_\blambda$ be a weighted shift on a directed forest $\Tree=(V,\parf)$. Then
\begin{abece}
\item if $v\in V$ and $e_v\in\dom(S_\blambda)$, then
\begin{equation}\label{eq:naBazowym}
S_\blambda e_v = \sum_{u\in\Chif(v)} \lambda_u e_u,
\end{equation}
%..........................

\item\label{shbs:bounded}  $S_\blambda$ is a bounded operator on $\eld{V}$ if and only if
\[ \sup\biggl\{\sum_{u\in\Chif(v)} |\lambda_u|^2 \colon v\in V \biggr\} < \infty; \]
moreover, if $S_\blambda \in \BO{\eld{V}}$, then
\[\|S_\blambda\|^2=\sup\bigl\{\|S_\blambda e_v\|^2\colon v\in V\bigr\}
=\sup\biggl\{ \sum_{u\in\Chif(v)} |\lambda_u|^2 \colon v\in V \biggr\},\]
%....................
\item\label{shbs:below}  $S_\blambda$ is bounded from below on $\dom(S_\blambda)$ %$\eld{V}$
if and only if
\[ \inf\biggl\{\sum_{u\in\Chif(v)} |\lambda_u|^2 \colon v\in V \biggr\} > 0, \]
%....................
\item\label{shbs:countable} if $S_\blambda$ is proper and densely defined,
then $\Tree$ is locally countable $($i.e.\ every tree in $\Tree$ is countable$)$,
%...................
\item\label{shbs:Des-inv} if $\eld{\Des(v)}\subseteq \dom(S_\blambda)$
for some $v\in V$, then \[ S_\blambda(\eld{\Des(v)})\subseteq \eld{\Des(v)}. \]
\end{abece}
\end{Pro}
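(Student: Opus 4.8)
The plan is to prove each of the five claims in Proposition~\ref{pro:shift-basic} by reducing everything to the action of $S_\blambda$ on basis vectors and to the convention that $\eld{W}$ is identified with a closed subspace of $\eld{V}$. These are all standard facts whose analogues for directed trees appear in \cite{szifty}; the only real work is checking that nothing in the forest setting (several roots, rootless trees) breaks the arguments.

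For part~(a), I would compute $\varGamma_\blambda(e_v)$ pointwise: by definition $\varGamma_\blambda(e_v)(u)=\lambda_u e_v(\parf(u))$, which is nonzero precisely when $\parf(u)=v$ and $u\neq v$, i.e.\ when $u\in\Chif(v)=\parf^{-1}(v)\setminus\{v\}$. (One must note the edge case $\parf(u)=v=u$, which is excluded from $\Chif(v)$ and contributes nothing since it would force $u=v$, giving the root term that the weight convention $\lambda_\omega=0$ kills anyway.) Summing over such $u$ gives \eqref{eq:naBazowym}. For parts~\eqref{shbs:bounded} and \eqref{shbs:below}, the key observation is that the subspaces $\eld{\Chif(v)}$ for distinct $v$ are mutually orthogonal by Lemma~\ref{lem:tree-basics}~\eqref{tbs:child-sep}, and $S_\blambda e_v$ lands in $\eld{\Chif(v)}$ with $\|S_\blambda e_v\|^2=\sum_{u\in\Chif(v)}|\lambda_u|^2$. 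Writing a general $f=\sum_v f(v)e_v$ and using this orthogonal decomposition of the range, one gets $\|S_\blambda f\|^2=\sum_v |f(v)|^2\sum_{u\in\Chif(v)}|\lambda_u|^2$, from which the supremum characterises boundedness and the norm, and the infimum characterises bounded-below-ness on the domain; standard care is needed to argue that $\dom(S_\blambda)$ is all of $\eld{V}$ exactly when the supremum is finite.

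Part~\eqref{shbs:countable} is the one combinatorial step and probably the least mechanical: properness means $\lambda_u\neq 0$ for every $u\in V^\circ$, and dense definedness forces each $e_v\in\dom(S_\blambda)$. I would argue that local countability follows because a densely defined proper shift forces each vertex to have at most countable degree (otherwise $S_\blambda e_v$ would have uncountably many nonzero orthogonal components, contradicting square-summability), and then a tree, being generated from a root by iterating the countable-branching child relation through countably many levels $\Chif[k]$, is a countable union of countable sets via Lemma~\ref{lem:tree-basics}~\eqref{tbs:dzieci}. For a rootless tree one instead exhausts it along the backward $\parf$-orbit together with forward descendants. Part~\eqref{shbs:Des-inv} is then immediate from~(a): for $v\in V$ and any $w\in\Des(v)$ we have $S_\blambda e_w=\sum_{u\in\Chif(w)}\lambda_u e_u$, and $\Chif(w)\subseteq\Des(w)\subseteq\Des(v)$ by Lemma~\ref{lem:tree-basics}~\eqref{tbs:Des-Des}, so $S_\blambda e_w\in\eld{\Des(v)}$; since $\{e_w:w\in\Des(v)\}$ spans a dense subset of $\eld{\Des(v)}$ and $S_\blambda\rest{\eld{\Des(v)}}$ is bounded (being a restriction of the bounded operator once the domain hypothesis is in force), invariance passes to the closure.

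The main obstacle I anticipate is not any single computation but the bookkeeping around the domain and the degenerate configurations: ensuring the root convention $\lambda_\omega=0$ and the definition $\Chif(v)=\parf^{-1}(v)\setminus\{v\}$ interact correctly so that self-loops at roots never spuriously contribute, and handling rootless trees in~\eqref{shbs:countable} where there is no distinguished starting vertex. Everything else is a direct transcription of the basis-action formula \eqref{eq:naBazowym} together with the orthogonality supplied by Lemma~\ref{lem:tree-basics}.
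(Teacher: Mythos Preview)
Your sketch is correct and follows the standard route; the paper itself does not supply a proof but simply defers to \cite[Propositions~3.1.3 and 3.1.8]{szifty}, where the directed-tree case is handled by exactly the computations you outline.

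Two small points deserve tightening. In part~(d), the step ``dense definedness forces each $e_v\in\dom(S_\blambda)$'' is true but not automatic: from the identity $\|\varGamma_\blambda(f)\|^2=\sum_{w\in V}|f(w)|^2\sum_{u\in\Chif(w)}|\lambda_u|^2$ (valid in $[0,\infty]$ for every $f\in\eld{V}$), if $\sum_{u\in\Chif(v)}|\lambda_u|^2=\infty$ then every $f\in\dom(S_\blambda)$ must satisfy $f(v)=0$, whence $\dom(S_\blambda)\perp e_v$, contradicting density. In part~(e), your phrase ``being a restriction of the bounded operator once the domain hypothesis is in force'' is not justified as written, since no global boundedness of $S_\blambda$ is assumed there; to make your closure argument work you would need closedness of $S_\blambda$ together with the closed graph theorem. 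A cleaner route avoids closure entirely: for arbitrary $f\in\eld{\Des(v)}$, if $u\notin\Des(v)$ then also $\parf(u)\notin\Des(v)$ (otherwise $u\in\Des(\parf(u))\subseteq\Des(v)$ by Lemma~\ref{lem:tree-basics}~\eqref{tbs:Des-Des}), so $(\varGamma_\blambda f)(u)=\lambda_u f(\parf(u))=0$ directly.
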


%===================== POTĘGA PRZESUNIĘCIA =====================================

To describe powers of a~weighted shift we introduce the following notation
\[\lambda^{(k)}_v:=\begin{cases} 1 & 	 $if $ k=0\\ 
\prod_{j=0}^{k-1} \lambda_{\parf^j(v)} & $if $ k\geq 1\end{cases},
\qquad\ v\in V.\]
Then the following fact holds (cf.\ \cite[Lemma~3.6]{pikul}).
\begin{Lem}\label{lem:powers} Suppose that $S_\blambda$ is a bounded
weighted shift on a directed forest $\Tree=(V,\parf)$. Let $k\in\N$.
Then for $f\in\eld{V}$,
\begin{eqnarray}\setlength\arraycolsep{0pt}
\label{eq:Sdok} S_\blambda^k e_v &=&
%\sum_{v\in V} f(v)
\sum_{u\in\Chif[k](v)}\lambda^{(k)}_u e_u,\\
\label{eq:S*dok} S_\blambda^{\ast k} e_v &=&
%\sum_{v\in V} f(v)
\overline{\lambda^{(k)}_v} e_{\parf^k(v)}.
%=\sum_{v\in V} \Bigl(\sum_{u\in\Chif[k](v)} f(u)\overline{\lambda^{(k)}_u}\Bigr) e_v.
\end{eqnarray}
\end{Lem}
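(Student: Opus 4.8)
The plan is to prove both formulas \eqref{eq:Sdok} and \eqref{eq:S*dok} by induction on $k$, relying on the base case $k=1$ given by \eqref{eq:naBazowym} together with $S_\blambda^\ast e_v = \sum_{\{u:\parf(u)=v\}} \overline{\lambda_v} e_{\parf(v)}$, which follows from computing $\scp{S_\blambda e_w, e_v}$. First I would establish the adjoint action in the base case directly: since $\scp{S_\blambda e_w, e_v} = \lambda_v$ exactly when $v\in\Chif(w)$, i.e.\ when $w=\parf(v)$ and $v\in V^\circ$, we get $S_\blambda^\ast e_v = \overline{\lambda_v}\, e_{\parf(v)}$ for $v\in V^\circ$ (and $S_\blambda^\ast e_v=0$ for $v\in\Troot(\Tree)$, consistent with $\lambda_v^{(1)}=\lambda_v=0$ there).

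For the inductive step on \eqref{eq:Sdok}, I would write $S_\blambda^{k+1} e_v = S_\blambda(S_\blambda^k e_v)$ and apply the inductive hypothesis to get $\sum_{u\in\Chif[k](v)} \lambda_u^{(k)} S_\blambda e_u$. Using \eqref{eq:naBazowym} on each $e_u$ turns this into a double sum $\sum_{u\in\Chif[k](v)}\sum_{w\in\Chif(u)} \lambda_u^{(k)}\lambda_w e_w$. The key combinatorial fact is Lemma~\ref{lem:tree-basics}\eqref{tbs:dzieci}, which identifies $\Chif[k+1](v)$ with the disjoint union $\bigsqcup_{u\in\Chif[k](v)}\Chif(u)$, so reindexing by $w\in\Chif[k+1](v)$ collapses the double sum into a single sum over $w$. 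It then remains to check that for $w\in\Chif(u)$ with $u\in\Chif[k](v)$ one has $\lambda_u^{(k)}\lambda_w = \lambda_w^{(k+1)}$; this is immediate from the definition of $\lambda^{(k)}$ since $u=\parf(w)$ forces $\parf^j(u)=\parf^{j+1}(w)$, so $\lambda_u^{(k)} = \prod_{j=0}^{k-1}\lambda_{\parf^{j+1}(w)}$ and multiplying by $\lambda_w = \lambda_{\parf^0(w)}$ gives the product $\prod_{j=0}^{k}\lambda_{\parf^j(w)} = \lambda_w^{(k+1)}$.

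For \eqref{eq:S*dok} I would argue analogously, computing $S_\blambda^{\ast(k+1)} e_v = S_\blambda^\ast(S_\blambda^{\ast k} e_v) = \overline{\lambda_v^{(k)}}\, S_\blambda^\ast e_{\parf^k(v)}$ by the inductive hypothesis, and then applying the base-case adjoint formula to $e_{\parf^k(v)}$ to obtain $\overline{\lambda_v^{(k)}}\,\overline{\lambda_{\parf^k(v)}}\, e_{\parf^{k+1}(v)}$. The scalar simplifies since $\lambda_v^{(k)}\lambda_{\parf^k(v)} = \bigl(\prod_{j=0}^{k-1}\lambda_{\parf^j(v)}\bigr)\lambda_{\parf^k(v)} = \lambda_v^{(k+1)}$, giving the desired expression. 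Alternatively, since $S_\blambda$ is bounded and \eqref{eq:Sdok} is already established, \eqref{eq:S*dok} can be recovered purely formally by taking adjoints: for any $v$, the vector $S_\blambda^{\ast k} e_v$ is determined by its inner products $\scp{S_\blambda^{\ast k} e_v, e_u} = \scp{e_v, S_\blambda^k e_u} = \overline{\lambda_v^{(k)}}\,\delta_{v\in\Chif[k](u)}$, and $v\in\Chif[k](u)$ holds for exactly one $u$, namely $u=\parf^k(v)$ (when $v$ has depth at least $k$), by Lemma~\ref{lem:tree-basics}\eqref{tbs:child-sep}.

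The main obstacle, if any, is purely bookkeeping: ensuring the reindexing via Lemma~\ref{lem:tree-basics}\eqref{tbs:dzieci} is legitimate termwise even though the sums may be infinite, which is justified by boundedness of $S_\blambda$ (so all the relevant series converge in $\eld{V}$) and by the disjointness of the unions $\Chif[k+1](v)=\bigsqcup_{u\in\Chif[k](v)}\Chif(u)$, which guarantees each $w$ is counted exactly once. I expect the proof to be short, with the only genuine content being the identity $\lambda_u^{(k)}\lambda_w=\lambda_w^{(k+1)}$ for the weight products and the tree-combinatorics lemma to relabel the index sets.
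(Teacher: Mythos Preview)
Your proposal is correct and follows the standard induction argument; the paper itself does not give a proof but merely cites \cite[Lemma~3.6]{pikul}, and your argument is precisely the natural one that underlies that reference. The only blemish is the garbled display in your first sentence (the expression $S_\blambda^\ast e_v = \sum_{\{u:\parf(u)=v\}} \overline{\lambda_v} e_{\parf(v)}$ is not what you mean), but you immediately give the correct computation $S_\blambda^\ast e_v = \overline{\lambda_v}\, e_{\parf(v)}$, so this is cosmetic.
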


A benefit from considering weighted shifts on directed forests is that
this class is closed on taking powers of operator. In fact, the operator
$S_\blambda^k$ is the weighted shift on the $k$-th power $\Tree^k$ of the
forest $\Tree$ with weights $\blambda^{(k)}=\{\lambda_v^{(k)}\}_{v\in V}$.
The reader is referred to \cite[Definition~2.12]{pikul} for the notion of
$k$-th power of a directed forest.
%However, it will not be used in further sections of this article.
According to \cite[Lemma~2.14~(c)]{pikul}, if $S_\blambda$ is a~weighted shift
on an infinite directed tree, $S_\blambda^k$ is a~weighted shift on a~directed
forest consisting of at least $k$ trees. In particular, for $k\geq 2$, it is
no longer a~weighted shift on a directed tree.

As a consequence of the formula \eqref{eq:Sdok} and
Lemma~\ref{lem:tree-basics}~(\ref{tbs:child-sep}) we obtain the following useful property.
\begin{Cor}\label{cor:szift-ort}
Let $S_\blambda$ be a bounded weighted shift on a directed forest $\Tree=(V,\parf)$,
$k\in\ZP$ and $\sum_{v\in V} a_v e_v $ be a vector in $\eld{V}$. Then
\begin{equation}\label{eq:normasumy}
\Bigl\| S_\blambda^k \sum_{v\in V} a_v e_v \Bigr\|^2 = \sum_{v\in V} |a_v|^2 \|S_\blambda^k e_v\|^2.
\end{equation}
\end{Cor}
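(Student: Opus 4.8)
The plan is to reduce the claimed identity to the orthogonality facts already established, so that the Pythagorean theorem applies directly. First I would apply formula~\eqref{eq:Sdok} from Lemma~\ref{lem:powers} to each basis vector, writing
\[ S_\blambda^k \sum_{v\in V} a_v e_v = \sum_{v\in V} a_v S_\blambda^k e_v = \sum_{v\in V} a_v \sum_{u\in\Chif[k](v)} \lambda^{(k)}_u e_u. \]
The boundedness of $S_\blambda$ justifies interchanging $S_\blambda^k$ with the (possibly infinite) sum, since $S_\blambda^k$ is continuous. The key structural observation is that the family of index sets $\{\Chif[k](v)\colon v\in V\}$ is pairwise disjoint by Lemma~\ref{lem:tree-basics}~\eqref{tbs:child-sep}; consequently the vectors $S_\blambda^k e_v$, as $v$ ranges over $V$, live in mutually orthogonal subspaces $\eld{\Chif[k](v)}$ of $\eld{V}$.

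Once this orthogonality is in hand, the computation of the norm is immediate from the Pythagorean theorem: distinct terms $a_v S_\blambda^k e_v$ are pairwise orthogonal, so
\[ \Bigl\| \sum_{v\in V} a_v S_\blambda^k e_v \Bigr\|^2 = \sum_{v\in V} \|a_v S_\blambda^k e_v\|^2 = \sum_{v\in V} |a_v|^2 \|S_\blambda^k e_v\|^2, \]
which is exactly the right-hand side of~\eqref{eq:normasumy}. I would also cover the degenerate case $k=0$ separately (or simply note it), where $S_\blambda^0 = \mathrm{id}$ and $\Chif[0](v)=\{v\}$, so the identity reduces to the standard Parseval equality $\|\sum_v a_v e_v\|^2 = \sum_v |a_v|^2$.

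The only genuine subtlety — and the step I would treat most carefully — is the justification of term-by-term orthogonality for an \emph{infinite} index set, since $\eld{V}$ may be nonseparable and the sum $\sum_{v\in V} a_v e_v$ is an unconditionally convergent series rather than a finite combination. The clean way around this is to invoke the general principle that if $\{H_v\}_{v\in V}$ is a family of pairwise orthogonal closed subspaces and $x_v\in H_v$ with $\sum_v x_v$ convergent, then $\|\sum_v x_v\|^2 = \sum_v \|x_v\|^2$; here $H_v := \eld{\Chif[k](v)}$ and $x_v := a_v S_\blambda^k e_v \in H_v$, with pairwise orthogonality guaranteed by the disjointness of the $\Chif[k](v)$. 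Boundedness of $S_\blambda^k$ ensures the image series converges, so the hypotheses of this principle are met and the identity follows with no further estimates.
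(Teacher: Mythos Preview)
Your proof is correct and follows exactly the approach the paper indicates: the paper presents this corollary without a separate proof, stating only that it is ``a consequence of the formula~\eqref{eq:Sdok} and Lemma~\ref{lem:tree-basics}~(\ref{tbs:child-sep}),'' which are precisely the two ingredients you invoke. Your write-up simply fills in the details (orthogonality of the subspaces $\eld{\Chif[k](v)}$ and the Pythagorean identity for the possibly infinite sum) that the paper leaves implicit.
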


Similarly as for the classical weighted shifts (see
\cite[Corollary 1,\ p.\,52]{shields}), the study of weighted shifts
on directed trees can be restricted to the case when all the weights
are non-negative (see \cite[Theorem~3.2.1]{szifty}, \cite[Theorem~3.7]{pikul}).
In view of \cite[Proposition~3.2]{pikul}, considering directed forests allows
us to assume that all weights (except those associated to the roots)
are strictly positive (cf.\ \cite[Proposition~3.1.6]{szifty}).

\section{Joint backward extension property} \label{sec:JBEP}
\subsection{Introduction to backward extensions} %\label{sec:back-ext}
 %-------------------------------------------------------------- 
Given a directed tree $\Tree=(V,\parf)$ and complex numbers
$\{\lambda_v\}_{v\in \Deso(u)}$ for some $u\in V$, we will use the following notation
\begin{equation}\label{eq:destree}
\blambda^\to_u:=\{\tilde\lambda_v\}_{v\in\Des(u)},\quad
\tilde\lambda_v:=\begin{cases}\lambda_v &\text{if }v\neq u\\ 0& \text{if }v= u\end{cases},\quad v\in \Des(u).
\end{equation}

The so defined $\blambda^\to_u$ is a system of weights on the rooted directed
tree $\Tree_{(u\to)}$ (see page \pageref{dfn:Des-tree}). If $S_\blambda$ is a
bounded weighted shift on $\Tree$, then $S_{\blambda^\to_u}$, the weighted shift
on $\Tree_{(u\to)}$, is equal to the restriction $S_\blambda\rest{\eld{\Des(u)}}$
of $S_\blambda$ to the invariant subspace $\eld{\Des(u)}$
(see Proposition~\ref{pro:shift-basic}~\eqref{shbs:Des-inv}).

We will consider whether weighted shifts on rooted directed trees can be extended
to shifts on larger trees (for which the former root has a parent) with the same
properties (e.g.\ subnormality). The basic case of such extension is the subject of
the following definition.
\newcommand{\Cls}{\mathscr{C}} %--- oznaczenie na klasę operatorów

Let $\Cls$ be a class of operators, e.g.\ subnormal. We will focus on classes closed
with respect to restrictions to closed invariant subspaces.

\begin{Def}\label{dfn:backward-ext-shift}
Let $S_\blambda\in \BO{\eld{V}}$ be a weighted shift on a rooted directed tree $\Tree$.
Given $k\in\N$, we say $S_\blambda$ \emph{admits a $\Cls$ $k$-step backward extension}
if it belongs to the class $\Cls$ and there exist non-zero weights
$\lambda_{\omega_0}',\ldots,\lambda_{\omega_{k-1}}'$ such that the weighted shift
$S_{\blambda '}$ on $\Tree\sqlow{k}$ (see Definition~\ref{dfn:backward-ext})
is bounded and belongs to $\Cls$, where $\blambda '=\{\lambda_v'\}_{v\in V\sqlow{k}}$,
$\lambda'_{\omega_k}=0$ and $\lambda_v'=\lambda_v$ whenever $v\in V^\circ$.
The weighted shift $S_{\blambda'}$ is then called a $\Cls$ $k$-step backward extension
of $S_\blambda$.
We say that $S_\blambda$ \emph{admits a $\Cls$ $0$-step backward extension} 
if $S_\blambda$ belongs to $\Cls$.
%$S_\blambda$ is then a $\Cls$ $0$-step backward extension of itself.

In the case of a concrete class of operators we will replace ``$\Cls$'' in the above
definition by the name of members of the class, e.g.\ we will write that
a weighted shift \emph{admits a subnormal $k$-step backward extension}.
\end{Def}

Assuming the class $\Cls$ is closed on taking restrictions to invariant subspaces
(like in \eqref{eq:hypo-inv})
implies that every weighted shift on a directed tree admitting $\Cls$ $k$-step backward
extension admits also $\Cls$ $(k-1)$-step backward extension ($k\geq 1$).
Indeed, $\eld{V\sqlow{k-1}}$ is an invariant subspace for a weighted shift on
$\Tree\sqlow{k}$ (cf.\ Proposition~\ref{pro:shift-basic}~(\ref{shbs:Des-inv})).

It is worth mentioning that if a weighted shift on a directed tree admits a
subnormal (resp.\ power hyponormal) $k$-step backward extension, then any its
nonzero scalar multiple does. It is not the case in the class of completely
hyperexpansive operators (see Section~\ref{sec:che1}), because due to
$A_1(T)\leq 0$ any contractive completely hyperexpansive operator is
automatically an isometry.

\begin{Pro}[{\cite[Proposition 8.1.3.]{szifty}}] \label{pro:isometry}
Let $\Tree$ be a countable rooted and leafless directed tree. Then
there exists $\blambda$, a system of weights on $\Tree$, such that $S_\blambda$
is proper and isometric.
\end{Pro}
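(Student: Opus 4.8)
The goal is to construct, on an arbitrary countable rooted leafless directed tree $\Tree=(V,\parf)$ with root $\omega$, a weighting $\blambda$ making $S_\blambda$ both proper and isometric. The plan is to translate the isometry condition into a pointwise equation on the weights and then solve it recursively down the tree. For a weighted shift, isometry means $\|S_\blambda f\|=\|f\|$ for all $f$, and by Corollary~\ref{cor:szift-ort} (or directly from \eqref{eq:naBazowym}) it suffices to have $\|S_\blambda e_v\|=1$ for every $v\in V$, i.e.
\[
\sum_{u\in\Chif(v)}|\lambda_u|^2=1\qquad\text{for every }v\in V,
\]
since the images $S_\blambda e_v$ are mutually orthogonal (their supports $\Chif(v)$ are pairwise disjoint by Lemma~\ref{lem:tree-basics}~\eqref{tbs:child-sep}). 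I would first record this reduction: an isometry is exactly a weighting whose squared weights sum to $1$ over each sibling-set.

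Next I would build such a weighting by choosing, for each non-leaf vertex $v$, a probability-like distribution of mass $1$ across its children $\Chif(v)$ with all weights strictly positive. Because the tree is leafless and rooted, every vertex has positive degree, so $\Chif(v)\neq\emptyset$ for all $v$; and because the tree is countable, each $\Chif(v)$ is at most countable, so one can always pick strictly positive numbers $\{|\lambda_u|^2\}_{u\in\Chif(v)}$ summing to $1$ (e.g.\ an enumeration with weights $2^{-n}$ suitably normalised, or uniform weights when the degree is finite). Setting $\lambda_\omega=0$ as required for the root and $\lambda_u>0$ for every $u\in V^\circ$ makes $S_\blambda$ proper by definition. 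The uniform bound needed for boundedness in Proposition~\ref{pro:shift-basic}~\eqref{shbs:bounded} is automatic here, since each sibling-sum equals $1$, giving $\|S_\blambda\|=1$.

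Finally I would verify that the constructed $S_\blambda$ is genuinely isometric, not merely norm-one on basis vectors. Expanding an arbitrary $f=\sum_{v\in V}a_v e_v$ and invoking \eqref{eq:normasumy} with $k=1$ gives
\[
\Bigl\|S_\blambda \sum_{v\in V}a_v e_v\Bigr\|^2
=\sum_{v\in V}|a_v|^2\,\|S_\blambda e_v\|^2
=\sum_{v\in V}|a_v|^2=\|f\|^2,
\]
so $S_\blambda$ preserves the norm on all of $\eld{V}$, confirming isometry. I do not expect any serious obstacle: the only point requiring mild care is the existence, for each vertex, of strictly positive weights summing to $1$ over a possibly countably infinite child-set, which is guaranteed precisely by the standing countability assumption. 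The leafless hypothesis is what prevents an empty child-set (which would force a vertex $v$ with $S_\blambda e_v=0$, destroying isometry), and the countability hypothesis is what keeps every $\Chif(v)$ summable; both assumptions are therefore used exactly once and essentially.
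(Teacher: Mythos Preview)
Your proof is correct and follows exactly the paper's approach: choose nonzero weights so that $\sum_{u\in\Chif(v)}|\lambda_u|^2=1$ for every vertex $v$, which the paper states in a single sentence. You have simply filled in the details the paper omits (existence of strictly positive summable weights via countability, boundedness via Proposition~\ref{pro:shift-basic}~\eqref{shbs:bounded}, and the passage from $\|S_\blambda e_v\|=1$ to full isometry via \eqref{eq:normasumy}).
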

\begin{proof} As usual, we denote the tree by $\Tree=(V,\parf)$ and its root by $\omega$.
To construct an isometry it suffices to take nonzero weights
$\{\lambda_u\}_{u\in\Chif(v)}$ such that $\sum_{u\in\Chif(v)}|\lambda_u|^2 = 1$
for each $v\in V$.
\end{proof}

For several classes of weighted shifts the property of admitting $k$-step backward
extensions is always satisfied.
\begin{Pro}\label{pro:trivial-back-ext}
Let $\Cls$ be one of the following classes of operators
(in all cases we assume boundedness)
\begin{itemize}
\item all bounded,
\item contractive,
\item bounded from below,
\item expansive,
\item isometric,
\item scalar multiples of isometries,
\item non-zero hyponormal.
\end{itemize}
Then any weighted shift in the class $\Cls$ admits a $\Cls$ $k$-step backward
extension for arbitrary $k\in\N$.
\end{Pro}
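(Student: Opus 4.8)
The plan is to prove the statement by, for each listed class $\Cls$, constructing explicit backward-extension weights $\lambda'_{\omega_0},\ldots,\lambda'_{\omega_{k-1}}$ that keep the extended shift $S_{\blambda'}$ inside $\Cls$. The natural strategy is reduction to the single-step case: since each class here is closed under restriction to the invariant subspaces $\eld{V\sqlow{j}}$ (cf.\ Proposition~\ref{pro:shift-basic}~\eqref{shbs:Des-inv}), it suffices to understand how one new vertex, playing the role of a new root above the old root $\omega$, affects membership. In fact $\Tree\sqlow{k}$ is built by prepending a finite linear chain $\omega_k\to\omega_{k-1}\to\cdots\to\omega_0=\omega$, and the action of $S_{\blambda'}$ on the basis vectors $e_{\omega_j}$ is, by Proposition~\ref{pro:shift-basic}~(a), simply $S_{\blambda'}e_{\omega_j}=\lambda'_{\omega_{j-1}}e_{\omega_{j-1}}$ for $j=1,\ldots,k$, so the chain behaves exactly like a classical unilateral weighted shift feeding into the original tree.

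Next I would treat the classes in groups according to which scalar invariant controls membership. For \emph{all bounded} shifts the claim is immediate: by Proposition~\ref{pro:shift-basic}~\eqref{shbs:bounded} boundedness is governed by $\sup_v\sum_{u\in\Chif(v)}|\lambda_u|^2$, so choosing each new weight with modulus at most this supremum (e.g.\ equal to $\|S_\blambda\|$) keeps the extension bounded. For \emph{contractive} ($\|S_{\blambda'}\|\le 1$) and \emph{expansive}/\emph{bounded from below} shifts the same norm formula and its lower-bound analogue in Proposition~\ref{pro:shift-basic}~\eqref{shbs:below} reduce everything to a per-vertex condition on $\sum_{u\in\Chif(\omega_j)}|\lambda'_u|^2$; since each new vertex $\omega_j$ has exactly one child $\omega_{j-1}$, this is a single scalar $|\lambda'_{\omega_{j-1}}|^2$, which I set equal to $1$ for isometries and contractions, or to any admissible value within the relevant bound otherwise. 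For \emph{isometric} shifts I invoke the construction idea behind Proposition~\ref{pro:isometry}: an isometric weighted shift is exactly one where $\sum_{u\in\Chif(v)}|\lambda'_u|^2=1$ at every vertex, so taking $|\lambda'_{\omega_{j-1}}|=1$ preserves isometry; \emph{scalar multiples of isometries} follow by first dividing out the scalar, extending as an isometry, and rescaling.

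The only genuinely non-routine case is \emph{non-zero hyponormal}, and I expect it to be the main obstacle, so I would handle it last and most carefully. Here I cannot merely pick weights of the right modulus; I must verify the operator inequality $[S_{\blambda'}^*,S_{\blambda'}]\ge 0$ after prepending the chain. The key observation is that for weighted shifts on directed trees hyponormality admits a local, vertex-by-vertex characterisation (the Cauchy--Schwarz type inequality relating a weight at a vertex to the weights of its children), and the new vertices $\omega_j$ each have a single child, which makes the constraint at $\omega_j$ the mildest possible. Concretely I would check that choosing the chain weights to be nondecreasing as one moves toward the root — for instance taking each $|\lambda'_{\omega_{j-1}}|$ large enough relative to the incoming edge below it — forces the local hyponormality inequality at every $\omega_j$, while the inequalities at the old vertices of $V^\circ$ are unaffected except possibly at $\omega$ itself, where the new incoming weight $\lambda'_{\omega_0}$ only strengthens the condition. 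I would argue that a sufficiently large common value of the chain weights satisfies all these finitely many new constraints simultaneously, and that non-zeroness is automatic since the weights are chosen non-zero.

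Finally I would assemble the cases into a single clean argument, remarking that in every instance the extension step introduces, at each new vertex, a degree-one constraint that is strictly easier to satisfy than a generic branching constraint, which is the structural reason the proposition holds uniformly across this list. The write-up stays at the level of exhibiting admissible weights and citing the norm and hyponormality characterisations already recorded; no delicate estimate beyond the single-vertex hyponormality inequality is needed.
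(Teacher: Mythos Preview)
Your overall scheme---reduce to one-step extensions along the prepended chain and verify a vertex-local condition---matches the paper's approach, and for the first six classes your choices work. But the hyponormal case contains a genuine error in direction. The vertex-local hyponormality criterion (Theorem~\ref{thm:pow-hypo} with $k=1$, equivalently \cite[Theorem~5.1.2]{szifty}) reads
\[
\hip{1}(v)=\sum_{u\in\Chif(v)}\frac{|\lambda_u|^2}{\|S_{\blambda'}e_u\|^2}\le 1.
\]
At the new vertex $\omega_1$, whose only child is $\omega_0=\omega$, this becomes
\[
\frac{|\lambda'_{\omega_0}|^2}{\|S_\blambda e_\omega\|^2}\le 1,
\]
i.e.\ $|\lambda'_{\omega_0}|\le\|S_\blambda e_\omega\|$: the new weight must be \emph{small}, bounded above by the norm at the old root. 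Similarly, for $j\ge 2$ one gets $|\lambda'_{\omega_{j-1}}|\le|\lambda'_{\omega_{j-2}}|$, so the chain weights must be \emph{nonincreasing} as one moves toward the new root, not nondecreasing. Your proposal to take ``a sufficiently large common value of the chain weights'' therefore fails outright: with any common value $c>\|S_\blambda e_\omega\|$ the inequality at $\omega_1$ is violated. (Also, the incoming weight $\lambda'_{\omega_0}$ does not enter $\hip{1}(\omega)$ at all, so it neither strengthens nor weakens the condition at $\omega$; that remark is a misreading of the criterion.)

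The fix is exactly what the paper does: take $\lambda'_{\omega_j}=\|S_\blambda e_\omega\|$ for all $j=0,\ldots,k-1$. This single choice simultaneously keeps the supremum in Proposition~\ref{pro:shift-basic}~\eqref{shbs:bounded} unchanged, preserves the infimum in \eqref{shbs:below}, maintains the isometry condition when $\|S_\blambda e_\omega\|=1$, and saturates the hyponormality inequality at each $\omega_j$ with equality. The ``non-zero'' hypothesis in the hyponormal case is what guarantees $\|S_\blambda e_\omega\|>0$, so the extension is proper.
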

\begin{proof}
In case of $S_\blambda$ belonging to any of the aforementioned classes,
setting $\lambda_{\omega_j}'=\|S_\blambda e_\omega\|$ for $j=0,\ldots,k-1$ gives
a desired $\Cls$ $k$-step backward extension\footnote{Extending
$0$ weighted shift in the class of bounded or contractive operators
requires}
(cf.\ Definition~\ref{dfn:backward-ext-shift}).
Indeed, for bounded and contractive operators one can use
Proposition~\ref{pro:shift-basic}~\eqref{shbs:bounded}.
For operators bounded from below or expansive c.f.\ 
Proposition~\ref{pro:shift-basic}~\eqref{shbs:below}.
Scalar multiples of isometries can be resolved similarily as in
the proof of  Proposition~\ref{pro:isometry}.
For the case of hyponormal operators see \cite[Theorem~5.1.2]{szifty} (c.f.\ 
\cite[Theorem~4.1]{pikul} with $k=1$ or Theorem~\ref{thm:pow-hypo} in this paper).
\end{proof}

It is worth recalling that according to \cite[Proposition~8.1.7.]{szifty} all
quasinormal proper weighted shifts on directed trees are scalar multiples of isometries.

\subsection{JBEP and its consequences}
A more complex case of a backward extension is passing from a family of weighted
shifts on rooted directed trees to a weighted shift on the rooted sum of given trees
altogether in the same class of operators.
For some classes the following phenomenon can happen.

\begin{Def}\label{dfn:ext-prop} %========= WŁASNOŚĆ ŁĄCZNEGO ROZSZERZANIA ===========
Let $\Cls$ be a class of weighted shifts on directed trees. We say that 
$\Cls$ has the \emph{joint backward extension property (JBEP)} if
the following statement is true.\smallskip

{\centering \framebox[.95\textwidth]{\centering
\begin{minipage}{.85\textwidth}
For every $k\in\ZP$ there exists a constant $c_k>0$ such that for any
non-empty at most countable family $\{S_j\}_{j\in J}$ of bounded
proper weighted shifts on rooted directed trees $\Tree_j=(V_j,\parf_j)$,
\NWSR
\begin{abece}[]
\item there exists a system $\{ \theta_j \}_{j\in J}\subseteq \C\setminus\{0\}$
for which the weighted shift $S_\blambda$ on $\rootsum_{j\in J}\Tree_j=(V_J,\parf)$
with weights $\blambda =\{\lambda_v\}_{v\in V_J}$ such that
(with $\omega_j\in\Troot(\Tree_j)$)
\begin{equation*}%\label{eq:weight-rest0}
\lambda_{\omega_j}=\theta_j \text{ and }
S_{\blambda^\to_{\omega_j}} = S_j
\text{ for } j\in J,
\end{equation*}
is bounded and admits a $\Cls$ $k$-step backward extension,

\item the system $\{ \theta_j \}_{j\in J}$ in (a) can be chosen so that
additionally $\|S_\blambda\|\leq c_k\sup\{\|S_j\|\colon j\in J\}$,

\item $S_j$ admits a $\Cls$ $(k+1)$-step backward extension
for each $j\in J$ and $\sup\{\|S_j\|\colon j\in J\} < \infty$.
\end{abece} \end{minipage}}\par}
\end{Def}%----------------------------------------------------------

For several standard classes of operators it is immediate that they have the
joint backward extension property.

\begin{Pro}The following classes of weighted shifts have the
joint backward extensions property with $c_k=1$:
\begin{abece}
\item bounded,
\item contractive $(T^*T\leq I)$,
\item expansive $(T^*T\geq I)$,
\item isometric $(T^*T= I)$,
\item hyponormal.
\end{abece}
\end{Pro}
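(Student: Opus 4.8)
The plan is to verify the three-way equivalence in the JBEP definition directly for each class, exploiting the fact that all five classes are closed under restriction to invariant subspaces (hence the implication (a)$\Rightarrow$(c) is automatic from the remarks following Definition~\ref{dfn:backward-ext-shift}, using that $\eld{\Des(\omega_j)}$ is invariant for the shift on $\rootsum_{j\in J}\Tree_j$, and that $\|S_j\|\leq\|S_\blambda\|$). The nontrivial direction is (c)$\Rightarrow$(a)\&(b), so I would concentrate there. First I would fix a family $\{S_j\}_{j\in J}$ with a uniform norm bound $M:=\sup_j\|S_j\|<\infty$, each $S_j$ admitting a $\Cls$ $(k+1)$-step backward extension. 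The idea is to build the glued shift $S_\blambda$ on $\rootsum_{j\in J}\Tree_j$ by choosing the new edge-weights $\theta_j=\lambda_{\omega_j}$ appropriately, and then use the already-available $(k+1)$-step extensions of the individual $S_j$ to produce a single $(k+1)$-step extension of $S_\blambda$, which restricts to a $k$-step one as required.

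The key computational step is controlling $\|S_\blambda\|$ via Proposition~\ref{pro:shift-basic}~\eqref{shbs:bounded}. Writing $\omega$ for the new root of the rooted sum, the only vertex whose children-sum changes is $\omega$ itself, where $\sum_{j\in J}|\theta_j|^2$ appears; at every $\omega_j$ and below, the local sums are inherited unchanged from $S_j$. So $\|S_\blambda\|^2=\max\bigl(\sup_j\|S_j\|^2,\ \sum_{j\in J}|\theta_j|^2\bigr)$. For the class-specific choices I would proceed as follows. For bounded shifts, take $\theta_j$ with $\sum_j|\theta_j|^2\leq M^2$ (e.g.\ $|\theta_j|=M\,2^{-j/2}$ after enumerating $J$), giving $\|S_\blambda\|\leq M$ and hence $c_k=1$. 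For contractive ($M\leq 1$) and expansive/isometric shifts the natural choice is $|\theta_j|=\|S_j e_{\omega_j}\|$ so that the bottom edge reproduces the norm transported from $\Tree_j$; one must then check the defining inequality $T^*T\leq I$, $\geq I$, or $=I$ survives at the new root, which reduces to the scalar inequality $\sum_j|\theta_j|^2\lessgtr 1$ being compatible with the inherited data. For the hyponormal case I would invoke the characterisation of hyponormality for tree shifts from \cite{szifty}, again checking that the commutator positivity condition localises to each vertex and that the new root's condition is satisfiable by a suitable $\theta_j$.

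The main obstacle I anticipate is the passage from a joint $k$-step extension of the single glued shift $S_\blambda$ to the conclusion, i.e.\ ensuring that the $(k+1)$-step backward extensions of the individual $S_j$ can be assembled coherently. The subtlety is that each $S_j$ carries its own $(k+1)$-step prefix of new weights $\lambda'_{\omega_0^{(j)}},\dots,\lambda'_{\omega_k^{(j)}}$, and these live on $j$-dependent backward chains that are \emph{not} identified in $\rootsum_{j\in J}\Tree_j$; what JBEP requires is a \emph{common} new root $\omega$ above all the $\omega_j$ with a single backward chain. For the five classes here this is painless because the extension construction in Proposition~\ref{pro:trivial-back-ext} is \emph{pointwise at each vertex} — the new weights above $\omega$ depend only on a supremum/sum of local data and not on the individual tree structure — so one reuses exactly that construction on $S_\blambda$ after the norm has been bounded. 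Concretely, having fixed $\theta_j$ and thus $S_\blambda$ in the class $\Cls$ with $\|S_\blambda\|\leq M$, I would apply Proposition~\ref{pro:trivial-back-ext} (valid for all five classes) to obtain the required $\Cls$ $k$-step backward extension of $S_\blambda$, which simultaneously establishes (a) and, since no new norm growth is introduced beyond the constant already controlled, (b) with $c_k=1$. This closes the cycle (c)$\Rightarrow$(a)$\Rightarrow$(c) together with the norm bound, proving JBEP for each listed class.
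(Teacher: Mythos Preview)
Your overall strategy is exactly the paper's: reduce (c)$\Rightarrow$(a)\&(b) to the two-step procedure ``first build $S_\blambda$ on the rooted sum in the class $\Cls$ with $\|S_\blambda\|\leq M$, then invoke Proposition~\ref{pro:trivial-back-ext} to extend backwards by $k$ steps''. The paper's proof is precisely this, stated in two lines.

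There is, however, a slip in your concrete choice of weights for the contractive, expansive and isometric cases. The suggestion $|\theta_j|=\|S_j e_{\omega_j}\|$ fails whenever $J$ is infinite: for example, if every $S_j$ is an isometry then $\|S_j e_{\omega_j}\|=1$ for all $j$, giving $\sum_{j\in J}|\theta_j|^2=\infty$ and an unbounded $S_\blambda$. The quantity $\|S_\blambda e_\omega\|^2=\sum_{j\in J}|\theta_j|^2$ is a \emph{single} number, not one per $j$, so there is nothing to ``transport'' edge-by-edge. The correct (and entirely elementary) choice is to take any nonzero $\theta_j$ with $\sum_{j\in J}|\theta_j|^2=1$ (isometric and expansive) or $\sum_{j\in J}|\theta_j|^2\leq M^2\leq 1$ (contractive); this simultaneously gives the class membership at the new root and the norm bound $\|S_\blambda\|\leq M$. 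Your own check ``$\sum_j|\theta_j|^2\lessgtr 1$'' would have caught this, so the gap is cosmetic rather than structural.
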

\begin{proof}
Thanks to Proposition~\ref{pro:trivial-back-ext} it is enough to prove
that a family of weighted shifts belonging to one of the above classes
has an extension onto the rooted sum. In the case of hyponormal operators
it can be easily deduced from the condition given in \cite[Theorem~5.1.2]{szifty}.
Note that the assumption of admitting hyponormal $(k+1)$-step backward extension
implies that operators in (c) are non-zero.
Solutions for the remaining classes are straightforward.
\end{proof}

One could expect that for every class each member of which admits $k$-step
backward extension (see Proposition~\ref{pro:trivial-back-ext}) the
joint backward extension property would be satisfied. Such intuition is
misleading and an example are operators bounded from below.

\begin{Pro}\label{pro:below-fail}
The class of bounded from below weighted shifts on directed trees
does not have the joint backward extensions property.
\end{Pro}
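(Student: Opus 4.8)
The plan is to disprove JBEP by exhibiting, for $k=0$, a single family $\{S_j\}_{j\in J}$ of bounded proper weighted shifts for which condition (c) holds but (a) fails. Since such a family breaks the implication (c)$\Rightarrow$(a) irrespective of the value of any putative constant, no admissible $c_0$ can exist and the property fails. The whole argument rests on the observation that, by Proposition~\ref{pro:shift-basic}~\eqref{shbs:below}, a weighted shift $S_\blambda$ on $(V,\parf)$ is bounded from below exactly when
\[ m(S_\blambda):=\inf\Bigl\{\sum_{u\in\Chif(v)}|\lambda_u|^2\colon v\in V\Bigr\}>0, \]
and that this quantity can degenerate across a family while the norms stay uniformly bounded — a degeneration to which condition (c) is blind.

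First I would record how $m$ transforms under the rooted sum. Writing $V_J=\{\omega\}\sqcup\bigsqcup_{j\in J}V_j$, the vertices of $\rootsum_{j\in J}\Tree_j$ split into the new root $\omega$, whose children are precisely the former roots $\omega_j$ carrying the weights $\theta_j$, and the vertices lying inside each $V_j$, whose children and weights are unchanged from $\Tree_j$. Consequently, for every admissible system $\{\theta_j\}\subseteq\C\setminus\{0\}$,
\[ m(S_\blambda)=\min\Bigl\{\textstyle\sum_{j\in J}|\theta_j|^2,\ \inf_{j\in J}m(S_j)\Bigr\}. \]
In particular, if $\inf_{j}m(S_j)=0$, then $m(S_\blambda)=0$ for every choice of the $\theta_j$, so $S_\blambda$ is never bounded from below.

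Next I would choose the family. Take $J=\N$ and let $S_j$ be the classical unilateral weighted shift on $\eld{\ZP}$ whose first weight equals $1/j$ and whose remaining weights all equal $1$. Each $S_j$ is a bounded proper weighted shift that is bounded from below, with $\|S_j\|=1$ and $m(S_j)=1/j^2$; hence $\sup_j\|S_j\|=1<\infty$ while $\inf_j m(S_j)=0$. By Proposition~\ref{pro:trivial-back-ext} each $S_j$ admits a bounded-from-below $1$-step backward extension, so condition (c) is satisfied. On the other hand, admitting a bounded-from-below $0$-step backward extension means exactly that $S_\blambda$ is itself bounded from below; by the displayed formula and $\inf_j m(S_j)=0$ this is impossible for any nonzero $\theta_j$ (even though the $\theta_j$ can be chosen summable, so that $S_\blambda$ is bounded). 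Thus (c) holds while (a) fails, contradicting JBEP.

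I do not anticipate a genuine obstacle here: the disproof is a clean counterexample, and its only conceptual ingredient is the rooted-sum formula for $m$ together with the degenerating choice of weights. The sole point requiring a word of care is that the argument in fact fails for \emph{every} $k$, not just $k=0$: since the class of bounded-from-below operators is closed under restriction to closed invariant subspaces (if $\|Tf\|\geq c\|f\|$ for all $f$, the same holds for $f$ in an invariant subspace), any bounded-from-below $k$-step backward extension restricts to $S_\blambda$ on $\eld{V_J}$ and therefore forces $S_\blambda$ itself to be bounded from below, which the same family rules out. This observation is pure bookkeeping and is not needed for the $k=0$ disproof that already settles the statement.
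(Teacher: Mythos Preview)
Your proof is correct and follows essentially the same approach as the paper: both construct a countable family of unilateral weighted shifts, each bounded from below and uniformly bounded in norm, whose lower bounds tend to zero, so that any rooted-sum extension fails to be bounded from below while condition~(c) is met via Proposition~\ref{pro:trivial-back-ext}. The paper's version takes all weights of $S_n$ equal to $1/n$, whereas you make only the first weight small; your write-up is more explicit about the rooted-sum formula for the lower bound, but the idea is identical.
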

\begin{proof} Consider a countable family $\{S_n\}_{n\in \N}$ of unilateral
weighted shifts ($\Tree_n\simeq (\N,\parf)$ for $n\in\N$, with
$\parf(n):=\max\{1,n-1\}$) such that $\|S_n e_v\|=\frac{1}{n}$ for each vertex
$v$ of $\Tree_n$. According to Proposition~\ref{pro:trivial-back-ext},
$S_n$ admits a bounded from below $k$-step backward extension for every $n,k\in \N$.
On the other hand, the operators $\{S_n\}_{n\in \N}$ clearly cannot be extended to
a single operator bounded from below.
\end{proof}
The above proof justifies that also the class of scalar multiples
of isometries (i.e.\ quasinormal weighted shifts) does not satisfy JBEP.

The joint backward extension property has an impressive consequence.
It is related to extending at most countable family of weighted
shifts on directed trees to a weighted shift on any completion
of the corresponding trees by a finite-depth\footnote{The term \emph{depth}
appears here without a formal definition, as it is not necessary in the
statement of the theorem. Figures~\ref{fig:joint-extension} and \ref{fig:two-trees}
should give an intuition.} structure preserving the class in question.

\begin{figure}[htb]\includegraphics[width=.95\textwidth]{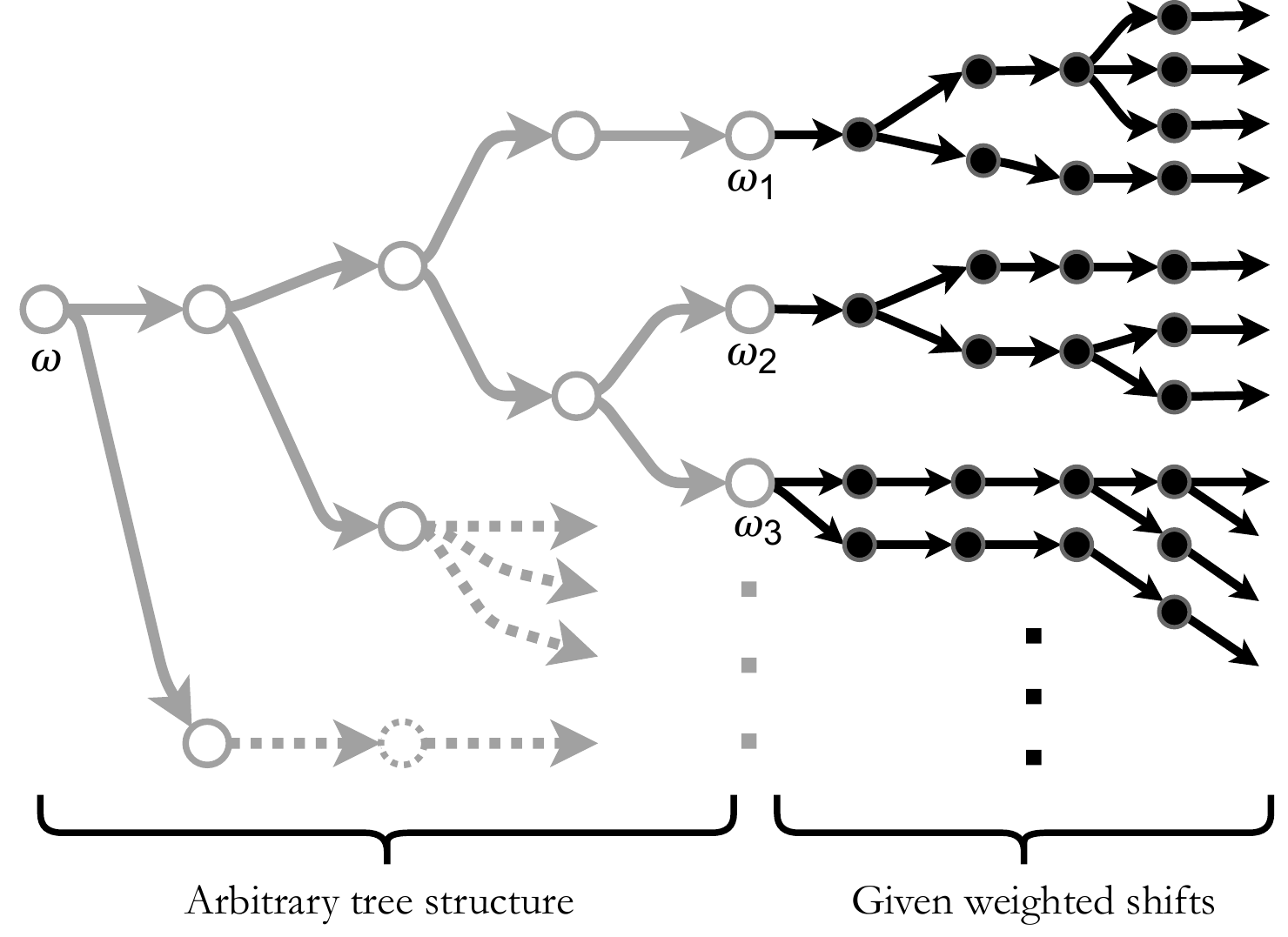}
\caption[Joint backward extension]{Subtrees joined ``at the depth $k=4$''; here
$\Chif[4](\omega)=\{\omega_1,\omega_2,\omega_3,\ldots\}$ and vertices from $W_4$
are black while those outside $W_4$ are gray.}
\label{fig:joint-extension}
\end{figure}

\begin{Thm}\label{thm:joint-extension}
Let $\Cls$ be a class with the joint backward extension property,
$\Tree=(V,\parf)$ be a leafless directed tree with the root $\omega$ and
$k\geq 1$. Assume that weights
$\{\lambda_v\}_{v\in W_k}\subseteq \C\setminus\{0\}$ are given, where
%\begin{equation}\label{eq:Wk}
\[ W_k:=\bigcup_{v\in\Chif[k](\omega)} \Deso(v). \]
%\end{equation}
Then \NWSR \begin{abece}
\item
there exist $\{\lambda_v\}_{v\in V\setminus W_k} \subseteq \C$ such that
the weighted shift $S_\blambda$ with weights $\blambda=\{\lambda_v\}_{v\in V}$
is bounded proper and belongs to $\Cls$,
\item
the set $\Chif[k](\omega)$ is at most countable,
the weighted shift $S_{\blambda^\to_v}$ $($see \eqref{eq:destree}$)$ is 
bounded proper and admits a $\Cls$ $k$-step backward extension
for every $v\in\Chif[k](\omega)$,
and $\sup\{\|S_{\blambda^\to_v}\|\colon v\in \Chif[k](\omega)\} <\infty$.
\end{abece}
\end{Thm}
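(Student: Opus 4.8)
The plan is to prove the equivalence by establishing both implications, with the interesting direction being $\mathrm{(b)}\Rightarrow\mathrm{(a)}$, since $\mathrm{(a)}\Rightarrow\mathrm{(b)}$ should be essentially a restriction argument. For $\mathrm{(a)}\Rightarrow\mathrm{(b)}$, assume a global weighted shift $S_\blambda$ on $\Tree$ that is bounded, proper and belongs to $\Cls$. For each $v\in\Chif[k](\omega)$, the subtree $\Tree_{(v\to)}$ gives the restriction $S_{\blambda^\to_v}=S_\blambda\rest{\eld{\Des(v)}}$ to an invariant subspace (by Proposition~\ref{pro:shift-basic}~\eqref{shbs:Des-inv}), which therefore lies in $\Cls$ because the class is closed under restrictions to invariant subspaces. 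The uniform norm bound is immediate since each restriction has norm at most $\|S_\blambda\|$. The countability of $\Chif[k](\omega)$ follows from Proposition~\ref{pro:shift-basic}~\eqref{shbs:countable}, as a proper densely defined shift forces local countability. That each $S_{\blambda^\to_v}$ admits a $\Cls$ $k$-step backward extension is witnessed by $S_\blambda$ itself: the path $\omega,\parf^{-1}\text{-ancestors down to }v$ (that is, $\parf^{k-1}(v),\ldots,\parf(v),\omega$ together with the existing weights) realises exactly a $k$-step backward extension of $S_{\blambda^\to_v}$ inside $\Tree$, and one checks these intermediate weights are nonzero by properness.

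For the substantive direction $\mathrm{(b)}\Rightarrow\mathrm{(a)}$, the idea is to decompose $\Tree$ along the ``fan'' of $k$-th children of $\omega$ and reassemble it as a rooted sum to invoke JBEP. Write $\{v_j\}_{j\in J}:=\Chif[k](\omega)$, which is at most countable by hypothesis, and set $S_j:=S_{\blambda^\to_{v_j}}$, a family of bounded proper weighted shifts on the rooted trees $\Tree_{(v_j\to)}$ with $\sup_j\|S_j\|<\infty$, each admitting a $\Cls$ $(k+1)$-step backward extension. The first step is to recognise that the subtree $\Tree_{(\omega\to)}=\Tree$ is, up to isomorphism, a $k$-step backward extension (along the spine $\omega,\ldots,\parf^{k-1}(v_j)$) of the rooted sum $\rootsum_{j\in J}\Tree_{(v_j\to)}$ — here I would use property~\eqref{eq:child-sum} iteratively to peel off $k$ levels, noting that a leafless tree collapses to a single spine above depth $k$ only if each intermediate vertex has degree one, which need not hold; so the correct statement is that $\Tree$ arises by $k$-step backward extension from the rooted sum after identifying the intermediate structure with the spine of new vertices $\omega_1,\ldots,\omega_k$ in Definition~\ref{dfn:backward-ext}. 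Since the family $\{S_j\}$ satisfies condition~(c) of the JBEP definition (admitting $(k+1)$-step extensions, uniformly bounded), JBEP yields condition~(a)/(b): there is a choice of nonzero root-weights $\{\theta_j\}$ making the shift on the rooted sum bounded, belonging to $\Cls$, admitting a $\Cls$ $k$-step backward extension, and with norm controlled by $c_k\sup_j\|S_j\|$.

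The final assembly step is to transport the $\Cls$ $k$-step backward extension of the rooted-sum shift, provided by JBEP, back onto $\Tree$. Because $\Tree$ is (isomorphic to) the $k$-step backward extension of $\rootsum_{j\in J}\Tree_{(v_j\to)}$, the weighted shift furnished by JBEP is precisely a weighted shift on $\Tree$; its weights on $W_k=\bigcup_{v\in\Chif[k](\omega)}\Deso(v)$ coincide with the prescribed $\{\lambda_v\}_{v\in W_k}$ (after the isomorphism matching $\Des(v_j)$-weights to $S_j$), while the weights on $V\setminus W_k$ — namely the root-edges $\theta_j$ and the spine weights from the $k$-step extension — are the newly constructed ones. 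This produces the required $\{\lambda_v\}_{v\in V\setminus W_k}\subseteq\C$ with $S_\blambda$ bounded, proper (all new weights nonzero by JBEP) and in $\Cls$. I expect the main obstacle to be the bookkeeping of the tree isomorphism in this identification: one must verify carefully that the abstract rooted sum together with its $k$-step backward extension is genuinely isomorphic to $\Tree$, matching the canonical spine $\omega_0,\ldots,\omega_k$ against the unique length-$k$ paths from $\omega$ down to each $v_j$, and that properness is preserved so that the zero set of the weights is exactly $\Troot(\Tree)=\{\omega\}$.
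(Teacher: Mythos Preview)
Your proposal has a genuine structural gap in both directions, stemming from the same misconception: the first $k$ generations of $\Tree$ need \emph{not} form a single spine, so $\Tree$ is in general \emph{not} isomorphic to $\bigl(\rootsum_{j\in J}\Tree_{(v_j\to)}\bigr)\sqlow{k-1}$. You actually notice this yourself (``only if each intermediate vertex has degree one, which need not hold''), but then assert an ``identification'' of the intermediate branching structure with the linear spine $\omega_1,\ldots,\omega_k$ of Definition~\ref{dfn:backward-ext}. No such identification exists: the whole point of the theorem (cf.\ Figure~\ref{fig:two-trees}) is that the first $k$ levels may branch arbitrarily, and JBEP, as formulated, only produces a weighted shift on a rooted sum together with a \emph{linear} $k$-step backward extension. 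A single application of JBEP therefore cannot furnish weights for an arbitrarily branching top of $\Tree$.

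The same issue breaks your $\mathrm{(a)}\Rightarrow\mathrm{(b)}$ argument. For $v\in\Chif[k](\omega)$, the set $\{v,\parf(v),\ldots,\omega\}\cup\Des(v)$ is typically \emph{not} $S_\blambda$-invariant, because $\parf^j(v)$ may have children other than $\parf^{j-1}(v)$; hence the ``path plus $\Des(v)$'' does not carry a restriction of $S_\blambda$, and you cannot conclude that the resulting shift on $\Tree_{(v\to)}\sqlow{k}$ lies in $\Cls$. (There is also a minor slip: condition~(b) gives only $k$-step, not $(k+1)$-step, backward extensions of the $S_j$.) The paper resolves both directions by induction on $k$: for $\mathrm{(b)}\Rightarrow\mathrm{(a)}$ one applies JBEP at each $u\in\Chif[k](\omega)$ to the family $\{S_{\blambda^\to_v}\}_{v\in\Chif(u)}$ (using \eqref{eq:child-sum}) to manufacture weights on $\Chif(u)$ so that each $S_{\blambda^\to_u}$ admits a $\Cls$ $k$-step backward extension with a uniform norm bound, and then invokes the induction hypothesis at depth $k$; the converse is handled symmetrically. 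This level-by-level use of JBEP is precisely what accommodates arbitrary branching in the first $k$ generations.
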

It may be worth noticing that $W_k= \{v\in V\colon \parf^k(v)\neq\omega\}$.

\begin{proof}
By Proposition~\ref{pro:shift-basic}~(\ref{shbs:countable}) and the assumption
that $\Tree$ is leafless any of the conditions (a) or (b) implies that the
entire tree (i.e.\ the set $V$) is at most countable.

%\lImp{a}{b}
We will proceed by induction on $k$.

The case $k=1$ is covered by equivalence (a)$\Leftrightarrow$(c) in
Definition~\ref{dfn:ext-prop} with $k=0$.
Now assume that (a) and (b) are equivalent for a fixed $k\geq 1$. We will show
that (a)$\Leftrightarrow$(b) for $k+1$

\pImp{b}{a} Assume that for each $v\in \Chif[k+1](\omega)$, the weighted shift
$S_{\blambda^\to_v}$ is bounded by a common constant $C>0$ and admits a $\Cls$ 
$(k+1)$-step backward extension. For a fixed $u\in \Chif[k](\omega)$ we can apply
the joint backward extension property to the family
$\{S_{\blambda^\to_v}\}_{v\in\Chif(u)}$ in order to get nonzero weights
$\{\lambda_v\}_{v\in \Chif(u)}$ such that the weighted shift $S_{\blambda^\to_u}$
admits a $\Cls$ $k$-step backward extension (see \eqref{eq:child-sum}).
According to the condition (b) in Definition~\ref{dfn:ext-prop} the weights can
be chosen such that the norm of $S_{\blambda^\to_u}$ is bounded by $c_kC$
for all $u\in \Chif[k](\omega)$.

Summarising, $\{ S_{\blambda^\to_u} \}_{u\in \Chif[k](\omega)}$ is a uniformly
bounded family of proper weighted shifts, each of which admits a $\Cls$ $k$-step
backward extension.
Using the induction hypothesis completes the proof of (a).

\pImp{a}{b}
Now assume that there exist weights $\{\lambda_v\}_{v\in V\setminus W_{k+1}}$
such that $S_\blambda$ is bounded proper and belongs to $\mathscr{C}$. We want to
show that each weighted shift $S_{\blambda^\to_v}$ with $v\in\Chif[k+1](\omega)$
admits $\Cls$ $(k+1)$-step backward extension. They are uniformly bounded as
restrictions of the bounded operator $S_\blambda$ to invariant subspaces
(see Proposition~\ref{pro:shift-basic}~\eqref{shbs:Des-inv}).

The whole system $\blambda$ can be seen as an extension of the
system $\{\lambda_v\}_{v\in W_k}$ since $W_k\supseteq W_{k+1}$.
Then, by induction hypothesis, % we know that
each weighted shift $S_{\blambda^\to_u}$
for $u\in\Chif[k](\omega)$ admits a $\Cls$ $k$-step backward extension.
If $u\in\Chif[k](\omega)$, then $S_{\blambda^\to_u}$ is a weighted shift
on a rooted sum of the family $\{\Tree_{(v\to)}\}_{v\in\Chif(u)}$ (cf.\ 
\eqref{eq:child-sum}).
It follows from the joint backward extension property that $S_{\blambda^\to_v}$
admits a $\Cls$ $(k+1)$-step backward extension for every $v\in\Chif(u)$
and every $u\in\Chif[k](\omega)$.
By Lemma~\ref{lem:tree-basics}~(\ref{tbs:dzieci}),
\[\Chif[k+1](\omega)=\bigcup_{u\in\Chif[k](\omega)}\Chif(u),\]
which shows that (b) is valid. This completes the proof.
\end{proof}

\begin{figure}[htb]
	\includegraphics[width=.9\textwidth]{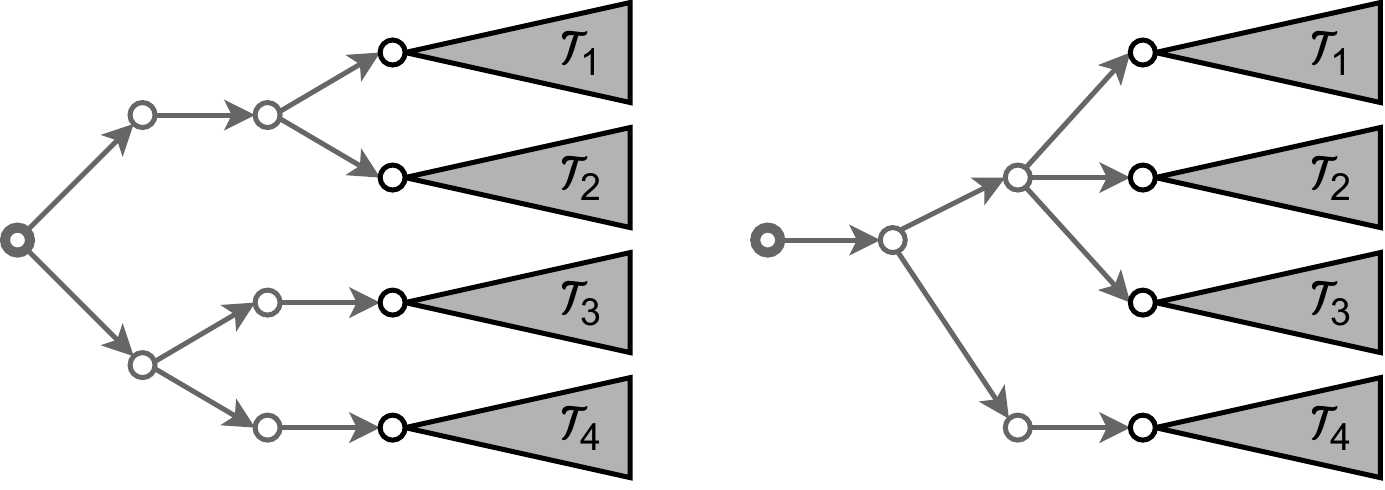}
	\caption[Two non-isomorphic joint backward extensions]{An example of two
		non-isomorphic directed trees completing a family
		$\{\Tree_1,\Tree_2,\Tree_3,\Tree_4\}$ at the depth $k=3$.
		According to Theorem~\ref{thm:joint-extension}, an extension in class $\Cls$
		exists either in both cases	or in neither.}
	\label{fig:two-trees}
\end{figure}

\begin{Rem}
Theorem~\ref{thm:joint-extension} can be interpreted in a different way.
Given a family of rooted directed trees we can complete them into
one directed tree by adding some new vertices such that the roots
of initial trees become the $k$-th children of the root of the new
tree. The value of this theorem is that the condition (b) does not
depend on the structure of the first $k$ generations of the enveloping
tree. It is worth pointing out that for $k\geq 2$ the same family of
trees can be completed in a variety of ways with enveloping directed
trees being not-isomorphic (see Figure~\ref{fig:two-trees}).
\end{Rem}

\begin{Rem}
From the proof of Theorem~\ref{thm:joint-extension} one can deduce,
that the construction of joint backward extension is possible also if
the roots of the trees from the predefined family are not necessarily
all on the same level. It suffices that if for some 
$v\in\Chif[j](\omega)$ the weighted shift $S_{\blambda^\to_v}$ is
fixed, then it admits $\Cls$ $j$-step backward extension.
\end{Rem}

\begin{Thm}\label{thm:sub-JBEP}
The class of bounded subnormal weighted shifts on directed trees
has the joint backward extension property.\end{Thm}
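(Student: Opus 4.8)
The plan is to verify the three-way equivalence in the definition of JBEP for the class of bounded subnormal weighted shifts, with the crucial implication being $(c)\Rightarrow(a)$ and $(c)\Rightarrow(b)$, which requires constructing a joint backward extension onto the rooted sum with controlled norm. The natural tool here is the characterisation of subnormality of weighted shifts on directed trees via consistent systems of probability measures: a bounded $S_\blambda$ is subnormal if and only if each vertex $v$ carries a measure $\mu_v$ on $[0,\infty)$ such that $\|S_\blambda e_v\|^2 = \int t\,\ud\mu_v(t)$ and the measures satisfy the consistency relations linking a parent's measure to the weighted sum of its children's measures (this is the Lambert–type / moment condition from \cite{szifty}). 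First I would reduce to non-negative weights using the standard gauge transformation recalled at the end of Section~\ref{R:szifty}, so that the problem becomes one about moment sequences and positive measures.

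The heart of the argument is the following. Assuming $(c)$, each $S_j$ admits a subnormal $(k+1)$-step backward extension; in particular each $S_j$ itself is subnormal, so it comes with a consistent family of measures $\{\mu^{(j)}_v\}_{v\in V_j}$, and the $(k+1)$-step extendability of $S_j$ translates into the existence of a measure at the new root whose moments of order up to $k+1$ dominate, in the appropriate backward-extension sense, the moment data at $\omega_j$. I would then build the extension onto $\rootsum_{j\in J}\Tree_j$ by choosing the new edge weights $\theta_j=\lambda_{\omega_j}$ and placing at the fresh root $\omega$ a single measure $\mu_\omega$ that is simultaneously consistent with all the families. Because the new root must be the common parent, the consistency relation at $\omega$ reads $\mu_{\omega}$ restricted through the children equals $\sum_{j\in J}|\theta_j|^2\,\tfrac{1}{t}\,\mu^{(j)}_{\omega_j}$ in the sense of the backward moment transform; the existence of a genuinely backward-extendable $\mu_\omega$ is exactly what the individual $(k+1)$-step assumptions guarantee at each $\omega_j$, and subnormality is preserved under taking the common parent because the consistency conditions are \emph{local} to each edge. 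I expect the main obstacle to be controlling the norm: one must choose the $\theta_j$ small enough that $\sum_{u\in\Chif(\omega)}|\lambda_u|^2$ stays bounded by $c_k\sup_j\|S_j\|$ (so that Proposition~\ref{pro:shift-basic}~\eqref{shbs:bounded} gives boundedness with the required constant), while large enough that the backward measures remain valid probability-type measures; balancing these two requirements, and doing so uniformly in $k$, is the delicate quantitative point.

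For the reverse implications, $(a)\Rightarrow(b)$ and $(b)\Rightarrow(c)$ are comparatively soft. Given $(a)$, the operators $S_j=S_{\blambda^\to_{\omega_j}}$ are restrictions of the subnormal $S_\blambda$ to the invariant subspaces $\eld{\Des(\omega_j)}$ (Proposition~\ref{pro:shift-basic}~\eqref{shbs:Des-inv}), hence subnormal and uniformly bounded by $\|S_\blambda\|$; peeling off one further level of the backward extension of $S_\blambda$ shows each $S_j$ admits a subnormal $(k+1)$-step backward extension, giving $(c)$. Since $(b)$ trivially implies $(a)$, and the genuinely new content is the norm bound, the cycle closes once the measure-theoretic construction above is carried out. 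Throughout, the key structural fact I would lean on is that subnormality, being expressible through a \emph{consistent} family of measures attached vertex-by-vertex, is insensitive to how children of a common parent are grouped — precisely the feature that makes the rooted-sum extension work and that fails for completely hyperexpansive operators (cf.\ Example~\ref{ex:che-nonjoint}).
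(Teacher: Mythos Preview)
Your outline is headed in the right direction and is, in fact, the same route the paper takes: the paper does not give a self-contained argument for Theorem~\ref{thm:sub-JBEP} but simply invokes \cite[Lemma~5.6]{pikul}, and that lemma is precisely the measure-theoretic construction you describe (Stieltjes moment data $\{\|S_j^n e_{\omega_j}\|^2\}_n$ with representing measures $\mu_{\omega_j}$, the relation $t\,\ud\mu_\omega=\sum_j|\theta_j|^2\,\ud\mu_{\omega_j}$ at the new root, and the backward-extension criterion $\int t^{-m}\,\ud\mu<\infty$). So strategically there is nothing to correct.

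What is missing is exactly the step you yourself single out as ``the delicate quantitative point'', and it is not a detail one can wave past: it is the whole content of $(c)\Rightarrow(b)$. The $(k{+}1)$-step extendability of $S_j$ amounts to finiteness of $C_j:=\int t^{-(k+1)}\,\ud\mu_{\omega_j}$, and for the shift on the rooted sum to be bounded, subnormal, and $k$-step extendable you need $|\theta_j|^2=a_j$ chosen so that $\sum_j a_j<\infty$, $\sum_j a_j C_j<\infty$, and $\mu_\omega$ is a genuine positive measure with total mass~$1$ (the last forces a normalisation, not merely smallness of the $a_j$). Until those $a_j$ are actually produced and the constant $c_k$ read off, you have a plan, not a proof. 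The computation is entirely parallel to the one carried out in this paper for the power-hyponormal case (proof of Theorem~\ref{thm:powhyp-JBEP}, the choice of $\{a_j\}$ satisfying $(*)$ and $(**)$), and if you mimic that argument with the moment integrals $C_j$ in place of the constants from Lemma~\ref{lem:powhyp-ext}, the construction goes through with $c_k=1$.
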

The above theorem follows from \cite[Lemma~5.6]{pikul}.
Theorem~5.7 therein was a particular case of
Theorem~\ref{thm:joint-extension} above.
In the forthcoming section we investigate whether classes of power
hyponormal and completely hyperexpansive weighted shifts satisfy JBEP.

%------------------------------------------------------------------------------------
\section{Power hyponormality}\label{sec:powhyp1}
All hyponormal classical weighted shifts on $\ell^2$ are automatically
power hyponormal. This is no longer the case for weighted shifts on
directed trees, as shown by \cite[Example~5.3.2.]{szifty}.
In \cite[Section~4]{pikul} a description of all those directed trees
on which every bounded hyponormal and proper weighted shift is power
hyponormal is given.

The characterisation of hyponormality of weighted shifts on directed
trees was presented in \cite[Theorem 5.1.2]{szifty}.
The theorem below is a generalisation to the case of arbitrary powers
of proper weighted shifts on directed forests.
It can be deduced from the result for trees or found in \cite[Section~4]{pikul}.

\begin{Thm}\label{thm:pow-hypo}
A bounded proper weighted shift $S_\blambda$ on a directed forest $\Tree=(V,\parf)$
is power hyponormal if and only if the following two conditions are satisfied:
\begin{abece}
\item the forest $\Tree$ is leafless,
\item for all $v\in V$ and $k\in \N$
\begin{equation}\label{eq:k-hypo}
\hip{k}(v)=\hip{k}_\blambda(v):=\sum_{u\in\Chif[k](v)}
\frac{|\lambda_u^{(k)}|^2}{\|S_\blambda^k e_u\|^2}\leq 1.
\end{equation}\end{abece}\end{Thm}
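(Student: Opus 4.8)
The plan is to deduce the statement from the known characterisation of ordinary hyponormality by applying it to each power of $S_\blambda$ separately. By definition $S_\blambda$ is power hyponormal exactly when $S_\blambda^k$ is hyponormal for every $k\geq 1$, and, as recalled after Lemma~\ref{lem:powers}, the operator $S_\blambda^k$ is again a weighted shift: it is $S_{\blambda^{(k)}}$ on the power forest $\Tree^k$, with weights $\blambda^{(k)}=\{\lambda_v^{(k)}\}_{v\in V}$. Since a weighted shift on a forest decomposes as the orthogonal sum of its restrictions to the reducing subspaces of the tree components, and hyponormality is verified separately on each component, the hyponormality criterion of \cite[Theorem~5.1.2]{szifty} (in its forest form, cf.\ \cite[Section~4]{pikul}) may be applied to $\Tree^k$ vertex by vertex.

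First I would transcribe that criterion for $S_{\blambda^{(k)}}$ on $\Tree^k$. By Lemma~\ref{lem:powers} the vector $S_\blambda^k e_u$ is supported on $\Chif[k](u)$ with coefficients $\lambda^{(k)}$, so the children of $v$ in $\Tree^k$ are exactly the elements of $\Chif[k](v)$, the weight attached to $u\in\Chif[k](v)$ is $\lambda_u^{(k)}$, and the pertinent norm is $\|S_{\blambda^{(k)}}e_u\|=\|S_\blambda^k e_u\|$. Hence the vertex inequality of the criterion, read for every $v\in V$, becomes precisely $\hip{k}(v)\leq 1$, where a summand with vanishing denominator is interpreted as $+\infty$ (so that the inequality simultaneously forbids $\lambda_u^{(k)}\neq 0$ when $\|S_\blambda^k e_u\|=0$). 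Collecting over $k$ shows that $S_\blambda$ is power hyponormal if and only if $\hip{k}(v)\leq 1$ for all $k\in\N$ and $v\in V$, understood with this convention.

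It then remains to match this family of inequalities with the conjunction of (a) and (b). Assuming power hyponormality, I would read off leaflessness from the case $k=1$: if $w$ were a leaf, then $w\in V^\circ$ and $\deg(w)=0$ by Lemma~\ref{lem:tree-basics}~\eqref{tbs:leafs}, so $\|S_\blambda e_w\|=0$, while $\lambda_w\neq 0$ because $S_\blambda$ is proper and $w$ is not a root; putting $v:=\parf(w)$, the $w$-summand of $\hip{1}(v)$ would equal $+\infty$, contradicting $\hip{1}(v)\leq 1$. This yields (a), and the remaining inequalities are exactly (b). Conversely, under (a) and (b), leaflessness and properness guarantee that for every $v$, every $k$ and every $u\in\Chif[k](v)$ the denominator $\|S_\blambda^k e_u\|$ is strictly positive; the sum defining $\hip{k}(v)$ is then honest, (b) is literally the required bound, and all the inequalities hold.

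The only point requiring care is this strict positivity of the denominators in the converse direction. For $u\in\Chif[k](v)$ the vertex $u$ is not a root (being a genuine $k$-th child), so by leaflessness it has a descendant $w\in\Chif[k](u)$; the vertices $\parf^j(w)$ for $0\leq j\leq k-1$, which are exactly the factors entering $\lambda_w^{(k)}$, are all non-roots, whence properness gives $\lambda_w^{(k)}\neq 0$ and $\|S_\blambda^k e_u\|^2\geq|\lambda_w^{(k)}|^2>0$. This is immediate from the forest axiom, and the rest of the argument is a direct transcription of the hyponormality criterion through the power-forest description of $S_\blambda^k$.
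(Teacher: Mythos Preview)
The paper does not give its own proof here; it only indicates that the result follows from \cite[Theorem~5.1.2]{szifty} (or \cite[Section~4]{pikul}) applied to each power, and your deduction via the power-forest description of $S_\blambda^k$ is exactly that route and is correct. One small wrinkle worth recording: for a root $v$ the set of children in $\Tree^k$ is $\bigcup_{j=1}^k\Chif[j](v)$ rather than $\Chif[k](v)$, but the extra children carry weight $\lambda_u^{(k)}=0$ (the product passes through $\lambda_v=0$), so the hyponormality sum still collapses to $\hip{k}(v)$ as you claim.
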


%---------------------------

The following lemma reduces the question about admitting a power hyponormal
$k$-step backward extension for a weighted shift on a rooted directed tree to
the existence of a single number.
For the notion of the $k$-step backward extension %$\Tree\sqlow{k}$
of a directed tree see Definition~\ref{dfn:backward-ext}.

\begin{Lem} Fix $k\in\N$. Suppose that $\Tree\sqlow{k}=(V\sqlow{k},\parf\sqlow{k})$
is a $k$-step backward extension of a rooted directed tree $\Tree=(V,\parf)$ and
$S_\blambda$ is a bounded power hyponormal proper weighted shift on $\Tree\sqlow{k}$
with weights $\blambda=\{\lambda_v\}_{v\in V\sqlow{k}}$.
Let $\tlambda=\{\tilde\lambda_v\}_{v\in V\sqlow{k}}$ be a system of weights such
that $\tilde\lambda_v:=\lambda_v$ for $v\in V^\circ$, $\tilde\lambda_{\omega_j}:=\lambda_{\omega_{k-1}}$ for $j=0,\ldots,k-1$
and $\tilde\lambda_{\omega_k}:=0$.
Then the weighted shift $S_\tlambda$ on $\Tree\sqlow{k}$ is proper and power
hyponormal.
\end{Lem}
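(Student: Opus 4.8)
The plan is to establish the two conditions of Theorem~\ref{thm:pow-hypo} for $S_\tlambda$, after first checking that $S_\tlambda$ is bounded and proper so that the theorem even applies. Properness is immediate: the only weight of $\tlambda$ that vanishes is $\tilde\lambda_{\omega_k}=0$, and $\omega_k$ is the root of $\Tree\sqlow{k}$; every other weight is either an unchanged nonzero $\lambda_v$ with $v\in V^\circ$, or equals $\lambda_{\omega_{k-1}}\neq 0$. Boundedness is equally routine, since $S_\tlambda$ differs from $S_\blambda$ only in the finitely many weights along the tail $\omega_k,\omega_{k-1},\ldots,\omega_0=\omega$, each reset to $\lambda_{\omega_{k-1}}$, whose modulus equals $\|S_\blambda e_{\omega_k}\|\leq\|S_\blambda\|$; thus Proposition~\ref{pro:shift-basic}~\eqref{shbs:bounded} gives $S_\tlambda\in\BO{\eld{V\sqlow{k}}}$. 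Condition (a) of Theorem~\ref{thm:pow-hypo} is then free: $S_\tlambda$ acts on the \emph{same} tree $\Tree\sqlow{k}$, which is leafless precisely because $S_\blambda$ is power hyponormal. Everything therefore reduces to verifying $\hip{n}_\tlambda(v)\leq 1$ for all $v\in V\sqlow{k}$ and all $n\geq 1$.

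For an interior vertex $v\in V^\circ$ this holds for free: the set $\Chif[n](v)$, the products $\lambda^{(n)}_u$, and the norms $\|S^n_\tlambda e_u\|$ entering $\hip{n}_\tlambda(v)$ involve only edges strictly below $\omega$, where $\tlambda$ agrees with $\blambda$; hence $\hip{n}_\tlambda(v)=\hip{n}_\blambda(v)\leq 1$. So the real content lies at the tail vertices $\omega_i$, $0\leq i\leq k$. The decisive input is a monotonicity forced by hyponormality of $S_\blambda$: the inequality $\hip{1}_\blambda(\omega_j)\leq 1$ (a special case of \eqref{eq:k-hypo}) at the vertex $\omega_j$, whose unique child is $\omega_{j-1}$, yields $|\lambda_{\omega_{j-1}}|\leq|\lambda_{\omega_{j-2}}|$ for $2\leq j\leq k$, so that
\[ |\lambda_{\omega_{k-1}}|\leq|\lambda_{\omega_{k-2}}|\leq\cdots\leq|\lambda_{\omega_0}|. \]
Writing $\mu:=\lambda_{\omega_{k-1}}$ for the weight copied down the whole tail, this says that $|\mu|$ is the \emph{smallest} of the tail weights, whence $|\mu|^{p}\leq|\lambda_{\omega_0}\cdots\lambda_{\omega_{p-1}}|$ for every $1\leq p\leq k$.

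The remaining step is to compute $\hip{n}_\tlambda(\omega_i)$, splitting by where the $n$-th descendants of $\omega_i$ lie. If they stay on the tail (the case $2n\leq i$), numerator and denominator are equal powers of $|\mu|$ and $\hip{n}_\tlambda(\omega_i)=1$. Otherwise the relevant descending paths run past $\omega_0=\omega$ into the original tree; collecting the constant tail factor $|\mu|$ separately from the unchanged tree factors $\lambda^{(\cdot)}_w$ and $\|S^{\cdot}_\blambda e_w\|$ produces an identity of the form
\[ \hip{n}_\tlambda(\omega_i)=\frac{|\mu|^{2p}}{|\lambda_{\omega_0}\cdots\lambda_{\omega_{p-1}}|^2}\,\hip{q}_\blambda(\omega_p), \]
with $(p,q)=(i,n)$ when $n>i$, and $(p,q)=(2n-i,2n-i)$ when $i<2n\leq 2i$ (note $p\leq k$, so $\omega_p$ is a genuine tail vertex). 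In both cases the first factor is $\leq 1$ by the monotonicity just established, while the second is $\leq 1$ since $S_\blambda$ is power hyponormal; hence $\hip{n}_\tlambda(\omega_i)\leq 1$. Together with the interior case this verifies condition (b), and Theorem~\ref{thm:pow-hypo} then yields that $S_\tlambda$ is proper and power hyponormal, as claimed.

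I expect the main obstacle to be the last case analysis, specifically the self-overlapping regime $i<2n\leq 2i$ in which the descending paths from $\omega_i$ re-enter the tail before reaching $\omega$: there the flattened tail is traversed both upward (in the numerator $\lambda^{(n)}_u$) and downward (in the denominator $\|S^n_\tlambda e_u\|$), and keeping the two occurrences of $|\mu|$ in balance is what forces one to invoke $\hip{2n-i}_\blambda(\omega_{2n-i})\leq 1$ rather than $\hip{n}_\blambda(\omega_i)\leq 1$. Conceptually, though, the whole argument hinges on the single observation that hyponormality makes $\lambda_{\omega_{k-1}}$ the smallest tail weight, so that flattening the tail to this value can only decrease the ratios $\hip{n}$.
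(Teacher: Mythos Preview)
Your proof is correct and follows essentially the same approach as the paper: both use the monotonicity $|\lambda_{\omega_{k-1}}|\leq|\lambda_{\omega_{k-2}}|\leq\cdots\leq|\lambda_{\omega_0}|$ (derived from $\hip{1}_\blambda(\omega_j)\leq 1$) and then treat the three cases $2n\leq i$, $n\leq i<2n$, $n>i$ for the tail vertices $\omega_i$, comparing $\hip{n}_\tlambda(\omega_i)$ with $\hip{n}_\blambda(\omega_i)$ or $\hip{2n-i}_\blambda(\omega_{2n-i})$ exactly as you do. Your packaging of the last two cases into a single displayed identity with parameters $(p,q)$ is a nice cosmetic unification, and you make boundedness and properness explicit where the paper leaves them implicit, but the substance is identical.
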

Note that both $S_\blambda$ and $S_\tlambda$ are power hyponormal
$k$-step backward extensions of the weighted shift $S_\blambda\rest{\eld{V}}$
on $\Tree$.
\begin{proof}
The leaflessness of $\Tree\sqlow{k}$ follows from the hyponormality of $S_\blambda$.
In the view of Theorem~\ref{thm:pow-hypo} all we have to prove is
that $\hip{n}_\tlambda(v)\leq 1$ for all $v\in V\sqlow{k}$ and $n\in\N$.
Since $\tilde\lambda_v=\lambda_v$ for $v\in V^\circ=\Deso(\omega)$, we see that
$\hip{n}_\tlambda(v)=\hip{n}_\blambda(v)\leq 1$ for $v\in \Des(\omega)$
and $n\in \N$.
It remains to estimate $\hip{n}_\tlambda(\omega_m)$
for $m=1,\ldots,k$ and $n\in\N$.

Set $\theta = |\tilde\lambda_{\omega_0}|^2=|\lambda_{\omega_{k-1}}|^2$.
Since
\[ 1\geq \hip{1}_\blambda(\omega_{j+2})=\frac{|\lambda_{\omega_{j+1}}|^2}
{|\lambda_{\omega_j}|^2},\quad j=0,\ldots,k-2,\] we conclude that
\begin{equation}\label{eq:theta-min}
|\tilde\lambda_{\omega_m}|^2=\theta\leq|\lambda_{\omega_m}|^2, \quad m=0,\ldots,k-1.
\end{equation}
If $2n\leq m$, then 
\[ \hip{n}_\tlambda(\omega_m)=
\frac{ |\tilde\lambda^{(n)}_{\omega_{m-n}}|^2 }{ |\tilde\lambda^{(n)}_{\omega_{m-2n}}|^2 }=\frac{\theta^n}{\theta^n}=1. \]
When $n\leq m < 2n$ we have
\[ \hip{n}_\tlambda(\omega_m)=
\frac{|\tilde\lambda^{(n)}_{\omega_{m-n}}|^2 }{\|S_\tlambda^n e_{\omega_{m-n}}\|^2 }=
\frac{ |\tilde\lambda^{(n)}_{\omega_{m-n}}|^2 }{ |\tilde\lambda^{(m-n)}_{\omega_0}|^2
\|S_\tlambda^{2n-m} e_{\omega_0}\|^2 }
=\frac{ \theta^{2n-m} }{ \|S_\tlambda^{2n-m} e_{\omega_0}\|^2 }.\]
Since $1\leq 2n-m\leq m$, it follows that
\[ 1\geq \hip{2n-m}_\blambda(\omega_{2n-m})=
\frac{ |\lambda^{(2n-m)}_{\omega_0}|^2 }{\|S_\blambda^{2n-m} e_{\omega_0}\|^2 }
\refoneq[\geq]{eq:theta-min}
\frac{ \theta^{2n-m} }{ \|S_\tlambda^{2n-m} e_{\omega_0}\|^2 }
=\hip{n}_\tlambda(\omega_m) .\]
We are left with the case when $m<n$, but then
\begin{align*}
\hip{n}_\tlambda(\omega_m)&\spacedeq{1ex}
\sum_{v\in\Chif[n](\omega_m)}
\frac{|\tilde\lambda^{(n)}_v|^2}{
	\|S_\tlambda^{n} e_v\|^2} 
=\sum_{v\in\Chif[n-m](\omega_0)}\!
\frac{ |\tilde\lambda^{(m)}_{\omega_0}|^2 |\tilde\lambda^{(n-m)}_v|^2 }%
{\|S_\tlambda^n e_v\|^2 }\\
&\refoneq[\leq]{eq:theta-min} \sum_{v\in\Chif[n-m](\omega_0)}\!\!
\frac{ |\lambda^{(m)}_{\omega_0}|^2 |\lambda^{(n-m)}_v|^2 }%
{\|S_\blambda^n e_v\|^2 }
=\hip{n}_\blambda(\omega_m)\leq1.
\end{align*}
This completes the proof of the power hyponormality of $S_\tlambda$.
\end{proof}

The following characterisation of weighted shifts admitting power hyponormal
backward extension holds.

\begin{Lem}\label{lem:powhyp-ext}
Let $S_\blambda$ be a nonzero bounded and power hyponormal proper weighted shift
on a directed tree $\Tree=(V,\parf)$ with a root $\omega$ and
$k$ be a positive integer.
Then \NWSR
\begin{abece}
\item $S_\blambda$ admits power hyponormal $k$-step backward extension,
\item there exists $C>0$ such that
\begin{equation}\label{eq:powhyp-kext}
\sum_{u\in\Chif[n](\omega)}\frac{|\lambda^{(n)}_u |^2}{
\|S_\blambda^{n+m} e_u\|^2} \leq C,\quad n\in\ZP,\ 1\leq m\leq k.
\end{equation}
\end{abece}
\end{Lem}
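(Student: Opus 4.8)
$ $The plan is to characterize when a power hyponormal weighted shift admits a power hyponormal $k$-step backward extension by applying Theorem~\ref{thm:pow-hypo} to the extended shift $S_{\blambda'}$ on $\Tree\sqlow{k}$ and translating the conditions $\hip{n}_{\blambda'}(v)\leq 1$ for the new vertices $\omega_0,\ldots,\omega_{k-1}$ into a boundedness requirement on the weights of the backward tail. By the preceding lemma, it suffices to consider backward extensions with a constant tail, i.e.\ all new weights equal to a single parameter $\theta=|\lambda'_{\omega_j}|^2$ (for $j=0,\ldots,k-1$), so the existence of \emph{some} power hyponormal $k$-step extension is equivalent to the existence of a single number $\theta>0$ making the relevant inequalities hold. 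The $\Cls$ condition $\hip{n}_{\blambda'}(v)\leq 1$ is automatic for $v\in\Des(\omega)$ since the weights there are unchanged, so the content lies entirely in the inequalities indexed by the new vertices.

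\smallskip

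For the implication (a)$\Rightarrow$(b) I would take a power hyponormal $k$-step extension $S_{\blambda'}$ and examine the conditions $\hip{n}_{\blambda'}(\omega_k)\leq 1$ for $n\in\N$ (the ``deepest'' new vertex, whose $n$-th children run through $\Chif[n-k](\omega)$ once $n>k$, i.e.\ through the descendants of the original root). Writing out $\hip{n}_{\blambda'}(\omega_k)$ via formula \eqref{eq:Sdok} and \eqref{eq:k-hypo}, the factor coming from the constant tail contributes $\theta^{k}$ in the numerator and $\theta$-powers in $\|S_{\blambda'}^n e_{\cdot}\|^2$ in the denominator; after cancelling the tail contribution and setting $n=N+k$ so that $\Chif[n](\omega_k)=\Chif[N](\omega)$, one is left with a sum of the form $\sum_{u\in\Chif[N](\omega)}|\lambda^{(N)}_u|^2/\|S_\blambda^{N+j}e_u\|^2$ for the appropriate shift $j$ between $1$ and $k$, bounded by a constant depending only on $\theta$ and $k$. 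This yields \eqref{eq:powhyp-kext} with a suitable $C$; the various shifts $m$ with $1\leq m\leq k$ arise by considering the vertices $\omega_k$ together with the intermediate constraint relating $\|S_{\blambda'}^{N+k}e_u\|^2$ to $\|S_\blambda^{N+m}e_u\|^2$.

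\smallskip

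For the converse (b)$\Rightarrow$(a) I would construct the extension explicitly with a constant tail and verify the hypotheses of Theorem~\ref{thm:pow-hypo}. Leaflessness of $\Tree\sqlow{k}$ is immediate (every $\omega_j$ has a child and the original tree is leafless by the power hyponormality of $S_\blambda$). To verify \eqref{eq:k-hypo} for the new vertices, I would choose $\theta>0$ small enough that the tail weights do not spoil the inequalities: for the vertices $\omega_m$ with $m<k$ the nearby constraints $\hip{1}_{\blambda'}(\omega_{j})\leq 1$ force monotonicity of the tail which a constant tail satisfies with equality, while the constraints $\hip{n}_{\blambda'}(\omega_k)\leq 1$ reduce, after the same cancellation as above, exactly to inequalities controlled by the bound $C$ from \eqref{eq:powhyp-kext}. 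The key algebraic point is that $\|S_{\blambda'}^{n}e_{\omega_k}\|^2$ splits as a product $\theta^{j}\|S_\blambda^{n-j}e_\omega\|^2$-type expression for the appropriate offset, so the $\theta$-dependence is explicit and choosing $\theta$ sufficiently small (or matching $\theta\leq \inf\|S_\blambda e_v\|^2$) makes every inequality hold simultaneously.

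\smallskip

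The main obstacle I expect is bookkeeping the denominators $\|S_{\blambda'}^{n}e_u\|^2$ correctly as $n$ ranges over all positive integers and $u$ over the new and old vertices, because a child $u\in\Chif[n](\omega_k)$ has some ancestors among the constant-tail vertices and some among the original tree, so $\|S_{\blambda'}^n e_u\|^2$ mixes tail factors $\theta$ with genuine shift norms $\|S_\blambda^{\ell}e_\cdot\|^2$; getting the precise split that separates the $\theta$-power from the surviving sum $\sum|\lambda^{(N)}_u|^2/\|S_\blambda^{N+m}e_u\|^2$, and confirming that exactly the offsets $m\in\{1,\ldots,k\}$ appear, is the delicate part. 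The uniformity in $n$ (a single constant $C$) is what makes \eqref{eq:powhyp-kext} the right condition, and verifying that the finitely many offsets suffice — rather than needing a separate bound for every $n$ — relies on the cancellation of the tail contribution being independent of $n$.
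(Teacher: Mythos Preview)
Your plan is correct and matches the paper's proof: reduce to a constant tail via the preceding lemma, then translate the conditions $\hip{n}_{\blambda'}(\omega_m)\leq 1$ for the new vertices into \eqref{eq:powhyp-kext}, and for the converse pick $\theta$ small and verify Theorem~\ref{thm:pow-hypo} by cases. Two small points of execution: for (a)$\Rightarrow$(b) the paper reads off each offset $m\in\{1,\dots,k\}$ directly from $\hip{n+m}_{\tlambda}(\omega_m)\leq 1$ (rather than only $\omega_k$), which avoids the extra norm comparison you allude to; and for (b)$\Rightarrow$(a) you also need $\theta^m\leq\|S_\blambda^m e_\omega\|^2$ for $1\leq m\leq k$ to handle the intermediate range $n\leq m<2n$, not just smallness relative to $C$ --- but ``$\theta$ sufficiently small'' covers this since those norms are strictly positive.
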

Recall that from properness and power hyponormality of $S_\blambda$ we
conclude that $\|S_\blambda^n e_u\|^2>0$ for all $n\in\ZP$ and $u\in V$.
\begin{proof}\pImp{a}{b}
Assume that the weighted shift $S_\tlambda$ on $\Tree\sqlow{k}$ is a power
hyponormal $k$-step backward extension of $S_\blambda$
(cf.\ Definition~\ref{dfn:backward-ext-shift}).

Denote $\theta = |\tilde\lambda_{\omega_0}|^2$. It is strictly positive,
since we do not accept zero weights for backward extensions.
In view of the previous lemma, we can assume that
$\tilde\lambda_{\omega_m}=\tilde\lambda_{\omega_0}$ for $m=1,\ldots,k-1$.

Since $S_\tlambda$ is power hyponormal, by Theorem~\ref{thm:pow-hypo} we have
\begin{align*}
1\geq \hip{n+m}_\tlambda(\omega_m)=
\sum_{u\in\Chif[n+m](\omega_m)}
\frac{|\tilde\lambda^{(n+m)}_u|^2}{
	\|S_\tlambda^{n+m} e_u\|^2} &
=\sum_{u\in\Chif[n](\omega_0)}
\frac{ \theta^m |\tilde\lambda^{(n)}_u|^2 }{
\|S_\tlambda^{n+m} e_u\|^2 }
\end{align*}
for $n\in\ZP$ and $1\leq m\leq k$. Consequently
\[ \sum_{u\in\Chif[n](\omega_0)}
\frac{ |\tilde\lambda^{(n)}_u|^2 }{
\|S_\tlambda^{n+m} e_u\|^2 } \leq \frac{1}{\theta^m},\quad
n\in\ZP,\ 1\leq m\leq k.\]
Hence, setting $C=\max\{1, 1/\theta^k\}$
is sufficient for (b).

\pImp{b}{a}
We can find a constant $\theta\in (0,1)$ such that
\begin{equation} \label{eq:theta-n}
\frac{1}{\theta^m} \geq \sum_{u\in\Chif[n](\omega)}
\frac{|\lambda^{(n)}_u|^2}{\|S_\blambda^{n+m} e_u\|^2},
\quad n\in\ZP,\ 1\leq m\leq k
\end{equation} and also
\begin{equation} \label{eq:theta-m}
\|S_\blambda^m e_\omega\|^2 \geq \theta^m, \quad 1\leq m\leq k.
\end{equation}

Using this constant we set $\tilde\lambda_{\omega_m}:=\sqrt{\theta}$
for $m=0,\ldots,k-1$, $\tilde\lambda_{\omega_k}:=0$ and
$\tilde\lambda_v:=\lambda_v$ for $v\in V^\circ$. We claim
that the weighted shift $S_\tlambda$ with weights
$\tlambda=\{\tilde\lambda_v\}_{v\in V\sqlow{k}}$ is power
hyponormal.

Clearly, $\hip{n}_\tlambda (v)=\hip{n}_\blambda (v)\leq 1$ for all $v\in V$
and $n\in\N$. Fix $1\leq m\leq k$.
If $2n\leq m$, then
$\hip{n}_\tlambda (\omega_m) = \frac{\theta^n}{\theta^n}=1$.

In the case $n\leq m <2n$ we have
\begin{align*}
\hip{n}_\tlambda(\omega_m)=
\frac{| \tilde\lambda^{(n)}_{\omega_{m-n}} |^2}{\|S_\tlambda^n e_{\omega_{m-n}}\|^2 }
&=\frac{ \theta^n }{\theta^{m-n} \|S_\tlambda^{2n-m} e_{\omega_0}\|^2 }\\
&=\frac{ \theta^{2n-m} }{\|S_\tlambda^{2n-m} e_{\omega_0}\|^2 }
\refoneq[\leq]{eq:theta-m} 1.
\end{align*}
We are left with the case $m<n$. Then
\begin{align*}
\hip{n}_\tlambda(\omega_m) &=
\sum_{u\in\Chif[n](\omega_m)}
\frac{ |\tilde\lambda^{(n)}_u |^2}{\|S_\tlambda^n e_u\|^2 }\\
&=\sum_{u\in\Chif[n-m](\omega_0)}
\frac{\theta^m |\tilde\lambda^{(n-m)}_u|^2}{
\|S_\tlambda^n e_u\|^2 }
\refoneq[\leq]{eq:theta-n} 1.
\end{align*}
Since, by Theorem~\ref{thm:pow-hypo}, $\Tree$ is leafless and by
$S_\blambda\neq 0$ it is non-degenerate, $\Tree\sqlow{k}$ is leafless.
By the same theorem the weighted shift $S_\tlambda$ on $\Tree\sqlow{k}$
is a power hyponormal $k$-step backward extension of $S_\blambda$ so (a) is proven.
\end{proof}

From the above characterisation we can deduce the following fact
about backward extensions of power hyponormal weighted shifts.

\begin{Cor}\label{cor:powhypo-below}
If a bounded power hyponormal proper weighted shift on a rooted directed tree
is bounded from below, then it admits a power hyponormal $k$-step backward
extension for every $k\in\N$.
\end{Cor}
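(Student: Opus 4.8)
The plan is to apply Lemma~\ref{lem:powhyp-ext} directly, so the whole task reduces to verifying that condition \eqref{eq:powhyp-kext} holds whenever $S_\blambda$ is bounded below. First I would recall what bounded below means for a weighted shift: by Proposition~\ref{pro:shift-basic}~\eqref{shbs:below} there is a constant $\delta>0$ with $\sum_{u\in\Chif(v)}|\lambda_u|^2\geq\delta$ for every $v\in V$, equivalently $\|S_\blambda e_v\|^2\geq\delta$ for all $v$. Using Corollary~\ref{cor:szift-ort} together with the power formula \eqref{eq:Sdok}, this propagates to the iterates: for any $n\geq 1$ and any $u$,
\[
\|S_\blambda^{n}e_u\|^2=\sum_{w\in\Chif[n](u)}|\lambda_w^{(n)}|^2
=\sum_{w\in\Chif[n](u)}|\lambda_w^{(n-1)}|^2\,|\lambda_w|^2\cdot\frac{\|S_\blambda e_{\parf(w)}\|^2}{\|S_\blambda e_{\parf(w)}\|^2},
\]
and more cleanly, iterating the one-step bound gives $\|S_\blambda^{n}e_u\|^2\geq\delta^{n}$ for every $u\in V$ and $n\in\ZP$ (with the convention $\|S_\blambda^0 e_u\|^2=1$). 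This lower bound on the denominators is the key quantitative input.

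Next I would control the numerators of \eqref{eq:powhyp-kext} from above. The sum $\sum_{u\in\Chif[n](\omega)}|\lambda_u^{(n)}|^2$ is exactly $\|S_\blambda^n e_\omega\|^2$ by \eqref{eq:Sdok} and Corollary~\ref{cor:szift-ort}, hence it is bounded by $\|S_\blambda\|^{2n}$. Writing $M:=\|S_\blambda\|$, the left-hand side of \eqref{eq:powhyp-kext} is therefore at most
\[
\sum_{u\in\Chif[n](\omega)}\frac{|\lambda_u^{(n)}|^2}{\|S_\blambda^{n+m}e_u\|^2}
\leq\frac{1}{\delta^{\,n+m}}\sum_{u\in\Chif[n](\omega)}|\lambda_u^{(n)}|^2
\leq\frac{M^{2n}}{\delta^{\,n+m}}.
\]
This is uniform in $u$ but still grows in $n$ unless $M^2\leq\delta$, so a naive estimate is not quite enough; the main obstacle is obtaining a bound independent of $n$.

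The resolution is to exploit power hyponormality itself, which forces the ratios to behave well. By Theorem~\ref{thm:pow-hypo}, $\hip{n}(\omega)=\sum_{u\in\Chif[n](\omega)}|\lambda_u^{(n)}|^2/\|S_\blambda^n e_u\|^2\leq 1$ for all $n$. Thus I would split $\|S_\blambda^{n+m}e_u\|^2=\|S_\blambda^m(S_\blambda^n e_u/\|\cdots\|)\|^2$-style reasoning, but more directly: since $\|S_\blambda^{n+m}e_u\|^2\geq\delta^m\|S_\blambda^n e_u\|^2$ (applying the bounded-below estimate to the last $m$ steps starting from the $n$-th children, via Corollary~\ref{cor:szift-ort} and the one-step bound $\|S_\blambda^{m}e_w\|^2\geq\delta^m$ for $w\in\Chif[n](\omega)$), I get
\[
\sum_{u\in\Chif[n](\omega)}\frac{|\lambda_u^{(n)}|^2}{\|S_\blambda^{n+m}e_u\|^2}
\leq\frac{1}{\delta^m}\sum_{u\in\Chif[n](\omega)}\frac{|\lambda_u^{(n)}|^2}{\|S_\blambda^{n}e_u\|^2}
=\frac{\hip{n}(\omega)}{\delta^m}\leq\frac{1}{\delta^m}\leq\frac{1}{\min\{1,\delta\}^{\,k}}=:C.
\]
This bound is uniform in $n\in\ZP$ and in $1\leq m\leq k$, so \eqref{eq:powhyp-kext} holds with this $C$. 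By Lemma~\ref{lem:powhyp-ext}, $S_\blambda$ admits a power hyponormal $k$-step backward extension for every $k\in\N$, completing the proof. I expect the only delicate point to be justifying $\|S_\blambda^{n+m}e_u\|^2\geq\delta^m\|S_\blambda^n e_u\|^2$ cleanly, which follows by applying the one-step lower bound to each vertex appearing in the expansion of $S_\blambda^{n+m}e_u$ via Lemma~\ref{lem:tree-basics}~\eqref{tbs:dzieci} and Corollary~\ref{cor:szift-ort}.
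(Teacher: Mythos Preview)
Your argument is correct and is essentially the same as the paper's: both use $\|S_\blambda^{n+m}e_u\|^2\geq\delta^m\|S_\blambda^{n}e_u\|^2$ together with $\hip{n}(\omega)\leq 1$ to bound the sum in \eqref{eq:powhyp-kext} by $\delta^{-m}\leq\min\{1,\delta\}^{-k}$, and then invoke Lemma~\ref{lem:powhyp-ext}. The only cosmetic differences are that the paper normalizes the lower bound constant to lie in $(0,1]$ and treats the case $n=0$ separately, whereas you absorb it into the general estimate via $\hip{0}(\omega)=1$.
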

\begin{proof}
Denote the considered weighted shift on a rooted directed tree $\Tree=(V,\parf)$
by $S_\blambda$.
Let $C\in(0,1]$ be such that
\[ \|S_\blambda f\|^2 \geq C\|f\|^2,\quad
f\in\eld{V}.\]
If $n\in \N$ and $m=1,\ldots,k$, then by Theorem~\ref{thm:pow-hypo},
\[
\sum_{u\in\Chif[n](\omega)}\frac{|\lambda^{(n)}_u|^2}{
\|S_\blambda^{n+m} e_u\|^2}\leq 
\sum_{u\in\Chif[n](\omega)}\frac{|\lambda^{(n)}_u|^2}{
\|S_\blambda^{n} e_u\|^2 C^m}\leq \frac{\hip{n}(\omega)}{C^m} \leq C^{-k}.
\]
For $n=0$, we obtain
\[ \sum_{u\in\Chif[0](\omega)}\frac{|\lambda^{(0)}_u|^2}{
\|S_\blambda^{m} e_u\|^2}=
\frac{1}{\|S_\blambda^{m} e_\omega\|^2 }\leq \frac{1}{C^m} \leq C^{-k}. \]
Applying Lemma~\ref{lem:powhyp-ext} finishes the proof.
\end{proof}

Now we are ready to prove that the class of power hyponormal weighted shifts
satisfies the joint backward extension property (with $c_k=1$).
For the reader's convenience, statement of the theorem contains details
of the JBEP. See Definition~\ref{dfn:rooted-sum}
for the notion of rooted sum of directed trees.

%---------- własność łącznego rozszerzania dla klasy potęgowo hiponormalnych --
\begin{Thm}\label{thm:powhyp-JBEP}
Let $J$ be a non-empty at most countable set and $k\in\ZP$.
Suppose that for $j\in J$, $S_j$ is a bounded proper weighted shift on a
directed tree $\Tree_j=(V_j,\parf_j)$ with root $\omega_j$. Then \NWSR
\begin{abece}
\item there exists a system $\{ \theta_j \}_{j\in J}\subseteq \C\setminus\{0\}$
such that the weighted shift $S_\blambda$ on
$\rootsum_{j\in J}\Tree_j=(V_J,\parf)$
with weights $\blambda =\{\lambda_v\}_{v\in V_J}$ satisfying
\begin{equation}\label{eq:weight-rest1}
\lambda_{\omega_j}=\theta_j \text{ and } S_{\blambda^\to_{\omega_j}} = S_j
\text{ for all } j\in J,
\end{equation}
is bounded and admits a power hyponormal $k$-step backward extension,

\item $S_j$ admits a power hyponormal $(k+1)$-step backward extension for
each $j\in J$ and $\sup\{\|S_j\|\colon j\in J\} < \infty$.
\end{abece}
Moreover, if $S_\blambda$ is a bounded proper weighted shift on
$\rootsum_{j\in J}\Tree_j$ that admits a power hyponormal $k$-step
backward extension, then $\|S_\blambda\|=\sup\{\|S_j\|\colon j\in J\}$.
\end{Thm}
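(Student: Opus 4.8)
The theorem has the shape of the JBEP for power hyponormal shifts. The plan is to prove the equivalence (a)$\Leftrightarrow$(b) using the single-number characterisation from Lemma~\ref{lem:powhyp-ext}, and then to prove the norm equality separately using the tensor-like orthogonality of powers from Corollary~\ref{cor:szift-ort}. The key simplifying observation is that over the rooted sum $\rootsum_{j\in J}\Tree_j$ the set $\Chif[n](\omega)$ splits along the branches: $\Chif[n](\omega)=\bigsqcup_{j\in J}\Chif[n-1]_{\Tree_j}(\omega_j)$ for $n\geq 1$ (by Lemma~\ref{lem:tree-basics}~\eqref{tbs:dzieci} together with $\Chif(\omega)=\{\omega_j\colon j\in J\}$), so every sum indexed by $\Chif[n](\omega)$ decouples into a $\theta_j$-weighted sum of the corresponding quantities for the individual shifts $S_j$.

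First I would establish the norm statement, as it also forces the correct normalisation. For a bounded proper weighted shift $S_\blambda$ on $\rootsum_{j\in J}\Tree_j$ admitting a power hyponormal $k$-step backward extension, power hyponormality (Theorem~\ref{thm:pow-hypo} with $n=1$, $v=\omega$) gives $\sum_{j\in J}|\theta_j|^2/\|S_{j}e_{\omega_j}\|^2=\hip{1}(\omega)\le 1$, so each $|\theta_j|^2\le\|S_j e_{\omega_j}\|^2\le\|S_j\|^2$. Since on $\eld{\Des(\omega_j)}$ the shift $S_\blambda$ restricts to $S_j$, we have $\|S_j\|\le\|S_\blambda\|$; conversely, by Proposition~\ref{pro:shift-basic}~\eqref{shbs:bounded} the norm is the supremum of $\|S_\blambda e_v\|^2=\sum_{u\in\Chif(v)}|\lambda_u|^2$ over $v\in V_J$, and for $v=\omega$ this is $\sum_j|\theta_j|^2\le\sup_j\|S_j\|^2$ while for $v\ne\omega$ it equals the corresponding norm inside some $S_j$. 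Hence $\|S_\blambda\|=\sup_j\|S_j\|$.

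For (a)$\Rightarrow$(b) I would fix $j$, restrict $S_\blambda$ to the invariant subspace $\eld{\Des(\omega_j)}$ (Proposition~\ref{pro:shift-basic}~\eqref{shbs:Des-inv}), note that this restriction equals $S_j$ and that $S_\blambda$ itself is a power hyponormal $k$-step backward extension of $S_{\blambda^\to_{\omega_j}}$, then prepend one further step: the weighted shift on $\rootsum_{j}\Tree_j$ together with the $k$ backward vertices is a power hyponormal $(k+1)$-step backward extension of $S_j$. Uniform boundedness follows from the norm equality. For the converse (b)$\Rightarrow$(a), I would apply Lemma~\ref{lem:powhyp-ext} to each $S_j$ (with $k+1$ in place of $k$) to obtain a common constant $C$ with $\sum_{u\in\Chif[n]_{\Tree_j}(\omega_j)}|\lambda_u^{(n)}|^2/\|S_j^{n+m}e_u\|^2\le C$ for $n\in\ZP$, $1\le m\le k+1$, then choose the new weights $\theta_j$ with $|\theta_j|^2$ small and summable (e.g.\ $\sum_j|\theta_j|^2\le 1$, with $|\theta_j|^2\le\|S_j e_{\omega_j}\|^2$) so that, using the branch-splitting identity, the quantities $\hip{n}(\omega)$ and the estimate \eqref{eq:powhyp-kext} for the assembled shift $S_\blambda$ on the rooted sum stay bounded, after which Lemma~\ref{lem:powhyp-ext} yields the $k$-step backward extension.

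The main obstacle is the bookkeeping in the converse direction: I must verify that $S_\blambda$ is genuinely power hyponormal at the root and at each backward vertex, i.e.\ that all the numbers $\hip{n}(\omega)$ are $\le 1$ and that the backward-extension estimate \eqref{eq:powhyp-kext} for the rooted sum holds with a finite constant. Both reduce, via the branch splitting, to $\theta_j$-weighted combinations of the per-$j$ data, and the delicate point is that passing from $(k+1)$-step extensions of the $S_j$ to a $k$-step extension of $S_\blambda$ requires the extra step absorbed by the new root $\omega$; choosing $\{|\theta_j|^2\}$ summable with a small enough total handles the normalisation so that the supremum defining $\hip{n}(\omega)$ and the constant in \eqref{eq:powhyp-kext} remain under control, giving $c_k=1$.
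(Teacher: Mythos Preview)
Your norm argument and the branch-splitting identity are correct, and routing both implications through Lemma~\ref{lem:powhyp-ext} is the right overall strategy. However, each implication, as you sketch it, has a genuine gap.

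For (a)$\Rightarrow$(b) you assert that the $k$-step backward extension of $S_\blambda$ on $\bigl(\rootsum_{j\in J}\Tree_j\bigr)\sqlow{k}$ is itself a power hyponormal $(k+1)$-step backward extension of $S_j$. This is false at the level of Definition~\ref{dfn:backward-ext-shift}: a $(k+1)$-step backward extension of $S_j$ must live on $(\Tree_j)\sqlow{k+1}$, which is $\Tree_j$ with a \emph{path} of $k+1$ new vertices prepended, whereas in $\bigl(\rootsum_{j\in J}\Tree_j\bigr)\sqlow{k}$ the vertex $\omega$ has \emph{all} $\omega_i$, $i\in J$, as children. The subspace $\eld{\{\text{the }k\text{ added vertices}\}\cup\{\omega\}\cup V_j}$ is not invariant, since $S_{\blambda'}e_\omega=\sum_{i\in J}\theta_i e_{\omega_i}$ has components in every branch; so no restriction argument is available. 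One has to go through the characterisation of Lemma~\ref{lem:powhyp-ext}: starting from the constant $C$ for $S_\blambda$ and applying the branch-splitting to $\Chit{n+1}{\Tree}(\omega)$ (with $m-1$ in place of $m$) isolates the $j$-th summand and gives the estimate \eqref{eq:powhyp-kext} for $S_j$ with constant $C/|\theta_j|^2$. This is precisely what the paper does.

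For (b)$\Rightarrow$(a), Lemma~\ref{lem:powhyp-ext} supplies only \emph{individual} constants $C_j$; you cannot ``obtain a common constant $C$'', because $\sup_j C_j$ need not be finite when $J$ is infinite (e.g.\ $\|S_j e_{\omega_j}\|$ may be arbitrarily small while all hypotheses of (b) hold). Simply making $\sum_j|\theta_j|^2$ small does not force $\hip{n}_\blambda(\omega)\le 1$; what is needed is $\sum_j |\theta_j|^2 C_j\le 1$. The paper therefore picks $a_j=|\theta_j|^2$ so that simultaneously $\sum_j a_j C_j=1$ and $\sum_j a_j C_j^3<\infty$. The exponent~$3$ is not cosmetic: it is forced by the case $n=0$ of \eqref{eq:powhyp-kext}, where one must bound $1/\|S_\blambda^m e_\omega\|^2$, and this is handled via Jensen's inequality for $x\mapsto 1/x$ with respect to the probability weights $a_jC_j$. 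Your sketch (``small enough total'') does not reach this point.
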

\begin{proof} Set $\Tree=\rootsum_{j\in J}\Tree_j$.
\pImp{a}{b}
The uniform boundedness part of (b) is obvious.
According to Lemma~\ref{lem:powhyp-ext} we have to prove that for every
$j\in J$, there exists a constant $C_j>0$ such that
%(cf.\ \eqref{eq:lambda-star})
\[ \sum_{u\in\Chit{n}{\Tree_j}(\omega_j)}\frac{\|S_j^{*n} e_u\|^2}{
\|S_j^{n+m} e_u\|^2} \leq C_j,\quad n\in\ZP,\ 1\leq m\leq k+1.\]

Fix $j_0\in J$. Note that assuming $C\geq 1$, the inequality \eqref{eq:powhyp-kext}
is satisfied also for $m=0$, since $\hip{n}(\omega)\leq 1\leq C$.
By the assumption on $S_\blambda$ and Lemma~\ref{lem:powhyp-ext} we have
\begin{align*}
C&\geq \sum_{u\in\Chit{n+1}{\Tree}(\omega)}\frac{|\lambda^{(n+1)}_u|^2}{
\|S_\blambda^{n+1+(m-1)} e_u\|^2} \\
&=\sum_{j\in J} \sum_{u\in\Chit{n}{\Tree_j}(\omega_j)}
\frac{|\theta_j|^2 |\lambda^{(n)}_u|^2}{
\|S_\blambda^{n+m} e_u\|^2}\\
&\geq |\theta_{j_0}|^2\!\! \sum_{u\in\Chit{n}{\Tree_{j_0}}(\omega_{j_0})}
\frac{|\lambda^{(n)}_u|^2}{\|S_\blambda^{n+m} e_u\|^2},\quad
1\leq m\leq k+1,\ n\in \ZP.
\end{align*}
This leads to
\[ \sum_{u\in\Chit{n}{\Tree_{j_0}}(\omega_{j_0})}\frac{\|S_{j_0}^{*n} e_u\|^2}{
\|S_{j_0}^{n+m} e_u\|^2} =\sum_{u\in\Chit{n}{\Tree_{j_0}}(\omega_{j_0})}
\frac{|\lambda^{(n)}_u|^2}{\|S_\blambda^{n+m} e_u\|^2}
\leq \frac{C}{|\theta_{j_0}|^2} =:C_{j_0} \]
for $m=1,\ldots,k+1$ and $n\in \ZP$, which completes the proof of (b).

\pImp{b}{a}
Since operators in question cannot be zero, we may and do assume that
$\sup_{j\in J}\|S_j\| = 1$.

It follows from Lemma~\ref{lem:powhyp-ext} that for every $j\in J$ there exists
a constant $C_j\geq 1$ such that
\begin{equation}\label{eq:Sj-hskos}
\sum_{u\in\Chit{n}{\Tree_j}(\omega_j)}\frac{\|S_j^{*n} e_u\|^2}{
\|S_j^{n+m} e_u\|^2} \leq C_j,\quad n\in\ZP,\ 1\leq m\leq k+1.
\end{equation} 
Pick a system of positive real numbers $\{a_j\}_{j\in J}$ such that
\[
\mathrm{(*)}\ \sum_{j\in J} a_j C_j^3 <\infty,
\qquad \mathrm{(**)}\  \sum_{j\in J} a_j C_j=1. \]
Indeed, as $J$ is at most countable, there exist positive real numbers
$\{a'_j\}_{j\in J}$ such that $\sum_{j\in J} a_j'=1$. Then, by setting $a_j:=a_j'\min\{1,C_j^{-3}\}$ for $j\in J\setminus\{j_0\}$,
where $j_0\in J$ is fixed, and defining \[ a_{j_0}:=
\biggl(1-\sum_{j\in J\setminus\{j_0\}} a_j C_j\biggr)/C_{j_0}, \]
we obtain ($*$) and ($**$).

Take $\theta_j$'s such that $|\theta_j|^2=a_j$ for all $j\in J$ and
define $\blambda =\{\lambda_v\}_{v\in V_J}$ by \eqref{eq:weight-rest1}.
We claim that this system meets the condition (a). Clearly
\[ \sum_{u\in\Chif(\omega)}|\lambda_u|^2=\sum_{j\in J} a_j
\leq \sum_{j\in J} a_j C_j \stackrel{(**)}{=}1.\]
According to Proposition~\ref{pro:shift-basic}~\eqref{shbs:bounded}, this
together with uniform boundedness of $S_j$'s proves that $S_\blambda$ is bounded.

Let $n$ be a positive integer. Then
\begin{align*}
\hip{n}_\blambda(\omega) &\spacedeq{1ex} 
\sum_{u\in\Chit{n}{\Tree}(\omega)}\frac{\|S_\blambda^{*n} e_u\|^2}{\|S_\blambda^n e_u\|^2}\\
%&\spacedeq{1ex} \sum_{j\in J} \sum_{u\in\Chit{n-1}{\Tree}(\omega_j)}\frac{\|S_\blambda^{*n} e_u\|^2}{\|S_\blambda^n e_u\|^2}\\
&\spacedeq{1ex} \sum_{j\in J} a_j \sum_{u\in\Chit{n-1}{\Tree_j}(\omega_j)}\frac{\|S_\blambda^{*n-1} e_u\|^2}{\|S_\blambda^n e_u\|^2}\\
&\refoneq[\leq]{eq:Sj-hskos} \sum_{j\in J} a_j C_j = 1.
\end{align*}
Since all $S_j$'s are power hyponormal, we infer from Theorem~\ref{thm:pow-hypo}
that $S_\blambda$ is power hyponormal. If $k=0$, then we are done.

In the case $k\geq 1$, according to Lemma~\ref{lem:powhyp-ext}
it suffices to prove that there exists a constant $C>0$ satisfying 
\eqref{eq:powhyp-kext}.
We will consider two possible cases.

Case 1. $n=0$.

First, observe that for $j\in J$,
\begin{equation}\label{eq:Cj/norma}
 \frac{C_j}{\|S_j^{m} e_{\omega_j}\|^2}=
\sum_{u\in\Chif[0](\omega_j)}\frac{C_j\|S_j^{*0} e_u\|^2}{\|S_j^{m} e_u\|^2}\refoneq[\leq]{eq:Sj-hskos} C_j^2,\quad m=0,\ldots,k.
\end{equation}
Hence, for $m\in\{1,\ldots,k\}$,
\begin{align*}
\sum_{u\in\Chif[0](\omega)}
\frac{\|S_\blambda^{*0} e_u\|^2}{\|S_\blambda^{m} e_u\|^2}
=\frac{1}{\|S_\blambda^m e_\omega\|^2}
&\spacedeq{.9ex} \frac{1}{\sum_{j\in J} a_j \|S_j^{m-1} e_{\omega_j}\|^2}\\
= \frac{1}{\sum_{j\in J} a_j C_j \frac{\|S_j^{m-1} e_{\omega_j}\|^2}{C_j}}
&\spacedeq[\stackrel{(\dagger)}{\leq}]{.9ex} \sum_{j\in J} a_j C_j \frac{C_j}{\|S_j^{m-1} e_{\omega_j}\|^2}\\
&\refoneq[\leq]{eq:Cj/norma} \sum_{j\in J} a_j C_j^3 \stackrel{(*)}{<} \infty,
\end{align*}
where ($\dagger$) follows from ($**$) and the Jensen inequality applied to
the convex function $x\mapsto\frac{1}{x}$.

Case 2. $n\in \N$.

For $m\in\{1,\ldots,k\}$, we have
\begin{align*}
\sum_{u\in\Chit{n}{\Tree}(\omega)}\frac{\|S_\blambda^{*n} e_u\|^2}{\|S_\blambda^{n+m} e_u\|^2}
&\spacedeq{.9ex} \sum_{j\in J} \sum_{u\in\Chit{n-1}{\Tree}(\omega_j)}\frac{\|S_\blambda^{*n} e_u\|^2}{\|S_\blambda^{n+m} e_u\|^2}\\
&\spacedeq{.9ex} \sum_{j\in J} a_j \sum_{u\in\Chit{n-1}{\Tree_j}(\omega_j)}
\frac{\|S_j^{*(n-1)} e_u\|^2}{\|S_j^{n+m} e_u\|^2}\\
&\refoneq[\leq]{eq:Cj/norma} \sum_{j\in J} a_j C_j \stackrel{(**)}{=} 1.
\end{align*}
Together with the previous estimation we obtain that the condition (b) in
Lemma~\ref{lem:powhyp-ext} is satisfied with constant
$C:=\sum_{j\in J} a_j C_j^3 \geq 1$.
Hence $S_\blambda$ admits a power hyponormal $k$-step backward extension.

For the ``moreover'' part observe that \[
\|S_\blambda^* (S_\blambda e_w) \|^2
= \Bigl\| \sum_{j\in J} \lambda_{\omega_j} S_\blambda^* e_{\omega_j}\Bigr \|^2
\refoneq{eq:S*dok} \Bigl\| \sum_{j\in J} |\lambda_{\omega_j}|^2 e_w \Bigr \|^2
= \|S_\blambda e_w\|^4,
\]
and
\begin{align*}
 \|S_\blambda (S_\blambda e_w) \|^2
&\refoneq{eq:normasumy} \sum_{j\in J} |\lambda_{\omega_j}|^2
\| S_\blambda e_{\omega_j} \|^2
\leq \sum_{j\in J} |\lambda_{\omega_j}|^2 \| S_j \|^2\\
&\spacedeq[\leq]{1ex} \| S_\blambda e_w \|^2 \sup \bigl\{\|S_j\|^2\colon
j\in J\bigr\}.
\end{align*}
By hyponormality we have $\|S_\blambda^* (S_\blambda e_w) \|^2
\leq \|S_\blambda (S_\blambda e_w) \|^2$, and consequently
$\|S_\blambda\|\leq \sup\{\|S_j\|\colon j\in J\}$.
\end{proof}

As stated in Theorem~\ref{thm:joint-extension}, the joint backward extension
property applies not only to extensions to weighted shifts onto rooted sums.

\begin{Rem}\label{rem:blank-powehypo}
According to \cite[Proposition~3.9]{pikul}, if some
countable leafless directed subtree $\Tree_{(v\to)}$ is given without predefined
weights, we can set them in such a way that the associated weighted shift
admits isometric (subnormal, power hyponormal) $k$-step backward extension,
and hence such ``blank directed tree'' does not affect the existence of
a subnormal (power hyponormal) extension.
\end{Rem}

%..............................................................................
%==============================================================================
\section{Complete hyperexpansivity}\label{sec:che1}
Given an operator $T\in\BO{\Hilb}$ we define
\begin{equation}\label{eq:hyperexA}
A_n(T):=\sum_{j=0}^n (-1)^j \binom{n}{j} T^{*j}T^j,\quad n\geq 0.
\end{equation}
One can observe that $A_{n+1}(T)=A_n(T) - T^* A_n(T)T $ for every $n\in\ZP$ and $A_0(T)=I$.
We call $T\in\BO{\Hilb}$ \emph{completely hyperexpansive} if
$A_n(T)\leq 0$ for all $n\in\N$. The notion was introduced by Athavale in \cite{athav}.
Due to the following result of Agler \cite{agler} the complete hyperexpansivity
is in a sense `dual' to subnormality (and contractivity).
\begin{Thm}[Agler]
An operator $T\in\BO{\Hilb}$ is a subnormal contraction if and only if
$A_n(T)\geq 0$ for every $n\in\N$.
\end{Thm}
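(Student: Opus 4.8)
The plan is to prove the two implications separately, handling the forward direction (a subnormal contraction satisfies $A_n(T)\geq0$) as a routine spectral computation and treating the reverse direction as the substantive part. For the forward direction I would use that a subnormal contraction $T$ admits a normal extension that is itself a contraction: taking $N\in\BO{\mathcal K}$ to be the minimal normal extension, one has $\sigma(N)\subseteq\sigma(T)\subseteq\overline{\mathbb{D}}$, so its spectral measure $E$ is supported in the closed unit disk. For $h\in\Hilb$ and $j\geq0$ then $\|T^jh\|^2=\|N^jh\|^2=\int_{\overline{\mathbb{D}}}|z|^{2j}\ud\scp{E(z)h,h}$. Pushing this forward along $z\mapsto|z|^2$ to a positive measure $\nu_h$ on $[0,1]$ and summing the binomial series inside the integral gives
\[ \scp{A_n(T)h,h}=\int_0^1\sum_{j=0}^n(-1)^j\binom{n}{j}t^j\ud\nu_h(t)=\int_0^1(1-t)^n\ud\nu_h(t)\geq0, \]
because $t\in[0,1]$. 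As $h$ is arbitrary, $A_n(T)\geq0$ for every $n\in\N$.

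For the converse I would argue via a moment problem. Fix $h\in\Hilb$ and set $c_n:=\|T^nh\|^2=\scp{T^{*n}T^nh,h}$. Applying the hypothesis $A_k(T)\geq0$ to the vectors $T^mh$ shows that every finite difference $(-1)^k\Delta^k c_m=\scp{A_k(T)T^mh,T^mh}$ is nonnegative, so $(c_n)$ is completely monotone and bounded by $c_0=\|h\|^2$. By the Hausdorff moment theorem there is a positive Borel measure $\mu_h$ on $[0,1]$ with $c_n=\int_0^1 t^n\ud\mu_h(t)$ and total mass $\|h\|^2$. Polarising the quadratic dependence on $h$ yields complex measures $\mu_{h,g}$ with $\int_0^1 t^n\ud\mu_{h,g}(t)=\scp{T^{*n}T^nh,g}$, and these assemble into a normalised positive operator-valued measure $F$ on $[0,1]$ satisfying $T^{*n}T^n=\int_0^1 t^n\ud F(t)$ for all $n\in\ZP$, i.e.\ the operator form of the Hausdorff moment problem.

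The heart of the argument, and the step I expect to be the main obstacle, is to upgrade this ``radial'' data to an honest normal extension of $T$. Applying the Naimark dilation theorem to $F$ produces a Hilbert space $\mathcal K\supseteq\Hilb$ and a spectral measure $E$ on $[0,1]$ with $F(\cdot)=P_{\Hilb}E(\cdot)\rest{\Hilb}$; then $R:=\int_0^1 s\ud E(s)$ is a positive contraction on $\mathcal K$ with $\scp{R^nh,h}=\|T^nh\|^2$ for $h\in\Hilb$. The trouble is that $F$, and hence $R$, encodes only the moments $\|T^nh\|^2$, that is, information about $T^*T$, whereas a normal extension must additionally reproduce the full action (the ``angular part'') of $T$. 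To close this gap I would realise $\mathcal K$ as a direct integral over $[0,1]$ on which $R$ is multiplication by the variable, and construct the extension $N$ as multiplication by a fibrewise normal field whose modulus is $R^{1/2}$, with the phases chosen consistently so that $\Hilb$ embeds in $\mathcal K$ with $N\rest{\Hilb}=T$ and $\|N\|\leq1$. Showing that such a coherent choice of phases exists is precisely where both the contractivity hypothesis and complete monotonicity are indispensable, and it constitutes the genuinely hard content of Agler's theorem; alternatively, the existence of $N$ can be extracted from Agler's model theory for $\infty$-hypercontractions, obtained as the $n\to\infty$ limit of the Bergman-type models for $n$-hypercontractions.
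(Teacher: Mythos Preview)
The paper does not prove this theorem at all: it is stated with attribution to Agler and the citation \cite{agler}, and the text moves on immediately. So there is no ``paper's own proof'' to compare against; what follows is an assessment of your proposal on its own merits.

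Your forward direction is correct and standard: the minimal normal extension of a subnormal contraction is itself a contraction (via the spectral inclusion $\sigma(N)\subseteq\sigma(T)$), and then the binomial identity $(1-t)^n\geq0$ on $[0,1]$ gives $A_n(T)\geq0$.

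The reverse direction, however, has a genuine gap at exactly the point you flag. The passage from the Hausdorff moment data to a semispectral measure $F$ with $T^{*n}T^n=\int_0^1 t^n\ud F(t)$ and its Naimark dilation $R$ is fine, but the positive contraction $R$ carries no information about $T$ beyond the sequence $T^{*n}T^n$; in particular it knows nothing about how $T$ mixes directions in $\Hilb$. Your proposed fix --- realise the dilation space as a direct integral, let $N$ be fibrewise multiplication by a phase times $\sqrt{s}$, and ``choose the phases consistently'' so that $N\rest{\Hilb}=T$ --- is not a workable plan: the embedding $\Hilb\hookrightarrow\mathcal K$ produced by Naimark dilation has no a priori compatibility with any fibre decomposition, and there is no mechanism in your outline that forces the restriction of a diagonal multiplication operator to equal the given (generally non-normal) $T$. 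This is not merely the ``hard step'' left undone; as written the construction does not make contact with $T$ itself, only with $|T|$. Your alternative --- invoke Agler's model theory for $\infty$-hypercontractions --- is circular, since that model theory is precisely how Agler establishes the theorem in \cite{agler}. If you want a self-contained argument, one route is to use the positivity of all $A_n(T)$ to build a positive-definite kernel on polynomials in $z$ and $\bar z$ (the hereditary functional calculus) and run a GNS-type construction, which is essentially Agler's method; another is to verify the Bram--Halmos criterion directly, but that requires nontrivial work beyond the moment identities you have set up.
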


From the aforementioned recurrence easily follows that every isometry is
a completely hyperexpansive operator, such that
\[ A_n(T)= I-T^*T =0, \quad n\geq 1.\]

It is worth noting that for a completely hyperexpansive operator $T\in\BO{\Hilb}$
also $T^n$ is completely hyperexpansive for all $n\in\N$. Proof of this fact can be found
in \cite[Theorem 2.3]{jab}.

Hyperexpansivity is closely related to the following notion.
\begin{Def}
A sequence $\{a_n\}_{n=0}^\infty$ is called \emph{completely alternating} if
\[ \sum_{j=0}^n (-1)^j\binom{n}{j} a_{m+j} \leq 0,\quad n\in\N,\ m\in\ZP. \]
\end{Def}

Some basic properties of these sequences are listed below. Their elementary
proofs are omitted.
\begin{Pro}\label{pro:alt-seq}
Let $\{a_n\}_{n=0}^\infty$ be a sequence of real numbers. Define
\begin{equation}\label{eq:Amn}
A_m^n:=\sum_{j=0}^n (-1)^j\binom{n}{j} a_{m+j},\qquad m,n\in\ZP.
\end{equation} Then
\begin{abece}
\item $A_m^0 = a_m$ and $A_m^{n+1} = A_m^n - A_{m+1}^n$ for $m,n\in\ZP$,
\item if $\{a_n\}_{n=0}^\infty$ is completely alternating, then so is
$\{a_{n+1}\}_{n=0}^\infty$,
\item if $\{a_n\}_{n=0}^\infty$ is completely alternating, then
the sequence $\{A_m^n\}_{m=0}^\infty$ is monotonically increasing for every $n\in\ZP$
$($in particular $\{a_n\}_{n=0}^\infty$ is monotonically increasing$)$,
\item if $\{a_n\}_{n=0}^\infty$ is completely alternating sequence with positive terms,
then quotients of consecutive terms form a monotonically decreasing
sequence, i.e.\ $\frac{a_{n+1}}{a_n}\geq\frac{a_{n+2}}{a_{n+1}}$ for $n\geq 0$,
\item completely alternating sequences form a convex cone in $\R^\ZP$
closed with respect to pointwise convergence.
\end{abece}
\end{Pro}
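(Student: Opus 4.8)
The plan is to derive everything from the single linear recurrence in (a), treating (b), (c) and (e) as immediate corollaries and reserving the only genuine work for (d).

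First I would prove (a). The equality $A_m^0=a_m$ is just the $n=0$ case of \eqref{eq:Amn}. For the recurrence I would expand $A_m^n-A_{m+1}^n$, reindex the second sum via $j\mapsto j-1$, and collect the coefficient of $a_{m+j}$; Pascal's rule $\binom{n+1}{j}=\binom{n}{j}+\binom{n}{j-1}$ (with the conventions $\binom{n}{j}=0$ for $j<0$ or $j>n$) turns this into $A_m^{n+1}$. Statement (b) is then a matter of bookkeeping: the array \eqref{eq:Amn} built from the shifted sequence $\{a_{n+1}\}$ equals $A_{m+1}^n$, and since $m+1\in\ZP$, complete alternation of $\{a_n\}$ forces it to be $\le 0$ for $n\ge 1$. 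For (c) I would rewrite the recurrence as $A_{m+1}^n=A_m^n-A_m^{n+1}$ and observe that $A_m^{n+1}\le 0$ for all $m,n\in\ZP$ (this is precisely complete alternation at order $n+1\ge 1$); hence $A_{m+1}^n\ge A_m^n$, which is the claimed monotonicity in $m$, and the case $n=0$ specialises to $a_{m+1}\ge a_m$.

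The only step needing a real idea is (d), and this is where I expect the main obstacle. Writing $d_n:=a_{n+1}-a_n$, part (c) at order $1$ gives $d_n\ge 0$, while complete alternation at order $2$ gives $A_n^2=d_{n+1}-d_n\le 0$, so $\{d_n\}$ is nonincreasing. The asserted inequality $\frac{a_{n+1}}{a_n}\ge\frac{a_{n+2}}{a_{n+1}}$ is, since all terms are positive, equivalent to $a_{n+1}^2-a_na_{n+2}\ge 0$, and a short substitution using $a_{n+1}=a_n+d_n$ and $a_{n+2}=a_{n+1}+d_{n+1}$ yields the clean identity
\[ a_{n+1}^2-a_na_{n+2}=a_{n+1}d_n-a_nd_{n+1}. \]
From $0<a_n\le a_{n+1}$ and $0\le d_{n+1}\le d_n$ one estimates $a_{n+1}d_n\ge a_nd_n\ge a_nd_{n+1}$, so the right-hand side is nonnegative. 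The subtle point is that complete alternation delivers only \emph{arithmetic} concavity ($A_n^2\le 0$), and one must combine it with positivity and monotonicity of the increments to upgrade to the \emph{logarithmic} concavity encoded in the ratio condition.

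Finally, (e) is immediate from the observation that for each fixed $m,n$ the map $\{a_k\}\mapsto A_m^n$ is a finite real-linear functional of the sequence. Nonnegative linear combinations of completely alternating sequences therefore stay completely alternating, giving the convex cone; and if completely alternating sequences converge pointwise, then because $A_m^n$ depends on only finitely many coordinates, $A_m^n$ passes to the limit, and a pointwise limit of nonpositive reals is nonpositive, giving closedness.
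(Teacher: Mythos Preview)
Your argument is correct in all five parts. The recurrence in (a) follows from Pascal's rule exactly as you indicate; (b) and (c) are then the one-line consequences you describe; the identity $a_{n+1}^2-a_na_{n+2}=a_{n+1}d_n-a_nd_{n+1}$ together with $0<a_n\le a_{n+1}$ and $0\le d_{n+1}\le d_n$ does give log-concavity in (d); and (e) is immediate from finite linearity of each $A_m^n$.

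As for comparison with the paper: there is nothing to compare. The paper explicitly states ``Their elementary proofs are omitted'' and gives no argument for Proposition~\ref{pro:alt-seq}. Your write-up therefore supplies what the paper leaves out, and does so cleanly. One cosmetic remark: when you say ``part (c) at order $1$ gives $d_n\ge 0$'', what you mean is the $n=0$ case of (c) (equivalently, complete alternation at order $1$, i.e.\ $A_m^1\le 0$). This is clear from context, but you may want to phrase it unambiguously.
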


More facts about completely alternating sequences can be found in e.g.\ 
\cite{berg}. %-------- sprawdzić!

The relation between completely hyperexpansive operators and completely alternating
sequences is presented in the following lemma. In \cite{szifty} the criterion
appearing in the lemma is used as a definition of complete hyperexpansivity.

\begin{Lem}\label{lem:che-general}
An operator $T\in\BO{\Hilb}$ is completely hyperexpansive if and only if
the sequence $\{\|T^n f\|^2\}_{n=0}^\infty$ is completely alternating
for every $f\in\Hilb$.
\end{Lem}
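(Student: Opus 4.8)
The plan is to unwind the definition of complete hyperexpansivity given in \eqref{eq:hyperexA} and relate it to the quadratic forms $\scp{A_n(T)f,f}$ for $f\in\Hilb$. The crucial observation is that expanding the inner product of the operator $A_n(T)$ against a vector $f$ gives exactly the alternating combination of squared norms of iterates of $T$. Concretely, I would compute
\[
\scp{A_n(T)f,f}=\sum_{j=0}^n (-1)^j\binom{n}{j}\scp{T^{*j}T^j f,f}
=\sum_{j=0}^n (-1)^j\binom{n}{j}\|T^j f\|^2,
\]
using that $\scp{T^{*j}T^jf,f}=\scp{T^jf,T^jf}=\|T^jf\|^2$. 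This single identity is the heart of the matter: it translates the operator-level condition into a statement about the scalar sequence $\{\|T^nf\|^2\}_{n=0}^\infty$.

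With this identity in hand, the equivalence becomes almost tautological. First I would record that $A_n(T)\leq 0$ means, by definition of operator positivity, that $\scp{A_n(T)f,f}\leq 0$ for every $f\in\Hilb$. For the forward direction, assume $T$ is completely hyperexpansive, so $A_n(T)\leq 0$ for all $n\in\N$; fixing an arbitrary $f\in\Hilb$ and applying the computed identity shows that $\sum_{j=0}^n(-1)^j\binom{n}{j}\|T^jf\|^2\leq 0$ for all $n\in\N$. Setting $a_m:=\|T^{m}f\|^2$, this is precisely the defining inequality of a completely alternating sequence in the special case $m=0$; for general $m\geq 0$ one replaces $f$ by $T^mf$ (equivalently, notes that $\|T^{m+j}f\|^2=\|T^j(T^mf)\|^2$ and applies the same inequality to the vector $T^mf$), which yields $\sum_{j=0}^n(-1)^j\binom{n}{j}a_{m+j}\leq 0$ for all $m\in\ZP$, $n\in\N$.

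For the converse, assume $\{\|T^nf\|^2\}_{n=0}^\infty$ is completely alternating for every $f\in\Hilb$. Taking $m=0$ in the definition of completely alternating and reading the identity backwards gives $\scp{A_n(T)f,f}\leq 0$ for every $f$ and every $n\in\N$, which is exactly $A_n(T)\leq 0$, i.e.\ complete hyperexpansivity. I do not expect any serious obstacle here: the only points requiring mild care are the self-adjointness of each $A_n(T)$ (so that $A_n(T)\leq 0$ is genuinely equivalent to the nonpositivity of the associated quadratic form on all of $\Hilb$, and real inner products suffice despite the complex scalars), and the substitution $f\mapsto T^mf$ needed to obtain the completely alternating inequalities for every shift $m$ rather than merely $m=0$. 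Both are routine, so the proof reduces essentially to the displayed expansion of $\scp{A_n(T)f,f}$.
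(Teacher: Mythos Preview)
Your proposal is correct and takes essentially the same approach as the paper: the paper's proof is a one-line reference to \eqref{eq:hyperexA}, \eqref{eq:Amn} and the definition of a negative (semi)definite operator, and you have simply spelled out that reference in detail, including the routine but necessary observation that the shift $f\mapsto T^m f$ delivers the completely alternating inequalities for all $m\in\ZP$.
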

\begin{proof}
It is a consequence of \eqref{eq:hyperexA}, \eqref{eq:Amn} and
the definition of a negative definite operator.
\end{proof}

Completely alternating sequences are somehow similar to the moment sequences
mentioned in the section about subnormality. The following proposition
provides a handy measure-theoretic characterisation.
Its proof can be found in \cite[Proposition 4.6.12]{berg}.
\begin{Thm}\label{thm:compalt-measure}
A sequence $\{a_n\}_{n=0}^\infty \subseteq \R$ is completely alternating
if and only if
there exists a positive Borel measure $\tau$ on $[0,1]$ such that
\begin{equation}\label{eq:altmeasure0}
a_n = a_0 + \int_{[0,1]}\sum_{j=0}^{n-1} t^j \ud \tau(t),\quad n\in\N.
\end{equation}
\end{Thm}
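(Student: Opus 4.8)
The plan is to establish both implications through a single algebraic reduction that converts the complete alternation of $\{a_n\}$ into the complete monotonicity of its first-difference sequence, after which the classical Hausdorff moment problem on $[0,1]$ does the real work.

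First I would introduce the difference sequence $c_k:=a_{k+1}-a_k$ for $k\in\ZP$ together with the forward difference operator $\Delta c_k:=c_{k+1}-c_k$. By Proposition~\ref{pro:alt-seq}(a) one has $A_m^0=a_m$ and $A_m^{n+1}=A_m^n-A_{m+1}^n$, so $A_m^1=a_m-a_{m+1}=-c_m$, and a one-line induction on $n$ yields $A_m^n=(-1)^n\Delta^{\,n-1}c_m$ for all $n\geq 1$ and $m\in\ZP$. Consequently the defining condition $A_m^n\leq 0$ $(n\geq 1)$ of complete alternation of $\{a_n\}$ is equivalent to $(-1)^p\Delta^{\,p}c_m\geq 0$ for all $p,m\in\ZP$, i.e.\ to the \emph{complete monotonicity} of $\{c_k\}_{k=0}^\infty$. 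This equivalence is the hinge that makes the two directions symmetric.

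For the forward implication I would invoke Hausdorff's solution of the moment problem on $[0,1]$: a completely monotone sequence $\{c_k\}$ is precisely the moment sequence of a positive Borel measure $\tau$ on $[0,1]$, so $c_k=\int_{[0,1]}t^k\ud\tau(t)$ for every $k$. Summing the telescoping identity $a_n-a_0=\sum_{k=0}^{n-1}c_k$ and interchanging the finite sum with the integral (legitimate as the sum is finite) gives
\[ a_n=a_0+\int_{[0,1]}\sum_{k=0}^{n-1}t^k\ud\tau(t), \]
which is \eqref{eq:altmeasure0}. For the converse I would simply difference the representation, obtaining $c_n=a_{n+1}-a_n=\int_{[0,1]}t^n\ud\tau(t)$, so that $\{c_k\}$ is a moment sequence; the elementary computation $(-1)^p\Delta^{\,p}c_k=\int_{[0,1]}(1-t)^p t^k\ud\tau(t)\geq 0$ shows $\{c_k\}$ is completely monotone, whence $\{a_n\}$ is completely alternating by the hinge equivalence. (Alternatively, the converse follows from Proposition~\ref{pro:alt-seq}(e): for each fixed $t\in[0,1]$ the sequence $\{\sum_{j=0}^{n-1}t^j\}_n$ is completely alternating, and the representation exhibits $\{a_n\}$ as a constant plus a pointwise limit of nonnegative combinations of such sequences.)

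The main obstacle is the nontrivial direction of the Hausdorff theorem, namely producing the measure $\tau$ from complete monotonicity alone; all the remaining steps are bookkeeping. If a self-contained argument were desired rather than a citation, I would follow the Bernstein–Hausdorff–Widder route, constructing $\tau$ as a weak-$*$ limit of the discrete measures $\tau_N=\sum_{i=0}^{N}\binom{N}{i}(-1)^{N-i}\Delta^{\,N-i}c_i\,\delta_{i/N}$, whose masses are nonnegative exactly because $\{c_k\}$ is completely monotone, with tightness on the compact interval $[0,1]$ guaranteeing a convergent subsequence and the moment conditions passing to the limit.
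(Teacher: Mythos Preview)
Your argument is correct. The reduction $A_m^n=(-1)^n\Delta^{\,n-1}c_m$ is valid (the induction step is exactly Proposition~\ref{pro:alt-seq}(a)), so complete alternation of $\{a_n\}$ is equivalent to complete monotonicity of $\{c_k\}$, and from there Hausdorff's theorem does the work in both directions as you describe.

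The paper itself does not give a proof of this statement; it simply cites \cite[Proposition~4.6.12]{berg} and then, in the remark following the theorem, records the dictionary between Berg--Christensen--Ressel's parametrisation $a_n=a+nb+\int_{[0,1)}(1-x^n)\ud\mu(x)$ and the form \eqref{eq:altmeasure0} used here. So your contribution is genuinely different in that you supply a self-contained route tailored to sequences, whereas the paper defers to the general semigroup machinery of \cite{berg}. What your approach buys is transparency: the reader sees immediately that the representing measure $\tau$ is nothing but the Hausdorff moment measure of the first differences, which is exactly the fact exploited later (e.g.\ in the proof of Corollary~\ref{cor:alt-unique}, where $A_m^1=-\int t^m\ud\tau$). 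What the citation to \cite{berg} buys is brevity and the connection to the broader theory of negative definite functions on abelian semigroups, of which this is the case $(\ZP,+)$.
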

\begin{Rem}
The formulation of \cite[Proposition 4.6.12]{berg} mentions a Ra\-don measure
$\mu$ on $[0,1)$ such that \[a_n = a+ nb +\int_{[0,1)}(1-x^n)\ud\mu(x),
\quad n\in\ZP,\] for some $a,b\in\R$ with $b\geq 0$.
Measure $\tau$ from \eqref{eq:altmeasure0} is then given by the following
one-to-one correspondence \[\tau(A)=b\delta_1(A)+\int_{A\setminus\{1\}}(1-x)\ud\mu(x),
\quad A\in\Borel([0,1]).\]
A more detailed comment can be found in \cite[Remark~1]{athav}.
\end{Rem}

\begin{Cor}\label{cor:alt-unique}
Measure $\tau$ appearing in Theorem~\ref{thm:compalt-measure} is unique
for a completely alternating sequence $\{a_n\}_{n=0}^\infty$.
\end{Cor}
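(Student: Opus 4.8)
The plan is to reduce the uniqueness of $\tau$ to the classical uniqueness in the Hausdorff moment problem on the compact interval $[0,1]$. First I would extract a moment relation from the representation \eqref{eq:altmeasure0}. Subtracting consecutive terms and recalling that $a_1 = a_0 + \int_{[0,1]} t^0\ud\tau(t)$, one obtains
\[
a_{n+1}-a_n = \int_{[0,1]} t^n \ud\tau(t),\quad n\in\ZP,
\]
so that the increments of the sequence $\{a_n\}_{n=0}^\infty$ are precisely the moments of $\tau$. Since the left-hand side is completely determined by $\{a_n\}_{n=0}^\infty$, it suffices to show that a finite positive Borel measure on $[0,1]$ is uniquely determined by its moments.

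To this end I would suppose that $\tau_1$ and $\tau_2$ are two positive Borel measures on $[0,1]$ satisfying $\int_{[0,1]} t^n\ud\tau_1(t)=\int_{[0,1]} t^n\ud\tau_2(t)$ for every $n\in\ZP$. By linearity this gives $\int_{[0,1]} p\ud\tau_1 = \int_{[0,1]} p\ud\tau_2$ for every polynomial $p$. The Weierstrass approximation theorem provides uniform density of the polynomials in $C([0,1])$, whence $\int_{[0,1]} g\ud\tau_1 = \int_{[0,1]} g\ud\tau_2$ for every $g\in C([0,1])$. The uniqueness part of the Riesz representation theorem then forces $\tau_1=\tau_2$, which completes the argument.

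This corollary is essentially routine, and I would not expect a genuine obstacle. The only point deserving a word of care is the endpoint $t=1$, at which $\tau$ may carry an atom (corresponding to the term $nb$ in the preceding Remark). This causes no difficulty, because the entire approximation is performed on the closed interval $[0,1]$, on which the polynomials remain dense; consequently the mass $\tau(\{1\})$ is recovered together with the rest of the measure and needs no separate treatment.
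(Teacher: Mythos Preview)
Your argument is correct and essentially identical to the paper's proof: the paper computes $A_m^1=a_m-a_{m+1}=-\int_{[0,1]}t^m\ud\tau(t)$ (the negative of your increment formula) and then concludes by the same Weierstrass density argument. The only cosmetic difference is that you name the Riesz representation theorem explicitly where the paper leaves that step implicit.
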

\begin{proof}
Consider the values $A_m^1$ defined by \eqref{eq:Amn}. Using the recursive formula
(see Proposition~\ref{pro:alt-seq}) we conclude from \eqref{eq:altmeasure0} that
\[ A_m^1=-\int_{[0,1]} t^m\ud\tau(t),\quad m\in\ZP.\]
As a consequence we obtain that given two Borel measures $\tau$ and $\tau'$
satisfying \eqref{eq:altmeasure0} we have
$\int_{[0,1]} p(t)\ud\tau(t)=\int_{[0,1]} p(t)\ud\tau'(t)$ for any polynomial $p$,
and hence $\tau=\tau'$ by uniform density of polynomials among continuous functions
on $[0,1]$.
\end{proof}

Measure $\tau$ satisfying \eqref{eq:altmeasure0} will be called
\emph{the representing measure} of the completely alternating sequence
$\{a_n\}_{n=0}^\infty$. 
It can be easily observed that the map assigning representing measures to
completely alternating sequences is ``linear''.

Using \eqref{eq:normasumy}, Lemma~\ref{lem:che-general} and
Proposition~\ref{pro:alt-seq}~(e) we obtain the following characterisation
(we use the notation $\int_0^1 = \int_{[0,1]}$):
\begin{Cor}\label{cor:shift-hypexp}
Let $S_\blambda$ be a bounded weighted shift on a directed forest $\Tree=(V,\parf)$.
Then \NWSR\begin{abece}
\item $S_\blambda$ is completely hyperexpansive,
\item the sequence $\{ \|S_\blambda^n e_v\|^2 \}_{n=0}^\infty$ is
completely alternating for every $v\in V$,
\item for every $v\in V$, there exists a $($unique$)$ Borel measure $\tau_v$ on $[0,1]$
such that $\|S_\blambda^{n+1} e_v\|^2-\|S_\blambda^n e_v\|^2
=\int_0^1 t^n \ud\tau_v(t)$, for $n\in\ZP$.
\end{abece}
Moreover,  $\tau_v$ from {\rm (c)} is the representing measure for
the completely alternating sequence $\{ \|S_\blambda^n e_v\|^2 \}_{n=0}^\infty$.
\end{Cor}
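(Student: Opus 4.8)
The plan is to prove the cycle (a)$\Rightarrow$(b)$\Rightarrow$(a) first, reducing everything to Lemma~\ref{lem:che-general}, and then to handle the equivalence (b)$\Leftrightarrow$(c) together with the identification of $\tau_v$ separately, since the latter is purely a rephrasing of the measure representation.

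The implication (a)$\Rightarrow$(b) is immediate: if $S_\blambda$ is completely hyperexpansive, then Lemma~\ref{lem:che-general} tells us that $\{\|S_\blambda^n f\|^2\}_{n=0}^\infty$ is completely alternating for every $f\in\eld{V}$, and specialising to $f=e_v$ yields exactly (b).

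The substantial direction is (b)$\Rightarrow$(a). Here I would fix an arbitrary $f=\sum_{v\in V} a_v e_v\in\eld{V}$ (whose support is automatically at most countable) and invoke the orthogonality identity \eqref{eq:normasumy} to write, for each $n\in\ZP$,
\[ \|S_\blambda^n f\|^2 = \sum_{v\in V} |a_v|^2 \|S_\blambda^n e_v\|^2. \]
Thus $\{\|S_\blambda^n f\|^2\}_{n=0}^\infty$ is a series, with non-negative coefficients $|a_v|^2$, of the sequences $\{\|S_\blambda^n e_v\|^2\}_{n=0}^\infty$, each completely alternating by hypothesis (b). Enumerating the support of $f$, the finite truncations are conic combinations of completely alternating sequences, hence completely alternating by the convex-cone part of Proposition~\ref{pro:alt-seq}~(e). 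Since $S_\blambda$ is bounded we have $\|S_\blambda^n e_v\|^2\leq\|S_\blambda\|^{2n}$, so for each fixed $n$ the truncations converge to $\|S_\blambda^n f\|^2$; that is, they converge coordinatewise in $\R^\ZP$ to $\{\|S_\blambda^n f\|^2\}_{n=0}^\infty$. The pointwise-closedness part of Proposition~\ref{pro:alt-seq}~(e) then guarantees that the limit is completely alternating, and as $f$ was arbitrary, Lemma~\ref{lem:che-general} gives (a). The main obstacle is exactly this step: one must separate the finite (conic-combination) argument from the limiting one and use boundedness of $S_\blambda$ to secure the coordinatewise convergence needed before Proposition~\ref{pro:alt-seq}~(e) can be applied.

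For (b)$\Leftrightarrow$(c) and the ``moreover'' part, I would apply Theorem~\ref{thm:compalt-measure} to the sequence $a_n:=\|S_\blambda^n e_v\|^2$. Its representation \eqref{eq:altmeasure0} telescopes to $a_{n+1}-a_n=\int_0^1 t^n\ud\tau_v(t)$ for $n\in\ZP$, which is precisely the equality in (c); conversely, any Borel measure satisfying that equality reconstructs \eqref{eq:altmeasure0} by summation, so (c) holds exactly when $\{a_n\}_{n=0}^\infty$ is completely alternating, that is, exactly under (b). Uniqueness of $\tau_v$ and its identification as the representing measure of $\{\|S_\blambda^n e_v\|^2\}_{n=0}^\infty$ then follow directly from Corollary~\ref{cor:alt-unique}.
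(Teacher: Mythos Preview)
Your proof is correct and follows exactly the approach the paper indicates: the paper merely says the corollary is obtained ``using \eqref{eq:normasumy}, Lemma~\ref{lem:che-general} and Proposition~\ref{pro:alt-seq}~(e)'', and you have faithfully unpacked precisely those three ingredients (plus Theorem~\ref{thm:compalt-measure} and Corollary~\ref{cor:alt-unique} for the measure part, which is the intended reading of the ``moreover'' clause). The only superfluous detail is the bound $\|S_\blambda^n e_v\|^2\le\|S_\blambda\|^{2n}$, since coordinatewise convergence of the truncations already follows from the identity \eqref{eq:normasumy} itself; otherwise nothing is missing.
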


By Proposition~\ref{pro:alt-seq}~(c), $\ker T=\{0\}$, provided that
$T\in\BO{\Hilb}$ is completely hyperexpansive. Consequently, if $S_\blambda$ is
a~completely hyperexpansive weighted shift on a directed forest $\Tree$,
then $\Tree$ contains neither leaves nor degenerate trees
(i.e.\ $\Chif(v)\neq\emptyset$ for every vertex $v$ of $\Tree$).

Knowing the characterisation given in Corollary~\ref{cor:shift-hypexp},
a natural starting point for the study of backward extensions of completely
hyperexpansive weighted shifts is the question about extensions of completely
alternating sequences. The following lemma addresses this problem.
It generalises \cite[Lemma 7.1.2]{szifty}.
\begin{Lem}\label{lem:compalt-ext}
Let $\{a_n\}_{n=0}^\infty$ be a completely alternating sequence with the
representing measure $\tau$ and $k$ be a positive integer. Then \NWSR
\begin{abece}
\item there exist $\{a_{-j}\}_{j=1}^k \subseteq\R$ such that the
sequence $\{a_{n-k}\}_{n=0}^\infty$ is completely alternating
with $a_{-k}=1$,
\item $\tau(\{0\})=0$ and
\begin{equation}\label{eq:altext}
\int_0^1\left( \frac{1}{t}+\ldots+\frac{1}{t^k}\right)\ud\tau(t)\leq a_0-1.
\end{equation}
\end{abece}
Moreover, if {\rm (a)} is satisfied, then the Borel measure $\rho$ defined by
\begin{align}\label{eq:roztau}
\rho(A):=\int_A \frac{1}{t^k}\ud\tau(t)&+ \left(a_0-1-
\int_0^1\Bigl( \frac{1}{t}+\ldots+\frac{1}{t^k}\Bigr)\ud\tau(t)
\right)\delta_0(A),\nonumber \\
&\hspace{26ex} A\in\Borel([0,1]),
\end{align}
is representing for $\{a_{n-k}\}_{n=0}^\infty$ and $a_n\geq 1$ for all $n\geq -k$.
\end{Lem}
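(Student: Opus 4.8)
The plan is to reduce the entire statement to the first-difference (moment) form of complete alternation. By Theorem~\ref{thm:compalt-measure} together with the recursion of Proposition~\ref{pro:alt-seq}(a) (as already exploited in the proof of Corollary~\ref{cor:alt-unique}, where $A_m^1=-\int_0^1 t^m\ud\tau$), a sequence $\{c_n\}_{n=0}^\infty$ is completely alternating with representing measure $\sigma$ \emph{exactly} when $c_{n+1}-c_n=\int_0^1 t^n\ud\sigma(t)$ for all $n\in\ZP$; moreover $\{c_n\}$ is determined by $c_0$ and $\sigma$ via \eqref{eq:altmeasure0}. So the whole question is how the representing measure transforms under a backward shift of the index. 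If the backward extension $\{a_{n-k}\}_{n=0}^\infty$ exists, it has a representing measure $\rho$, and its differences read $a_{n+1-k}-a_{n-k}=\int_0^1 t^n\ud\rho(t)$; comparing with $a_{m+1}-a_m=\int_0^1 t^m\ud\tau(t)$ for $m=n-k\geq 0$ gives the central identity $\int_0^1 t^m\ud\tau(t)=\int_0^1 t^{m+k}\ud\rho(t)$ for all $m\in\ZP$.

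For (a)$\Rightarrow$(b) I would feed this identity into the density of polynomials in $C([0,1])$ (the same argument as in Corollary~\ref{cor:alt-unique}) to conclude $\ud\tau(t)=t^k\ud\rho(t)$ as measures on $[0,1]$. In particular $\tau(\{0\})=\int_{\{0\}}t^k\ud\rho=0$, and on $(0,1]$ one inverts to $\ud\rho=t^{-k}\ud\tau$. Summing the first $k$ new differences, $a_0-a_{-k}=\int_0^1(1+t+\cdots+t^{k-1})\ud\rho$; substituting the density $t^{-k}$ turns the polynomial $1+\cdots+t^{k-1}$ into $\frac1{t^k}+\cdots+\frac1t$ on $(0,1]$, and the atom of $\rho$ at $0$ contributes its mass $\rho(\{0\})$. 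Since $a_{-k}=1$ this yields $a_0-1=\rho(\{0\})+\int_0^1\bigl(\frac1t+\cdots+\frac1{t^k}\bigr)\ud\tau$, and dropping the nonnegative atom gives \eqref{eq:altext} (and incidentally shows the integral is finite).

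For (b)$\Rightarrow$(a) I would take $\rho$ to be precisely the measure of \eqref{eq:roztau}. Condition (b) is exactly what makes $\rho$ a genuine positive Borel measure: $\tau(\{0\})=0$ makes $t^{-k}\ud\tau$ finite on $(0,1]$, and \eqref{eq:altext} makes the coefficient of $\delta_0$ nonnegative. Let $\{b_n\}_{n=0}^\infty$ be the completely alternating sequence determined via \eqref{eq:altmeasure0} by $b_0=1$ and the measure $\rho$. Because $k\geq 1$, the atom at $0$ contributes nothing to $\int_0^1 t^{m+k}\ud\rho$, so $\int_0^1 t^{m+k}\ud\rho=\int_0^1 t^m\ud\tau=a_{m+1}-a_m$ for every $m\in\ZP$; hence $b_{n+1}-b_n=a_{n+1-k}-a_{n-k}$ for $n\geq k$, i.e.\ $b_n-a_{n-k}$ is constant on $n\geq k$. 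A telescoping computation, $b_k=1+\int_0^1(1+\cdots+t^{k-1})\ud\rho$, evaluated using the very definition of the atom coefficient in \eqref{eq:roztau}, gives $\int_0^1(1+\cdots+t^{k-1})\ud\rho=a_0-1$, so $b_k=a_0$ and the constant vanishes, whence $b_n=a_{n-k}$ for all $n\geq k$. Setting $a_{-j}:=b_{k-j}$ for $j=1,\ldots,k$ then produces the desired extension with $a_{-k}=b_0=1$, and $\rho$ is its representing measure. The closing bound $a_n\geq 1$ for $n\geq -k$ is then immediate: $\{b_n\}$ is monotonically increasing by Proposition~\ref{pro:alt-seq}(c), so $a_{n-k}=b_n\geq b_0=1$.

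The only real obstacle is the bookkeeping at the point $t=0$: one must check that $\tau(\{0\})=0$ is forced, that the density $t^{-k}$ is $\tau$-integrable precisely under (b), and that the free atom $\rho(\{0\})$ absorbs exactly the slack in \eqref{eq:altext}. Everything else is the moment identity plus telescoping. To keep all expressions finite I would state once and for all the convention that each integral $\int_0^1 t^{-j}\ud\tau$ is read over $(0,1]$, which is legitimate precisely because $\tau(\{0\})=0$.
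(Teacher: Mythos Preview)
Your argument is correct and follows essentially the same route as the paper: in both directions one identifies the representing measure $\rho$ of the extended sequence via the relation $\ud\tau=t^k\ud\rho$, reads off $\tau(\{0\})=0$ and the atom $\rho(\{0\})=a_0-1-\int_0^1(t^{-1}+\cdots+t^{-k})\ud\tau$, and uses monotonicity for $a_n\geq 1$. The only cosmetic difference is that you work throughout with first differences $c_{n+1}-c_n=\int_0^1 t^n\ud\sigma$, while the paper manipulates the cumulative formula \eqref{eq:altmeasure0} directly; the two are equivalent bookkeeping. One small wording slip: in (b)$\Rightarrow$(a) it is \eqref{eq:altext}, not $\tau(\{0\})=0$, that makes $\int_{(0,1]}t^{-k}\ud\tau$ finite; the role of $\tau(\{0\})=0$ is only to let you read $\int_0^1$ as $\int_{(0,1]}$.
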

\begin{proof}
\pImp{a}{b}
Let $\rho_0$ be the representing measure of a completely alternating sequence
$\{a_{n-k}\}_{n=0}^\infty$ (see Theorem~\ref{thm:compalt-measure}). Define the Borel
measure $\tau_0$ on $[0,1]$ by the formula $\tau_0(A):=\int_A t^k \rho_0(t)$
for $A\in\Borel([0,1])$.
Then
\begin{align*}
a_n &= a_{-k}+\int_0^1 \Bigl(1+\ldots+t^{n+k-1}\Bigr)\ud\rho_0(t) \\
 &= a_{-k}+\int_0^1 \Bigl(1+\ldots+t^{k-1}\Bigr)\ud\rho_0(t)
+\int_0^1 \Bigl(t^k+\ldots+t^{n+k-1}\Bigr)\ud\rho_0(t) \\
&= a_0 + \int_0^1 \Bigl( 1+\ldots+t^{n-1}\Bigr) t^k \ud\rho_0(t) \\
&= a_0 + \int_0^1 \Bigl(1+\ldots+t^{n-1}\Bigr) \ud\tau_0(t),\quad n\in\N,
\end{align*}
and by the uniqueness of representing measure, $\tau_0=\tau$ and
consequently $\tau(\{0\})=\tau_0(\{0\})=0$.

Using \eqref{eq:altmeasure0} to $\rho_0$ and $\{a_{n-k}\}_{n=0}^\infty$
with $n=k$ we obtain
\begin{align*}
a_0 &= a_{-k}+\int_0^1\Bigl( 1+\ldots+t^{k-1} \Bigr)\ud\rho_0(t) \\
&\geq 1 + \int_{(0,1]}\Bigl( 1+\ldots+t^{k-1} \Bigr)\ud\rho_0(t) \\
%&=1+\int_{(0,1]} \Bigl(\frac{1}{t^k}+\ldots +\frac{1}{t}\Bigr) t^k\ud\rho_0(t)\\
&=1+\int_{(0,1]}\Bigl( \frac{1}{t^k}+\ldots +\frac{1}{t} \Bigr)\ud\tau(t)\\
&=1+\int_0^1\Bigl( \frac{1}{t^k}+\ldots +\frac{1}{t} \Bigr)\ud\tau(t).
\end{align*}
This proves the inequality \eqref{eq:altext}.

For the `moreover' part we need to verify that $\rho$ given by \eqref{eq:roztau}
is a representing measure for $\{a_{n-k}\}_{n=0}^\infty$, i.e.\ $\rho=\rho_0$.
If $A\in\Borel([0,1])$ and $0\notin A$, then
\[\rho(A) \refoneq{eq:roztau} \int_A\frac{1}{t^k}\ud\tau(t)=
\int_A\frac{1}{t^k}\ud\tau_0(t)=\int_A\frac{t^k}{t^k}\ud\rho_0(t)=\rho_0(A).\]
To complete the proof of $\rho=\rho_0$ it suffices to compute $\rho_0(\{0\})$.
Arguing as in the previous paragraph, we get
\begin{align*}
a_0 -1&= a_{-k}-1 +\int_0^1 \Bigl(1+\ldots+t^{k-1} \Bigr)\ud\rho_0(t) \\
&= \rho_0(\{0\})+ \int_{(0,1]} \Bigl(\frac{1}{t^k}+\ldots
+\frac{1}{t}\Bigr) t^k\ud\rho_0(t)\\
&= \rho_0(\{0\})+ \int_0^1 \Bigl(\frac{1}{t^k}+\ldots
+\frac{1}{t} \Bigr)\ud\tau(t).
\end{align*}
Hence
\[\rho_0(\{0\})=a_0 -1-\int_0^1 \Bigl( \frac{1}{t^k}+\ldots
+\frac{1}{t} \Bigr)\ud\tau(t) \refoneq{eq:roztau} \rho(\{0\}). \]

That $\{a_{n-k}\}_{n=0}^\infty\subseteq [1,\infty)$ follows from
Proposition~\ref{pro:alt-seq}~(c) and $a_{-k}=1$.

\pImp{b}{a}
Define the Borel measure $\rho$ by \eqref{eq:roztau}. By \eqref{eq:altext}
it is a well defined positive measure. Put $a_{-k}:=1$ and
\begin{equation}\label{eq:a-j}
a_{-j}:=a_{-k}+\int_0^1 \Bigl( 1+\ldots+t^{j-1} \Bigr) \ud\rho(t),
\quad j=1,\ldots,k-1.
\end{equation}

According to Theorem~\ref{thm:compalt-measure} it suffices to prove that
$\rho$ satisfies \eqref{eq:altmeasure0} for the sequence $\{a_{n-k}\}_{n=0}^\infty$.
With convention that $\sum_{j=N}^{N-1} f(j) = 0$, we obtain
\begin{align*}
a_{-k} +\int_0^1 &\sum_{j=0}^{n-1} t^j \ud\rho(t)\\
&\spacedeq{1.3ex} 1 + \rho(\{0\}) + \int_{(0,1]} \sum_{j=0}^{n-1}t^j \ud\rho(t)\\
&\spacedeq{1.3ex} 1+ \rho(\{0\}) + \int_{(0,1]} \sum_{j=0}^{k-1} \frac{1}{t^{j+1}} \ud\tau(t)
+\int_{(0,1]} \sum_{j=k}^{n-1} t^{j-k} \ud\tau(t)\\
% &= 1 + \rho(\{0\}) +
% \int_0^1 \Bigl( \frac{1}{t^k}+\ldots +\frac{1}{t} \Bigr)\ud\tau(t)
% +\int_0^1 \sum_{j=0}^{n-k-1} t^j \ud\tau(t)\\
&\refoneq{eq:roztau} a_0+ \int_0^1 \sum_{j=0}^{n-k-1} t^j \ud\tau(t) = a_{n-k},\quad n\geq k.
\end{align*}
This, combined with \eqref{eq:a-j}, completes the proof.
\end{proof}

Having the above lemma we can pass to a characterisation of those weighted shifts
that admit completely hyperexpansive $k$-step backward extensions. This is a
generalisation of \cite[Theorem 4.2.($1^\circ$)]{hypexp-ext} to the case of weighted
shifts on directed trees.

\begin{Thm}\label{thm:che-ext} %****************************************************
For a bounded completely hyperexpansive weighted shift $S_\blambda$ on a
directed tree $\Tree=(V,\parf)$ with the root $\omega$ and a number $k\geq 1$, \NWSR\begin{abece}
\item $S_\blambda$ admits a completely hyperexpansive $k$-step backward extension,
\item $\int_0^1 \bigl( \frac{1}{t}+\ldots+\frac{1}{t^k} \bigr)\ud\tau(t) < 1$,
where $\tau$ is the representing measure for the completely alternating sequence
$\{\|S_\blambda^n e_\omega\|^2\}_{n=0}^\infty$.
\end{abece}\end{Thm}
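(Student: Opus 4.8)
The plan is to translate the operator statement into a purely sequential one about the completely alternating sequence $a_n := \|S_\blambda^n e_\omega\|^2$ (so $a_0 = 1$, and $\tau$ is its representing measure by Corollary~\ref{cor:shift-hypexp}), and then apply Lemma~\ref{lem:compalt-ext} after a rescaling.

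First I would analyse the structure of the extended tree $\Tree\sqlow{k}$. Its new vertices $\omega_1,\ldots,\omega_k$ form a single path $\omega_k\to\omega_{k-1}\to\cdots\to\omega_0=\omega$, each having exactly one child, while the subtree hanging below $\omega$ is untouched. Consequently, for $v\in V$ the descendant set and all weights are unchanged, so $\|S_{\blambda'}^n e_v\|^2=\|S_\blambda^n e_v\|^2$ is completely alternating by hypothesis. Using $S_{\blambda'}e_{\omega_m}=\lambda'_{\omega_{m-1}}e_{\omega_{m-1}}$ I obtain, for $n\geq 1$, the relation $\|S_{\blambda'}^n e_{\omega_m}\|^2=|\lambda'_{\omega_{m-1}}|^2\,\|S_{\blambda'}^{n-1}e_{\omega_{m-1}}\|^2$. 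Since shifting the index and multiplying by a positive scalar both preserve complete alternation (Proposition~\ref{pro:alt-seq}(b),(e)), this shows that the sequence at $\omega_{m-1}$ is completely alternating whenever the one at $\omega_m$ is. Descending from $m=k$, I conclude that $S_{\blambda'}$ is completely hyperexpansive if and only if the single sequence $b_n:=\|S_{\blambda'}^n e_{\omega_k}\|^2$ is completely alternating.

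Next I would compute this root sequence explicitly. Writing $P:=\prod_{i=0}^{k-1}|\lambda'_{\omega_i}|^2>0$ and iterating the shift along the path gives $b_{k+j}=P\,a_j$ for $j\geq 0$, together with $b_0=1$ and positive values $b_1,\ldots,b_{k-1}$ whose successive ratios are exactly the squared weights $|\lambda'_{\omega_i}|^2$. Thus $\{b_n\}_{n=0}^\infty$ is precisely a $k$-step backward extension, in the sense of Lemma~\ref{lem:compalt-ext}, of the completely alternating sequence $\{P a_n\}_{n=0}^\infty$ (whose representing measure is $P\tau$ and whose zeroth term is $P$), with the value at position $-k$ normalised to $b_0=1$. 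Conversely, any completely alternating such extension has non-decreasing positive terms by Proposition~\ref{pro:alt-seq}(c), and therefore recovers admissible non-zero weights and a bounded extension, since only finitely many new vertices, each of finite degree, are added. Hence the existence of a completely hyperexpansive $k$-step backward extension is equivalent to the existence of some $P>0$ for which $\{P a_n\}$ admits a completely alternating backward extension with prescribed value $1$ at position $-k$.

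Finally I would invoke Lemma~\ref{lem:compalt-ext} for $\{P a_n\}$: its condition (b) reads $(P\tau)(\{0\})=0$ and $\int_0^1\bigl(\frac{1}{t}+\cdots+\frac{1}{t^k}\bigr)\ud(P\tau)\leq P a_0-1=P-1$, i.e.\ $\int_0^1\bigl(\frac{1}{t}+\cdots+\frac{1}{t^k}\bigr)\ud\tau\leq 1-\frac{1}{P}$. Optimising over the free parameter $P>0$, such a $P$ exists exactly when $\int_0^1\bigl(\frac{1}{t}+\cdots+\frac{1}{t^k}\bigr)\ud\tau<1$; here the convention $\frac{1}{t}|_{t=0}=+\infty$ makes finiteness of the integral automatically force $\tau(\{0\})=0$, so the side condition $(P\tau)(\{0\})=0$ is subsumed. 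This is exactly condition (b) of the theorem, yielding both implications. The step I expect to require the most care is the first one: justifying rigorously that complete hyperexpansivity of the whole extension collapses to complete alternation of the single sequence at the new root $\omega_k$, and then tracking the scaling factor $P$ so that the non-strict inequality $\leq a_0-1$ of Lemma~\ref{lem:compalt-ext} sharpens, after optimisation, into the strict inequality $<1$.
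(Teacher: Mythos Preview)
Your proposal is correct and follows essentially the same route as the paper: both reduce the question to the completely alternating extension problem for the scaled sequence $\{P\,\|S_\blambda^n e_\omega\|^2\}_{n=0}^\infty$ and invoke Lemma~\ref{lem:compalt-ext}, with the free scaling parameter $P=|\lambda'^{(k)}_\omega|^2$ turning the non-strict inequality of that lemma into the strict inequality of condition~(b). Your presentation is slightly more streamlined in that you state the reduction ``$S_{\blambda'}$ completely hyperexpansive $\iff$ $\{\|S_{\blambda'}^n e_{\omega_k}\|^2\}_n$ completely alternating'' as a clean equivalence up front and then apply Lemma~\ref{lem:compalt-ext} symmetrically for both implications, whereas the paper carries out the (a)$\Rightarrow$(b) computation directly via the relation $\tau(A)=\int_A t^k\,|\lambda'^{(k)}_\omega|^{-2}\ud\tau_k(t)$ between the representing measures at $\omega$ and $\omega_k$; but this is a matter of packaging rather than a different idea.
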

\begin{proof}
Clearly, $S_\blambda\neq0$ because $A_1(0)=I \nleq 0$ (see \eqref{eq:hyperexA}).
\pImp{a}{b}
Let $S_{\blambda '}$ be a completely hyperexpansive $k$-step backward extension
of $S_\blambda$, i.e.\ a weighted shift on $\Tree\sqlow{k}$. Denote the root of
$\Tree\sqlow{k}$ by $\omega_k$ (cf.\ Definition~\ref{dfn:backward-ext}).
By the complete hyperexpansivity of $S_{\blambda '}$ and Lemma~\ref{lem:che-general}
%Corollary~\ref{cor:shift-hypexp}
the sequence $\{\|S_{\blambda '}^n e_{\omega_k}\|^2\}_{n=0}^\infty$
is a completely alternating sequence with the representing measure, say $\tau_k$.

Since $S_{\blambda '}^k e_{\omega_k} = \lambda'^{(k)}_\omega e_\omega \neq 0$,
we have 
\begin{equation}\label{eq:Somegak}
\|S_\blambda^n e_\omega\|^2 = \frac{\|S_{\blambda '}^{n+k}
 e_{\omega_k}\|^2}{\bigl|\lambda'^{(k)}_\omega\bigr|^2},\quad n\geq 0.
\end{equation}
Then the Borel measure $\tau$ given by 
\begin{equation}\label{eq:def-tau}
\tau(A):=\int_A t^k\,|\lambda'^{(k)}_\omega|^{-2}\ud\tau_k(t),
\quad A\in\Borel([0,1]), \end{equation}
is a representing measure for $\{\|S_\blambda^n e_\omega\|^2\}_{n=0}^\infty$ such
that $\tau(\{0\})=0$. Indeed, according to Corollary~\ref{cor:shift-hypexp}~(c),
\begin{align*}
\|S_\blambda^{n+1} e_\omega\|^2-\|S_\blambda^n e_\omega\|^2 
&\refoneq{eq:Somegak} \left( \|S_{\blambda'}^{n+k+1} e_{\omega_k}\|^2
-\|S_{\blambda'}^{n+k} e_{\omega_k}\|^2\right) \bigl|\lambda'^{(k)}_\omega\bigr|^{-2}\\
&\spacedeq{1.2ex} \bigl|\lambda'^{(k)}_\omega\bigr|^{-2} \int_0^1 t^{n+k} \ud\tau_k(t)\\
&\spacedeq{1.2ex} \int_0^1 t^n t^k \bigl|\lambda'^{(k)}_\omega\bigr|^{-2} \ud\tau_k(t)
\refoneq{eq:def-tau} \int_0^1 t^n \ud\tau(t)
\end{align*}
for $n\in\ZP$.
Further we have
\begin{align*}
\bigl|\lambda'^{(k)}_\omega\bigr|^2 = \|S_{\blambda '}^k e_{\omega_k}\|^2
&= 1+\int_0^1 \Bigl( 1+\ldots+t^{k-1} \Bigr)\ud\tau_k(t)\\
&\geq  1+\int_{(0,1]} \Bigl(\frac{1}{t^k}+\ldots+\frac{1}{t}\Bigr)t^k \ud\tau_k(t)\\
&=1+ \bigl|\lambda'^{(k)}_\omega\bigr|^2
\int_{(0,1]} \Bigl( \frac{1}{t^k}+\ldots+\frac{1}{t} \Bigr)\ud\tau(t)\\
&> \bigl|\lambda'^{(k)}_\omega\bigr|^2
\int_0^1 \Bigl(\frac{1}{t^k}+\ldots+\frac{1}{t} \Bigr) \ud\tau(t).
\end{align*}
Dividing both sides by $\bigl|\lambda'^{(k)}_\omega\bigr|^2$ yields (b).

\pImp{b}{a}
Set $C_0:=\int_0^1 \bigl(\frac{1}{t}+\ldots+\frac{1}{t^k}\bigr) \ud\tau(t)$.
Fix an arbitrary real number $C\geq\frac{1}{1-C_0}$. Consider the completely alternating
sequence $\{ a_n \}_{n=0}^\infty$, where $a_n:= C\|S_\blambda^n e_\omega\|^2$.
Its representing measure is $C\tau$ and
\[ \int_0^1\Bigl(\frac{1}{t}+\ldots+\frac{1}{t^k}\Bigr)\ud C\tau(t)\leq a_0-1. \]
It follows from Lemma~\ref{lem:compalt-ext} that the sequence can be extended to
a completely alternating sequence $\{a_{n-k} \}_{n=0}^\infty$
with positive terms such that $a_{-k}=1$.
%From this point the proof is parallel to that of Theorem~\ref{thm:extSub-shift}.

We can find weights $\lambda'_{\omega_l}$ for $l=0,\ldots,k-1$ such that
\begin{equation}\label{eq:back-ext1-che}
|\lambda'^{(j)}_{\omega_{k-j}}|^2 = \|S_{\blambda'}^j e_{\omega_k}\|^2= a_{j-k},\quad j=0,\ldots,k,
\end{equation}
where $\omega_0=\omega$, $\blambda'=\{\lambda'_v\}_{v\in V\sqlow{k}}$,
$\lambda'_{\omega_k}=0$ and $\lambda'_v=\lambda_v$ for $v\in V^\circ$.
This can be done e.g.\ by defining $\lambda'_{\omega_l}:= \sqrt{\frac{a_{-l}}{a_{-l-1}}}$
for $l=0,\ldots,k-1$. Clearly $S_\blambda \subset S_{\blambda'}$.

Observe that by \eqref{eq:back-ext1-che} we have
\[ |\lambda'^{(k)}_{\omega_0}|^2=a_0=C\|S^0_\blambda e_\omega\|^2=C,\]
and consequently
\[\|S_{\blambda '}^{k+n} e_{\omega_k}\|^2=
\|\lambda'^{(k)}_{\omega_0} S^n_{\blambda'}e_{\omega_0}\|^2=
C\|S_\blambda^n e_\omega\|^2=a_n,\quad n\geq 0.\]
This combined with \eqref{eq:back-ext1-che} yields
\[%\begin{equation}\label{eq:back-ext2}
	\|S_{\blambda '}^n e_{\omega_k}\|^2=a_{n-k},\quad n\geq 0,
\]%\end{equation}
which implies that for $j=0,\ldots,k$ and $n\geq 0$,
\[ \|S_{\blambda '}^n e_{\omega_j}\|^2 \refoneq{eq:back-ext1-che}
\frac{\|S_{\blambda '}^n S_{\blambda '}^{k-j} e_{\omega_k}\|^2}{a_{-j}}=
%\frac{\|S_{\blambda '}^{n+k-j} e_{\omega_k}\|^2}{a_{-j}}=
\frac{a_{n-j}}{a_{-j}}. \]
This shows that the sequence $\{\|S_{\blambda '}^n e_{\omega_j}\|^2\}_{n=0}^\infty$
for $j=1,\ldots,k$ is completely alternating being a scaled tail of
the sequence $\{a_{n-k}\}_{n=0}^\infty$ (see Proposition~\ref{pro:alt-seq}).

Since 
$\{\|S_{\blambda '}^n e_v\|^2\}_{n=0}^\infty = \{\|S_\blambda^n e_v\|^2\}_{n=0}^\infty$
for $v\in V$, from Corollary~\ref{cor:shift-hypexp} we conclude that the constructed
$k$-step backward extension is completely hyperexpansive. 
\end{proof}

Isometric weighted shifts are very special case of the completely hyperexpansive
ones. They clearly admit completely hyperexpansive $k$-step backward extensions for
any $k\in \N$. In fact this makes them unique in the whole class.
\begin{Cor}
Let $S_\blambda$ be a bounded completely hyperexpansive proper weighted shift on
a rooted directed tree. Then \NWSR
\begin{abece}
\item $S_\blambda$ is an isometry,
\item $S_\blambda$ admits a completely hyperexpansive $k$-step backward extension
for every $k\in \N$.
\end{abece}
\end{Cor}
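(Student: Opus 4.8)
My plan is to read both conditions through Theorem~\ref{thm:che-ext}, which characterises the existence of a completely hyperexpansive $k$-step backward extension of $S_\blambda$ by the representing measure $\tau$ of the completely alternating sequence $\{\|S_\blambda^n e_\omega\|^2\}_{n=0}^\infty$, with $\omega$ the root. Thus condition (b) is equivalent to asking that
\[ \int_0^1\Bigl(\tfrac1t+\ldots+\tfrac1{t^k}\Bigr)\ud\tau(t)<1\qquad\text{for every }k\in\N. \]
For the direction (a)$\Rightarrow$(b) I would simply note that an isometry has $\|S_\blambda^n e_\omega\|^2=\|e_\omega\|^2=1$ for all $n$; then Corollary~\ref{cor:shift-hypexp}(c) gives $\int_0^1 t^n\ud\tau(t)=0$ for every $n$, and taking $n=0$ forces $\tau=0$, so the displayed integral is $0<1$ for all $k$ and Theorem~\ref{thm:che-ext} applies. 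This is the direction already anticipated in the text.

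For (b)$\Rightarrow$(a) the first step is to deduce $\tau=0$. Here the elementary remark is that for every $t\in[0,1]$ one has $\tfrac1t+\ldots+\tfrac1{t^k}\geq k$ (each summand is $\geq1$, with the convention $t^{-j}=+\infty$ at $t=0$), whence the displayed integral is bounded below by $k\,\tau([0,1])$. If $\tau\neq0$, letting $k\to\infty$ would violate the bound $<1$; hence $\tau=0$, and by Corollary~\ref{cor:shift-hypexp}(c) again $\|S_\blambda^n e_\omega\|^2=1$ for every $n\geq0$.

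The second step is to propagate this from the root to all vertices. Recalling that $S_\blambda$ is an isometry precisely when $\|S_\blambda e_v\|^2=1$ for every $v\in V$ (the vectors $S_\blambda e_v$ have pairwise disjoint supports by Lemma~\ref{lem:tree-basics}(a), so $\|S_\blambda f\|^2=\sum_v|a_v|^2\|S_\blambda e_v\|^2$ by Corollary~\ref{cor:szift-ort}), I would expand the root norms level by level. Using Lemma~\ref{lem:powers}, the factorisation $\lambda_u^{(n+1)}=\lambda_u\lambda_v^{(n)}$ for $u\in\Chif(v)$, and $\Chif[n+1](\omega)=\bigsqcup_{v\in\Chif[n](\omega)}\Chif(v)$ (Lemma~\ref{lem:tree-basics}(c)), one gets for each $n\geq0$
\[ \|S_\blambda^{n+1}e_\omega\|^2=\sum_{v\in\Chif[n](\omega)}|\lambda_v^{(n)}|^2\,\|S_\blambda e_v\|^2,\qquad \|S_\blambda^n e_\omega\|^2=\sum_{v\in\Chif[n](\omega)}|\lambda_v^{(n)}|^2. \]
Both left sides equal $1$. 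Complete hyperexpansivity makes $\{\|S_\blambda^m e_v\|^2\}_m$ completely alternating, hence nondecreasing from $\|e_v\|^2=1$ (Proposition~\ref{pro:alt-seq}(c)), so $\|S_\blambda e_v\|^2\geq1$; and properness gives $|\lambda_v^{(n)}|^2>0$. Comparing the two identities as the equality case of $\sum|\lambda_v^{(n)}|^2\|S_\blambda e_v\|^2\geq\sum|\lambda_v^{(n)}|^2$ forces $\|S_\blambda e_v\|^2=1$ for every $v\in\Chif[n](\omega)$; since $V=\bigsqcup_{n\geq0}\Chif[n](\omega)$, this yields $\|S_\blambda e_v\|^2=1$ for all $v$, i.e.\ $S_\blambda$ is an isometry.

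I expect this last propagation to be the only real obstacle: Theorem~\ref{thm:che-ext} controls only the root sequence, so the work is to convert the scalar data $\|S_\blambda^n e_\omega\|^2\equiv1$ into the vertex-wise equalities $\|S_\blambda e_v\|^2=1$, and the weighted telescoping identity above — together with the monotonicity forced by complete hyperexpansivity and the strict positivity of weights coming from properness — is exactly what upgrades an inequality to an equality at each level.
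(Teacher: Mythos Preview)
Your proof is correct and follows essentially the same route as the paper. Both arguments use Theorem~\ref{thm:che-ext} together with the lower bound $\frac{1}{t}+\ldots+\frac{1}{t^k}\geq k$ to force $\tau=0$ and hence $\|S_\blambda^n e_\omega\|^2\equiv 1$, and then propagate this to all vertices via the identity $\|S_\blambda^{n+1}e_\omega\|^2=\sum_{v\in\Chif[n](\omega)}|\lambda_v^{(n)}|^2\|S_\blambda e_v\|^2$ combined with the expansivity bound $\|S_\blambda e_v\|^2\geq 1$ and properness. The only cosmetic differences are that for (a)$\Rightarrow$(b) the paper invokes Proposition~\ref{pro:trivial-back-ext} directly rather than checking the integral criterion, and in the propagation step the paper isolates one vertex at a time while you treat the whole level at once; these are the same argument in slightly different packaging.
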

\begin{proof}
\pImp{a}{b} It is immediate consequence of Proposition~\ref{pro:trivial-back-ext}
and the fact that isometries are completely hyperexpansive.

\pImp{b}{a} Let $\tau$ be the representing  measure of a completely
alternating sequence $\{\|S_\blambda^n e_\omega\|^2\}_{n=0}^\infty$, where
$\omega$ is the root of underlying tree $(V,\parf)$.
From Theorem~\ref{thm:che-ext} we see that
\[ \int_0^1 n \ud\tau \leq \int_0^1 \Bigl( \frac{1}{t}+\ldots+\frac{1}{t^n} \Bigr) \ud\tau(t) <1,
\quad n\in\N. \]
This implies that $\tau\equiv 0$ and hence
\begin{equation}\label{eq:root-norm1}
\|S_\blambda^n e_\omega\|=1,\quad n\in\ZP.
\end{equation}
Complete hyperexpansivity of $S_\blambda$ implies that
\[
\sum_{u\in\Chif(v)} |\lambda_u|^2 = \|S_\blambda e_v\|^2
\geq \|e_v\|^2=1, \quad v\in V.\]
(In particular $\Chif(v)\neq \emptyset$ for all $v \in V$.) It suffices
to prove that $\|S_\blambda e_v\|^2 \leq 1$ for every $v\in V$.
Pick $v\in V=\Des(\omega)$. There exists unique $n\in\ZP$ such that
$v\in\Chif[n](\omega)$ (cf.\ Lemma~\ref{lem:tree-basics}~\eqref{tbs:kChi}).
Denote also $B:=\Chif[n](\omega)\setminus \{v\}$. Then by \eqref{eq:root-norm1},
\begin{align*}
\|S_\blambda^{n}e_\omega\|^2=\|S_\blambda^{n+1}e_\omega\|^2
%&\refoneq{eq:Sdok} \biggl\|\sum_{u\in\Chif[n](\omega)}\lambda_u^{(n)} S_\blambda e_u\biggr\|^2 \\
% &=\sum_{u\in\Chif[n](\omega)}\sum_{w\in\Chif(u)}|\lambda_u^{(n)}|^2|\lambda_w|^2\\
&\refoneq{eq:normasumy}
\sum_{u\in\Chif[n](\omega)}|\lambda_u^{(n)}|^2\|S_\blambda e_u\|^2 \\
&\spacedeq{.9ex} \sum_{u\in B}|\lambda_u^{(n)}|^2\|S_\blambda e_u\|^2
+ |\lambda_v^{(n)}|^2\|S_\blambda e_v\|^2\\
&\spacedeq[\geq]{.9ex}
\sum_{u\in B}|\lambda_u^{(n)}|^2 + |\lambda_v^{(n)}|^2\|S_\blambda e_v\|^2\\
&\refoneq{eq:Sdok} \|S_\blambda^{n}e_\omega\|^2-|\lambda_v^{(n)}|^2
+ |\lambda_v^{(n)}|^2\|S_\blambda e_v\|^2\\
&\spacedeq{.9ex} \|S_\blambda^{n}e_\omega\|^2
+|\lambda_v^{(n)}|^2\bigl(\|S_\blambda e_v\|^2 -1\bigr).
\end{align*}
By properness, $|\lambda_v^{(n)}|^2>0$ and hence $\|S_\blambda e_v\|^2 -1 \leq 0$,
which completes the proof.
\end{proof}

\begin{Rem}\label{rem:proper-isometry}
Without the assumption of properness, we can easily construct a
non-isometric weighted shift admitting a completely hyperexpansive
$k$-step backward extension for arbitrary $k\in\N$. It suffices to
start from one isometric and one non-isometric completely hyperexpansive
weighted shift on disjoint rooted directed trees.
\begin{figure}[hbt]
\includegraphics[scale=1]{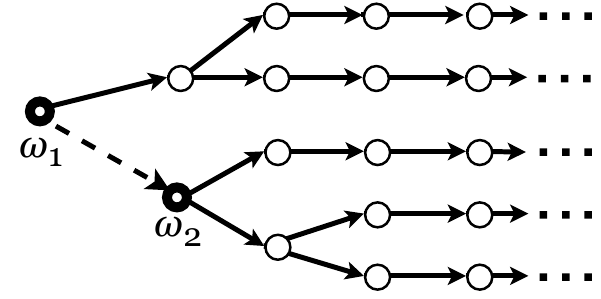}
\caption{Illustration to Remark~\ref{rem:proper-isometry}.}
\label{fig:klejenie}\end{figure}
Then modify the trees so that the root $\omega_2$ of the tree with non-isometric
weighted shift has a parent belonging to the other tree and preserve all the weights,
in particular $\lambda_{\omega_2}=0$ (see Figure~\ref{fig:klejenie}).
The weighted shift on the new tree is in fact an orthogonal sum of the original
weighted shifts.
\end{Rem}

Knowing the characterisation form Theorem~\ref{thm:che-ext} we are ready
to prove that the class of completely hyperexpansive bounded weighted
shifts on directed trees does not have the joint backward extension property.
This is shown in the following example.

\begin{Przyk}\label{ex:che-nonjoint}
Let $\Tree=(V_{2,0},\parf_2)$ be a directed tree consisting of two
infinite branches originating from the root (see $\mathscr{T}_{2,0}$ in
\cite[(6.2.10)]{szifty} and ``2-arm star'' in \cite[Definition~4.4]{pikul}).
Pick $k\in\N$ and $\alpha\in (0,\frac{1}{k})$ and define the weights
$\blambda=\{\lambda_v\}_{v\in V_{2,0}}$ by (cf.\ Figure~\ref{fig:che-nonjoint})
\[\lambda_0=0,\quad \lambda_{(j,n)}= \begin{cases}
\sqrt{\frac{n}{n-1}} & $if $j=1,\ n\geq 2\\
\sqrt{\alpha} & $if $ j=n=1\\
1 & $if $ j=2 %\\ 0 & $if $n=j=0
\end{cases},\quad (j,n)\in V_{2,0}. \]
Note that $\Tree$ is a rooted sum of the directed trees $\Tree_j:=\Tree_{((j,1)\to)}$
for $j=1,2$. The weighted shift $S_\blambda$ admits a completely hyperexpansive
$k$-step backward extension, while the weighted shift $S_{\blambda^\to_{(1,1)}}$
on $\Tree_1$ does not admit a $1$-step backward extension.
\end{Przyk}
\begin{figure}[hbt]
\includegraphics[scale=1.2]{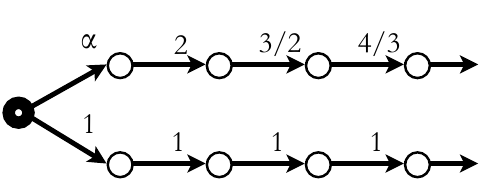}
\caption[The weighted shift from Example~\ref{ex:che-nonjoint}.]%
{The weighted shift from Example~\ref{ex:che-nonjoint}.
Number over each edge is the square of the corresponding weight.}%\rule{3em}{0pt}}
\label{fig:che-nonjoint}
\end{figure}

\begin{proof}
It can be easily verified that $\|S_\blambda^n e_{(1,1)}\|^2=n+1$ for $n\in\ZP$,
and hence the sequence $\{\|S_\blambda^n e_{(1,1)}\|^2\}_{n=0}^\infty$ is completely
alternating with the representing measure $\delta_1$ (see \eqref{eq:altmeasure0}).
Note that for any other vertex $v$ of $\Tree_{((1,1)\to)}$, the sequence 
$\{\|S_\blambda^n e_v\|^2\}_{n=0}^\infty$ is a scaled tail of
$\{\|S_\blambda^n e_{(1,1)}\|^2\}_{n=0}^\infty$, hence it is completely alternating
(see Proposition~\ref{pro:alt-seq}), and consequently the weighted shift
$S_{\blambda^\to_{(1,1)}}$ is completely hyperexpansive
(see Corollary~\ref{cor:shift-hypexp}).
According to Theorem~\ref{thm:che-ext}, $S_{\blambda^\to_{(1,1)}}$ does not admit
completely hyperexpansive $1$-step backward extension.

Clearly, $\|S_\blambda^n e_{(2,1)}\|^2=1$ for $n\in\ZP$, so the sequence
$\{\|S_\blambda^n e_{(2,1)}\|^2\}_{n=0}^\infty$ is completely alternating with
the zero representing measure. The equalities
\[ \|S_\blambda^n e_0\|^2=\alpha n + 1,\quad n\in\ZP, \]
imply that the sequence $\{\|S_\blambda^n e_0\|^2\}_{n=0}^\infty$ is completely
alternating with the representing measure $\alpha\delta_1$ and 
\[ \int_0^1 \Bigl(\frac{1}{t}+\ldots+\frac{1}{t^k}\Bigr)
 \ud\alpha\delta_1(t)=k \alpha < 1.\]
Using Corollary~\ref{cor:shift-hypexp} one can easily verify that $S_\blambda$ is
completely hyperexpansive and by Theorem~\ref{thm:che-ext} we obtain that it % $S_\blambda$
admits a completely hyperexpansive $k$-step backward extension.
\end{proof}

Even though the extendability of a weighted shift on the rooted sum does not
posses such elegant properties as in the case of subnormal and power hyponormal
operators, some positive results can be proven. For example, the existence of
$(k+1)$-step backward extensions for all members of a given family of
weighted shifts implies the existence of a $k$-step backward extension of
the enveloping operator on the rooted sum.

\begin{Thm}\label{thm:che-rootsum}
Let $J$ be a non-empty at most countable set and $k\in\ZP$.
For $j\in J$, let $S_j$ be a bounded weighted shift on a directed tree
$\Tree_j=(V_j,\parf_j)$ with root $\omega_j$. 
Assume that $S_j$'s are uniformly bounded and each of them
admits a completely hyperexpansive $(k+1)$-step backward extension.

Then there exists a system $\{ \theta_j \}_{j\in J}\subseteq \C\setminus\{0\}$
such that the weighted shift $S_\blambda$ on $\rootsum_{j\in J}\Tree_j=(V_J,\parf)$
with weights $\blambda =\{\lambda_v\}_{v\in V_J}$ determined~by
\begin{equation}\label{eq:weight-rest-che}
\lambda_{\omega_j}=\theta_j \text{ and } S_{\blambda^\to_{\omega_j}} = S_j
\text{ for all } j\in J,
\end{equation}
is bounded and admits a completely hyperexpansive $k$-step backward extension.
Moreover, if $k\geq 1$, then
\begin{equation}\label{eq:che-ext-bound}
\|S_\blambda\|\leq \max\biggl\{\sup_{j\in J}\|S_j\|,\, \sqrt{\frac{k+1}{k}}\biggr\}.\end{equation}
\end{Thm}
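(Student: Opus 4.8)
The plan is to reduce the whole statement to the completely alternating sequence attached to the new root $\omega$ of the rooted sum. Write $b^{(j)}_n:=\|S_j^n e_{\omega_j}\|^2$. Each $S_j$ is completely hyperexpansive (it lies in the class, since it admits a completely hyperexpansive $(k+1)$-step backward extension), so by Corollary~\ref{cor:shift-hypexp} the sequence $\{b^{(j)}_n\}_{n=0}^\infty$ is completely alternating; let $\tau^{(j)}$ be its representing measure. Applying Theorem~\ref{thm:che-ext} with $k+1$ gives $\tau^{(j)}(\{0\})=0$ and
\[ \sum_{i=1}^{k+1}\beta_i^{(j)}<1,\qquad\text{where }\ \beta_i^{(j)}:=\int_0^1 t^{-i}\ud\tau^{(j)}(t). \]
Since $S_\blambda e_\omega=\sum_{j\in J}\theta_j e_{\omega_j}$ and each $\eld{V_j}=\eld{\Des(\omega_j)}$ is invariant with $S_\blambda\rest{\eld{V_j}}=S_j$ (Proposition~\ref{pro:shift-basic}~\eqref{shbs:Des-inv}), I would use \eqref{eq:normasumy} to obtain
\[ \|S_\blambda^n e_\omega\|^2=\sum_{j\in J}|\theta_j|^2\, b^{(j)}_{n-1},\quad n\geq 1,\qquad \|S_\blambda^0 e_\omega\|^2=1. \]

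Next I would choose the $\theta_j$ so that the representing measure of the root sequence carries no mass at $0$. Using $\tau^{(j)}(\{0\})=0$, for $n\geq1$ the increment $\|S_\blambda^{n+1}e_\omega\|^2-\|S_\blambda^n e_\omega\|^2=\sum_j|\theta_j|^2\int_0^1 t^{n-1}\ud\tau^{(j)}$ equals $\int_0^1 t^n\ud\sigma$, where $\sigma:=\sum_{j\in J}|\theta_j|^2\nu^{(j)}$ and $\ud\nu^{(j)}:=t^{-1}\ud\tau^{(j)}$ is a finite positive measure on $(0,1]$. Matching the remaining $n=0$ increment $\sum_j|\theta_j|^2-1$ with $\sigma([0,1])=\sum_j|\theta_j|^2\beta_1^{(j)}$ forces the normalisation
\[ \sum_{j\in J}|\theta_j|^2\,\bigl(1-\beta_1^{(j)}\bigr)=1,\qquad(\star) \]
under which $\sigma$ (atom-free at $0$) is the representing measure of $\{\|S_\blambda^n e_\omega\|^2\}_n$, so this sequence is completely alternating. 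As the inner sequences $\{\|S_\blambda^n e_v\|^2\}=\{\|S_j^n e_v\|^2\}$ are completely alternating for $v\in V_j$, Corollary~\ref{cor:shift-hypexp} then gives that $S_\blambda$ is completely hyperexpansive. Since $\beta_1^{(j)}<1$, one has $1-\beta_1^{(j)}>0$, so $(\star)$ is realisable with all $\theta_j\neq0$ and $\sum_j|\theta_j|^2<\infty$ (rescale any positive summable sequence); the latter secures boundedness of $S_\blambda$ via Proposition~\ref{pro:shift-basic}~\eqref{shbs:bounded}. This already settles the case $k=0$, where being completely hyperexpansive is exactly admitting a $0$-step extension.

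For $k\geq1$, admitting a completely hyperexpansive $k$-step extension follows from Theorem~\ref{thm:che-ext} once I check $\int_0^1(\tfrac1t+\dots+\tfrac1{t^k})\ud\sigma<1$. The crux computation is
\[ \int_0^1\Bigl(\frac1t+\dots+\frac1{t^k}\Bigr)\ud\sigma =\sum_{j\in J}|\theta_j|^2\int_0^1\Bigl(\frac1{t^2}+\dots+\frac1{t^{k+1}}\Bigr)\ud\tau^{(j)} =\sum_{j\in J}|\theta_j|^2\Bigl(\sum_{i=1}^{k+1}\beta_i^{(j)}-\beta_1^{(j)}\Bigr). \]
Comparing with $(\star)$, the right-hand side is $\le 1-|\theta_{j_0}|^2\bigl(1-\sum_{i=1}^{k+1}\beta_i^{(j_0)}\bigr)<1$ for any fixed $j_0\in J$, the strictness coming from $\sum_{i=1}^{k+1}\beta_i^{(j_0)}<1$ (this single positive term secures strictness even for infinite $J$). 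Finally, for the norm bound I would exploit monotonicity: on $(0,1]$ one has $t^{-1}\le\dots\le t^{-(k+1)}$, so $\beta_i^{(j)}$ is nondecreasing in $i$, whence $(k+1)\beta_1^{(j)}\le\sum_{i=1}^{k+1}\beta_i^{(j)}<1$ and $1-\beta_1^{(j)}>\tfrac{k}{k+1}$. Substituting into $(\star)$ yields $\sum_j|\theta_j|^2<\tfrac{k+1}{k}$, and since $\|S_\blambda\|^2=\max\{\sum_j|\theta_j|^2,\ \sup_j\|S_j\|^2\}$ by Proposition~\ref{pro:shift-basic}~\eqref{shbs:bounded}, inequality \eqref{eq:che-ext-bound} follows.

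I expect the main obstacle to be the second paragraph: recognising that a completely hyperexpansive backward extension forces the root's representing measure to be atom-free at $0$, and that this requirement is precisely the normalisation $(\star)$. Once the transformation $\ud\nu^{(j)}=t^{-1}\ud\tau^{(j)}$ is set up — it is exactly what converts the $(k+1)$-step data of the individual $S_j$ into the $k$-step data of the amalgamated $S_\blambda$ — the extension estimate, the complete hyperexpansivity, and the sharp norm bound all fall out cleanly from $(\star)$ and the monotonicity of the $\beta_i^{(j)}$.
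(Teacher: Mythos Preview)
Your proof is correct and follows essentially the same route as the paper: the normalisation $(\star)$ is the paper's condition $(*)$ (with $\beta_1^{(j)}=C_j$ and $\sum_{i=1}^{k+1}\beta_i^{(j)}=D_j$), the measure $\sigma=\sum_j|\theta_j|^2\nu^{(j)}$ with $\ud\nu^{(j)}=t^{-1}\ud\tau^{(j)}$ is exactly the paper's $\tau$, and the extension estimate, complete hyperexpansivity, and norm bound are obtained by the same computations. The only cosmetic difference is that you isolate a single term $j_0$ to secure strictness of the key inequality, whereas the paper writes it as $\sum_j a_j(D_j-C_j)<\sum_j a_j(1-C_j)=1$ directly.
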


The following fact is going to be useful in the forthcoming
reasonings using representing measures. We skip its standard
measure-theoretic proof.
%----- leamt teoriomiarowy --
\begin{Lem}\label{lem:measure}
Let $\{\mu_j\}_{j\in J}$ be a family of finite measures on a fixed
measure space $(X, \mathcal A)$ and $\{a_j\}_{j\in J}$ be non-negative
real numbers such that $\sum_{j\in J} a_j\mu_j(X) <\infty$.
Then $\tilde\mu$ defined by
\[ \tilde\mu(A):=\sum_{j\in J} a_j\mu_j(A),\quad A\in\mathcal A, \]
is a finite measure on $(X, \mathcal A)$
and for every $\mathcal A$-measurable $f\colon X\to [0,+\infty]$
the following equality holds
\begin{equation}\label{eq:measure}
\int_X f \ud\tilde\mu = \sum_{j\in J} a_j \int_X f \ud\mu_j.
\end{equation}
\end{Lem}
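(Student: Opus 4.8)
The plan is to establish the two assertions separately, relying only on countable additivity of the individual $\mu_j$, the monotone convergence theorem, and Tonelli's theorem for non-negative double series. Since $\sum_{j\in J}a_j\mu_j(X)<\infty$, at most countably many summands are nonzero, so I may assume $J$ countable and freely rearrange and interchange series with non-negative terms throughout.

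First I would verify that $\tilde\mu$ is a finite measure. That $\tilde\mu(\emptyset)=\sum_{j\in J}a_j\mu_j(\emptyset)=0$ is immediate, and $\tilde\mu(X)=\sum_{j\in J}a_j\mu_j(X)<\infty$ is exactly the hypothesis. For countable additivity, given pairwise disjoint $\{A_n\}_{n=1}^\infty\subseteq\mathcal A$, I would compute
\[
\tilde\mu\Bigl(\bigcup_{n}A_n\Bigr)=\sum_{j\in J}a_j\sum_{n}\mu_j(A_n)
=\sum_{n}\sum_{j\in J}a_j\mu_j(A_n)=\sum_{n}\tilde\mu(A_n),
\]
using countable additivity of each $\mu_j$ in the first step and Tonelli's theorem (Fubini for the counting measures on $J$ and on $\N$) to swap the two non-negative series in the second.

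Second, for \eqref{eq:measure} I would run the standard approximation-by-simple-functions argument. For an indicator $f=\mathbf 1_A$ the identity is precisely the definition of $\tilde\mu$, and by linearity it extends to every non-negative simple function $s=\sum_{i=1}^m c_i\mathbf 1_{A_i}$, where only a finite sum over $i$ is exchanged with the sum over $j$. For a general measurable $f\colon X\to[0,+\infty]$ I would choose simple functions $0\leq s_1\leq s_2\leq\cdots$ with $s_n\uparrow f$ pointwise and invoke the monotone convergence theorem for $\tilde\mu$ and for each $\mu_j$. The one point demanding a little care — and the only "obstacle", routine as it is — is the final interchange of the limit in $n$ with the sum over $j$:
\[
\int_X f\ud\tilde\mu=\lim_{n\to\infty}\sum_{j\in J}a_j\int_X s_n\ud\mu_j
=\sum_{j\in J}a_j\lim_{n\to\infty}\int_X s_n\ud\mu_j=\sum_{j\in J}a_j\int_X f\ud\mu_j.
\]
This is legitimate because each term $a_j\int_X s_n\ud\mu_j$ is non-negative and non-decreasing in $n$, so the monotone convergence theorem for series (again Tonelli with counting measure on $J$) permits pushing the supremum over $n$ inside the summation. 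This completes the proof.
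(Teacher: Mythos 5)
Your proof is correct. The paper itself offers no argument for this lemma (it explicitly says ``we skip its standard measure-theoretic proof''), and what you wrote is exactly that standard argument: Tonelli for non-negative double series gives countable additivity of $\tilde\mu$, and simple-function approximation plus monotone convergence (applied both to the integrals and, via the counting measure on $J$, to the series) gives \eqref{eq:measure}; your preliminary reduction to countable $J$ is also sound, since every discarded index $j$ has $a_j\mu_j(X)=0$ and hence contributes $0$ to both sides, using the usual convention $0\cdot(+\infty)=0$.
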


\begin{proof}[Proof of Theorem~\ref{thm:che-rootsum}]
By Theorem~\ref{thm:che-ext}, the representing measure of the completely alternating
sequence $\{\|S_j^n e_{\omega_j}\|^2\}_{n=0}^\infty$, say $\tau_j$,
satisfies
\[ D_j:=\int_0^1\Bigl(\frac{1}{t}+\ldots+\frac{1}{t^{k+1}}\Bigr)\ud\tau_j(t)<1,
\quad j\in J.\]
Consequently, also
\[ C_j:=\int_0^1 \frac{1}{t}\ud\tau_j(t)\leq \frac{1}{k+1}D_j < \frac{1}{k+1},
\quad j\in J.\]
Therefore $\tau_j(\{0\})=0$ for $j\in J$.
Pick positive real numbers $\{a_j\}_{j\in J}$ such that\footnote{Except for the case
$k=0$, the inequality ($**$) follows from ($*$).}
\[ \mathrm{(*)}\ \sum_{j\in J} a_j (1-C_j)=1,\qquad
\mathrm{(**)}\ \sum_{j\in J} a_j < \infty. \]
Take $\{\theta_j\}_{j\in J}$ such that $|\theta_j|^2=a_j$ for $j\in J$.
Let $\blambda$ be as in \eqref{eq:weight-rest-che} with $\lambda_\omega=0$.
Then, by assumption,
\begin{equation}\label{eq:normy-rest}
\sum_{u\in \Chif(v)}|\lambda_u|^2 =\|S_j e_v\|^2\leq \sup_{j\in J}\|S_j\|^2 <\infty,
\quad j\in J,\ v\in V_j.
\end{equation}
It follows from ($**$) that $\sum_{u\in \Chif(\omega)}|\lambda_u|^2 <\infty$,
hence, by Proposition~\ref{pro:shift-basic}~\eqref{shbs:bounded}, the operator
$S_\blambda$ is bounded.

Define the Borel measure $\tau$ on $[0,1]$ by
\[ \tau(A):= \sum_{j\in J}a_j\int_A \frac{1}{t} \ud\tau_j(t),
\quad A\in\Borel([0,1]).\]
Since $C_j<1$ for $j\in J$, ($**$) implies that $\tau$ is a finite measure.

By Lemma~\ref{lem:measure} we get (recall that $\tau_j(\{0\})=0$ for every $j\in J$)
\begin{align*}
\|S_\blambda^{n+1} e_\omega\|^2
&\refoneq{eq:normasumy}\sum_{j\in J}a_j \|S_j^n e_{\omega_j}\|^2\\
&\refoneq{eq:altmeasure0} \sum_{j\in J} a_j\biggl(1+ \int_0^1 \bigl(1+\ldots+ t^{n-1}\bigr) \ud\tau_j(t)\biggr) \\
&\spacedeq{1ex} \sum_{j\in J} a_j\biggl(1 -\int_0^1\frac{1}{t}\ud\tau_j(t)+
\int_0^1 \bigl(1+\ldots+t^n\bigr)\frac{1}{t}\ud\tau_j(t)\biggr)\\
&\spacedeq{1ex} \sum_{j\in J} a_j (1 -C_j)
+\sum_{j\in J}a_j\int_0^1 (1+\ldots+t^n)\frac{1}{t}\ud\tau_j(t)\\
&\spacedeq[\stackrel{(*)}{=}]{.9ex} 1+\int_0^1 \bigl( 1+\ldots+t^n \bigr) \ud\tau(t),\quad n\geq 1.
\end{align*}
Observe also that
\[ \|S_\blambda e_\omega\|^2 = \sum_{j\in J}a_j
= 1+\sum_{j\in J}a_j C_j = 1+\int_0^1 t^0 \ud\tau(t). \]
Summarising, we have proven that $\{ \|S_\blambda^n e_\omega\|^2 \}_{n=0}^\infty$
is a completely alternating sequence with the representing measure $\tau$. 
For $j\in J$, $v\in V_j$ and $n\in\ZP$, we have
$\|S^n_\blambda e_v\|^2 = \|S_j^n e_v\|^2$,
and since $S_j$ is completely hyperexpansive,
$\{ \|S^n_\blambda e_v\|^2 \}_{n=0}^\infty$ is a completely alternating sequence.
According to Corollary~\ref{cor:shift-hypexp}, we see that $S_\blambda$ is
completely hyperexpansive. In the case $k=0$ the proof is complete.

Now suppose that $k\geq 1$. Then
\[%\begin{equation}\label{eq:root-bound}
\sum_{j\in J} a_j = \sum_{j\in J}a_j (1-C_j) \frac{1}{1-C_j}
\stackrel{(*)}{<} \frac{1}{1-\frac{1}{k+1}}= \frac{k+1}{k}.
\]%\end{equation}
This gives $\|S_\blambda e_\omega\|^2\leq \frac{k+1}{k}$, which combined with
\eqref{eq:normy-rest} proves the inequality \eqref{eq:che-ext-bound}.

Using Lemma~\ref{lem:measure} and the fact that $\tau(\{0\})=0$, we get
\begin{align*}\nonumber
\int_0^1 \Bigl(\frac{1}{t}+\ldots+\frac{1}{t^k}\Bigr)\ud\tau(t) \nonumber
&= \sum_{j\in J}a_j \int_0^1 \Bigl(\frac{1}{t}+\ldots+\frac{1}{t^k}\Bigr) \frac{1}{t}\ud\tau_j(t)\\
\nonumber &= \sum_{j\in J}a_j \int_0^1 \Bigl(\frac{1}{t^2}+\ldots+\frac{1}{t^{k+1}}\Bigr) \ud\tau_j(t)\\
\nonumber &= \sum_{j\in J}a_j (D_j - C_j)\\
&< \sum_{j\in J}a_j (1 - C_j) \overset{(*)}{=} 1. %\label{eq:che-power-k}
\end{align*}
This together with Theorem~\ref{thm:che-ext} proves that
$S_\blambda$ admits a completely hyperexpansive $k$-step backward extension.
\end{proof}

\begin{Rem}
Observe that the above theorem yields the implication
(c)$\Rightarrow$(a)$\land$(b) from Definition~\ref{dfn:ext-prop}
(JBEP) with $c_k=\sqrt{\frac{k+1}{k}}$. Since the proof of
(b)$\Rightarrow$(a) in Theorem~\ref{thm:joint-extension} does not use
the other part of JBEP, the implication is valid for the class of
completely hyperexpansive weighted shifts.
\end{Rem}

\begin{Rem}
Note that in the case of weighted shifts bounded from below the
implication (a)$\Rightarrow$(c) in the joint backward extension
property is still true, unlike the opposite one (see
Proposition~\ref{pro:below-fail}). This is a different way of
violating JBEP from what we have proven about
completely hyperexpansive weighted shifts.
\end{Rem}

As shown in Example~\ref{ex:che-nonjoint}, even if not all members of a given
family of weighted shifts admit completely hyperexpansive backward extensions
they may still be jointly extended to a rooted sum of the underlying trees.
In fact, at least one of them has to admit appropriate backward extension,
as described in the following theorem.

\begin{Thm}\label{thm:che-joint2}
Let $S_\blambda$ be a bounded proper weighted shift on a directed
tree  $\Tree$ with root $\omega$ admitting a completely hyperexpansive $k$-step
backward extension for some $k\in\ZP$.
Then for every $n\in\ZP$, there exists $v\in\Chif[n](\omega)$ such that
the weighted shift $S_{\blambda^\to_v}$ on $\Tree_{(v\to)}$ admits a
completely hyperexpansive $(n+k)$-step backward extension.
\end{Thm}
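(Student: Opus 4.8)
The plan is to recast the statement in terms of representing measures and close it with an averaging (pigeonhole) argument over the $n$-th children of $\omega$. Since admitting a completely hyperexpansive $k$-step backward extension entails that $S_\blambda$ itself is completely hyperexpansive, Corollary~\ref{cor:shift-hypexp} attaches to every vertex $v$ a representing measure $\tau_v$ of the completely alternating sequence $\{\|S_\blambda^m e_v\|^2\}_{m=0}^\infty$; moreover, as $S_{\blambda^\to_v}$ is the restriction of $S_\blambda$ to the invariant subspace $\eld{\Des(v)}$, this same $\tau_v$ is the representing measure at the root of $\Tree_{(v\to)}$. The case $n=0$ is settled by taking $v=\omega$, so I fix $n\geq 1$; then $n+k\geq 1$ and, by Theorem~\ref{thm:che-ext}, it is enough to find one $v\in\Chif[n](\omega)$ with $\int_0^1\bigl(\frac{1}{t}+\ldots+\frac{1}{t^{n+k}}\bigr)\ud\tau_v(t)<1$.

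First I would relate the level-$n$ measures to the root measure $\tau_\omega$. By \eqref{eq:Sdok} and \eqref{eq:normasumy}, $\|S_\blambda^{n+l}e_\omega\|^2=\sum_{v\in\Chif[n](\omega)}|\lambda_v^{(n)}|^2\,\|S_\blambda^{l}e_v\|^2$ for every $l\geq 0$, and in particular $\sum_{v\in\Chif[n](\omega)}|\lambda_v^{(n)}|^2=\|S_\blambda^{n}e_\omega\|^2<\infty$. Hence the sum $\tilde\tau:=\sum_{v\in\Chif[n](\omega)}|\lambda_v^{(n)}|^2\tau_v$ is a finite measure (Lemma~\ref{lem:measure}, using $\tau_v([0,1])=\|S_\blambda e_v\|^2-1\leq\|S_\blambda\|^2$), and its moments are, by Corollary~\ref{cor:shift-hypexp}, $\int_0^1 t^l\ud\tilde\tau=\|S_\blambda^{n+l+1}e_\omega\|^2-\|S_\blambda^{n+l}e_\omega\|^2=\int_0^1 t^l\,t^n\ud\tau_\omega(t)$ for all $l\geq 0$. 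Thus $\tilde\tau$ and the measure $A\mapsto\int_A t^n\ud\tau_\omega$ share all moments, so by the uniqueness argument behind Corollary~\ref{cor:alt-unique} they coincide: $\tilde\tau$ has density $t^n$ with respect to $\tau_\omega$.

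Next I would integrate the singular weight $g(t):=\frac{1}{t}+\ldots+\frac{1}{t^{n+k}}$ (with $g(0)=+\infty$) against $\tilde\tau$. Lemma~\ref{lem:measure} and the density just found give
\[ \sum_{v\in\Chif[n](\omega)}|\lambda_v^{(n)}|^2\int_0^1 g\ud\tau_v=\int_0^1 g\ud\tilde\tau=\int_0^1 g(t)\,t^n\ud\tau_\omega(t). \]
On $(0,1]$ we have the identity $t^n g(t)=(1+t+\ldots+t^{n-1})+\bigl(\frac{1}{t}+\ldots+\frac{1}{t^k}\bigr)$, while the atom at $0$ is harmless since $0^n g(0)=0$ for $n\geq 1$. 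The polynomial part integrates, via \eqref{eq:altmeasure0}, to at most $\|S_\blambda^n e_\omega\|^2-1$, and the singular part is strictly less than $1$: for $k\geq 1$ this is precisely condition (b) of Theorem~\ref{thm:che-ext} for $S_\blambda$ (which also forces $\tau_\omega(\{0\})=0$), and for $k=0$ it is the empty sum $0<1$. Therefore
\[ \sum_{v\in\Chif[n](\omega)}|\lambda_v^{(n)}|^2\int_0^1 g\ud\tau_v<\|S_\blambda^n e_\omega\|^2=\sum_{v\in\Chif[n](\omega)}|\lambda_v^{(n)}|^2. \]

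Finally, properness of $S_\blambda$ makes every coefficient $|\lambda_v^{(n)}|^2$ strictly positive, and their finite sum is positive; were $\int_0^1 g\ud\tau_v\geq 1$ for all $v$, the left-hand side above would dominate the right-hand side, a contradiction. Hence some $v\in\Chif[n](\omega)$ satisfies $\int_0^1 g\ud\tau_v<1$, and Theorem~\ref{thm:che-ext} then yields the sought completely hyperexpansive $(n+k)$-step backward extension of $S_{\blambda^\to_v}$. I expect the only delicate point to be the bookkeeping at $t=0$: justifying the density identity $\tilde\tau=t^n\tau_\omega$ and the splitting of $\int_0^1 g\,t^n\ud\tau_\omega$ in the presence of a possible atom $\tau_\omega(\{0\})$ (which vanishes automatically when $k\geq 1$ but need not when $k=0$); the conventions $\infty\cdot 0=0$ and $0^n=0$ ($n\geq 1$) are what keep this clean.
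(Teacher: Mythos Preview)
Your argument is correct and takes a genuinely different route from the paper's. The paper first reduces to the case $k=0$ by passing to the $k$-step backward extension on $\Tree\sqlow{k}$, and then proceeds by induction on $n$: assuming a vertex $u\in\Chif[n](\omega)$ such that $S_{\blambda^\to_u}$ admits an $n$-step extension, it descends one level by comparing the representing measure $\rho$ of $\{\|S_\blambda^j e_u\|^2\}_{j\geq 0}$ with the measure $\tau=\sum_{v\in\Chif(u)}|\lambda_v|^2\tau_v$ of the shifted sequence $\{\|S_\blambda^{j+1}e_u\|^2\}_{j\geq 0}$ via Lemma~\ref{lem:compalt-ext}, and then runs the averaging argument over $\Chif(u)$. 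You bypass both the reduction and the induction by establishing the global density identity $\sum_{v\in\Chif[n](\omega)}|\lambda_v^{(n)}|^2\tau_v=t^n\,\tau_\omega$ in one stroke and exploiting the algebraic splitting $t^n g(t)=(1+\ldots+t^{n-1})+(t^{-1}+\ldots+t^{-k})$ on $(0,1]$. Your approach is shorter and more transparent; the paper's inductive descent, on the other hand, implicitly produces a \emph{chain} $\omega=u_0,u_1,\ldots$ with $u_{m+1}\in\Chif(u_m)$ such that each $S_{\blambda^\to_{u_m}}$ admits an $(m+k)$-step extension, a slightly stronger structural conclusion that your global averaging over $\Chif[n](\omega)$ does not directly yield (nor does the statement require).
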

\begin{proof}
Since (see Definition~\ref{dfn:backward-ext})
\[ \Chit{n}{\Tree}(\omega)=\Chit{n+k}{\Tree\sqlow{k}}(\omega_k),\quad
n\in\ZP, \] we can reduce ourselves to the case when $k=0$,
replacing $S_\blambda$ by its completely hyperexpansive extension
on $\Tree\sqlow{k}$ if necessary.

We proceed by induction on $n\in\ZP$. Clearly, for $n=0$ the claim
is satisfied.

Assume $n\geq 0$ and there exists $u\in\Chif[n](\omega)$ such that
the weighted shift $S_{\blambda^\to_u}$ on $\Tree_{(u\to)}$ admits a
completely hyperexpansive $n$-step backward extension.
We want to prove that for some $v\in\Chif[n+1](\omega)$ the weighted
shift $S_{\blambda^\to_v}$ admits a completely hyperexpansive $(n+1)$-step
backward extension.
We will find such $v$ in $\Chif(u)\subseteq\Chif[n+1](\omega)$.
Denote by $\tau$ the representing measure of the completely alternating sequence
$\{\|S_\blambda^{j+1} e_u\|^2\}_{j=0}^\infty$ (see Lemma~\ref{lem:che-general}).
This sequence admits an extension to a completely alternating sequence
$\{\|S_\blambda^j e_u\|^2\}_{j=0}^\infty$, satisfying $\|S_\blambda^0 e_u\|^2=1$,
hence by Lemma~\ref{lem:compalt-ext}, $\tau(\{0\})=0$,
\begin{equation}\label{eq:int+norm}
\int_0^1 \frac{1}{t} \ud\tau(t) \leq \|S_\blambda e_u\|^2 - 1
\end{equation}
and the representing measure $\rho$ for the sequence
$\{\|S_\blambda^j e_u\|^2\}_{j=0}^\infty$ is given by the formula:
\[ \rho(A)=\int_A\frac{1}{t}\ud\tau(t) +\biggl(\|S_\blambda e_u\|^2
- 1-\int_0^1\frac{1}{t}\ud\tau(t)\biggr)\delta_0(A),\ \ A\in\Borel([0,1]).\]

For $v\in \Chif(u)$, let $\tau_v$ be the representing measure of the
completely alternating sequence $\{\|S_\blambda^j e_v\|^2\}_{j=0}^\infty$. 
We can define the Borel measure $\tau_0$ on $[0,1]$ as follows
\[ \tau_0(A) := {\textstyle \sum_{v\in\Chif(u)}}|\lambda_v|^2 \tau_v(A), 
\quad A\in\Borel([0,1]). \]
Observe that
\[ \tau_v([0,1])=\int_0^1 1\ud\tau_v
= \|S_\blambda e_v\|^2-1 \leq \|S_\blambda\|^2-1, \quad v\in \Chif(u).\]
This fact, combined with convergence of the series $\sum_{v\in\Chif(u)}|\lambda_v|^2$
gives that the measure $\tau_0$ is finite.
Then
\begin{align*}%\nonumber
\|S_\blambda^{j+1} e_u\|^2
&\,\refoneq{eq:normasumy}\, \sum_{v\in\Chif(u)}|\lambda_v|^2 \|S_\blambda^j e_v\|^2\\
&\,\refoneq{eq:altmeasure0}\, \sum_{v\in\Chif(u)}|\lambda_v|^2
\biggl(1+\int_0^1 \bigl(1+\ldots+t^{j-1}\bigr) \ud\tau_v(t) \biggr) \\
&\refoneq{eq:measure} \|S_\blambda e_u\|^2
+ \int_0^1 \bigl(1+\ldots+t^{j-1}\bigr)\ud\tau_0(t),
\quad j\geq 1.
\end{align*}
This means that $\tau_0$ is a representing measure for the completely alternating
sequence $\{ \|S_\blambda^{j+1} e_\omega\|^2 \}_{j=0}^\infty$
and, by Corollary~\ref{cor:alt-unique}, $\tau_0=\tau$.
This in particular means that $\tau_0(\{0\})=0$, and consequently $\tau_v(\{0\})=0$
for $v\in \Chif(u)$.
 
If $n\geq 1$, then we use the existence of a completely hyperexpansive $n$-step backward
extension of $S_{\blambda^\to_u}$ (\footnote{Note that $S_{\blambda^\to_u}^j e_u
= S_\blambda^j e_u$ for all $j\in\ZP$.}) and Theorem~\ref{thm:che-ext} to obtain that
$\rho(\{0\})=0$, which implies $\rho(A)=\int_A\frac{1}{t}\ud\tau(t)$ for any
$A\in\Borel([0,1])$. By the same theorem
\begin{equation}\label{eq:che-tau-k}
1> \int_0^1 \Bigl(\frac{1}{t}+\ldots+\frac{1}{t^n}\Bigr) \ud\rho(t)
= \int_0^1 \Bigl(\frac{1}{t^2}+\ldots+\frac{1}{t^{n+1}}\Bigr) \ud\tau(t).
\end{equation}
Set $D_v:=\int_0^1 \left( \frac{1}{t}+\ldots+\frac{1}{t^{n+1}} \right)\ud\tau_v(t)$
for $v\in\Chif(u)$. According to Theorem~\ref{thm:che-ext}, it suffices to prove that
$D_{v_0} <1$ for some $v_0\in\Chif(u)$.
Note that (cf.\ Lemma~\ref{lem:measure})
\begin{align*}
%0&\refoneq[>]{eq:che-tau-k}\int_0^1 t^{-2}+\ldots+t^{-n-1} \ud\tau(t) -1\\
0&\spacedeq[\stackrel{(\dagger)}{>}]{1.3ex}\int_0^1 \Bigl(\frac{1}{t}+\ldots+\frac{1}{t^{n+1}}\Bigr) \ud\tau(t)
- \int_0^1 \frac{1}{t} \ud\tau(t) -1\\
&\refoneq[\geq]{eq:int+norm} \sum_{v\in\Chif(u)}|\lambda_v|^2
\int_0^1 \Bigl(\frac{1}{t}+\ldots+\frac{1}{t^{n+1}}\Bigr) \ud\tau_v(t)
-\sum_{v\in\Chif(u)}|\lambda_v|^2\\
&\spacedeq{1.3ex}\sum_{v\in\Chif(u)}|\lambda_v|^2 (D_v - 1),
\end{align*}
where ($\dagger$) is a consequence of \eqref{eq:che-tau-k} if $n\geq 1$ and is
obvious for $n=0$.
At least one of the summands $|\lambda_v|^2 (D_v - 1)$ has to be negative,
hence, by properness of $S_\blambda$, $D_{v_0} <1$ for some $v_0\in\Chif(u)\subseteq\Chif[n+1](\omega)$.
This completes the proof.
\end{proof}

We finish the article listing all classes of weighted shifts on directed trees
for which we have decided whether they satisfy JBEP:\medskip

{\centering\begin{tabular}[hbt]{|l|l|}\hline
Classes having JBEP \quad \rule{0pt}{3ex}& Classes without JBEP
\\\hline
all bounded \rule{0pt}{3ex} & bounded from below\\
isometric & quasinormal  \\
contractive  & completely hyperexpansive\\
expansive & \\
hyponormal & \\
power hyponormal & \\
subnormal & \\ \hline
\end{tabular}\par}

\subsection*{Acknowledgements}
Results presented in this paper are a substantial part of my
dissertation written under supervision of professor Jan Stochel,
to whose guidance I am very grateful.

%%%%%%%%%%%%%%%%%%%%%%%%%%%%%%%%%%%%%%%%%%%%%%%%%%%%%%%%%%%%%%%%%%%%%%%%%%%%%%%%%%%


\begin{thebibliography}{28}
\bibitem{agler} J.\ Agler,
\emph{Hypercontractions and subnormality},
J.\ Operator Theory \textbf{13} (1985), 203--217.

\bibitem{athav} A.\ Athavale,
\emph{On completely hyperexpansive operators},
Proc.\ Amer.\ Math.\ Soc.\ {\bf 124} (1996), 3745-–3752.

\bibitem{berg}
C.\ Berg, J.\,P.\,R.\ Christensen, P.\ Ressel,
\emph{Harmonic Analysis on Semigroups}, Springer, Berlin, 1984.

\bibitem{budz}
P.\ Budzy\'nski, Z.\,J.\ Jabło\'nski, I.\,B.\ Jung, J.\ Stochel,
\emph{Subnormality of unbounded composition operators
over one-circuit directed graphs: Exotic examples},
Adv.\ Math. \textbf{310} (2017), 484--556.

\bibitem{Con} J.\,B.\ Conway,
\emph{The theory of subnormal operators},
Mathematical Surveys and Monographs, 36, Amer.\ Math.\ Soc.,
Providence, RI, 1991.

\bibitem{curtoq} R.\,E.\ Curto,
\textit{Quadratically hyponormal weighted shifts},
Integr.\ Equ.\ Oper.\ Theory {\bf 13} (1990), 49--66.

\begin{comment}
\bibitem{curto1k} R.\,E.\ Curto, S.\,H.\ Lee, J.\ Yoon,
\textit{$k$-hyponormality of multivariable weighted shifts},
J.\ Funct.\ Anal.\ {\bf 229} (2005), 462--480. }

\bibitem{polynonsub} R.\ Curto, M.\ Putinar,
\emph{Nearly subnormal operators and moment problems},
J.\ Funct.\ Anal.\ {\bf 115} (1993), no. 2, 480--497.

\bibitem{curto-putinar} R.\ Curto, M.\ Putinar,
\emph{Polynomially hyponormal operators}.
In: A Glimpse at Hilbert Space Operators. Operator Theory Advances and Applications,
vol 207.\,(2010) Springer, Basel.

\bibitem{campbell} P.\ Dibrell, J.\,T.\ Campbell,
\emph{Hyponormal powers of composition operators},
Proc.\ Amer.\ Math.\ Soc.\ {\bf 102} (1988), 914--918.

\bibitem{curto2k} R. E. Curto, W. Y. Lee,
\textit{$k$-hyponormality of finite rank perturbations of unilateral weighted shifts},
Trans. Amer. Math. Soc. 357\,(2005), 4719--4737.

\bibitem{fuglede} B.\ Fuglede,
\textit{The multidimensional moment problem},
Expo.\ Math.\ {\bf 1} (1983), 47--65.
\end{comment}

\bibitem{halmos} P.\,R.\ Halmos,
\textit{Normal dilations and extensions of operators},
Summa Brasil.\ Math.\ {\bf 2} (1950), 125--134.

\begin{comment}
\bibitem{halmosPB} P.\,R.\ Halmos,
\emph{A Hilbert space problem book},
Springer-Verlag, New York Inc., 1982.

\bibitem{back-ext-2D} M.\ Hazarika, B.\ Kalita,
\textit{Back-step extension of weighted shifts},
Bull.\ Calcutta Math.\ Soc\ {\bf 106} (2014), no. 3, 169--188.}
\end{comment}

\bibitem{back-ext2} T.\ Hoover, I.\,B.\ Jung, A.\ Lambert,
\textit{Moment sequences and backward extensions of subnormal weighted shifts},
J.\ Austral.\ Math.\ Soc.\ {\bf 73} (2002), 27--36.

%\bibitem{ito}T.\ Ito, T.\,K.\ Wong,
%\emph{Subnormality and quasinormality of Toeplitz operators},
%Proc.\ Amer.\ Math.\ Soc.\ {\bf 34} (1972), 157-164.

\bibitem{jab} Z.\,J.\ Jabłoński,
\emph{Complete hyperexpansivity, subnormality
and inverted boundedness conditions},
Integr.\ equ.\ oper. theory {\bf 44} (2002), 316--336.

\bibitem{szifty} Z.\ Jab\l o\'nski, I.\,B.\ Jung, J.\ Stochel,
\textit{Weighted shifts on directed trees},
Mem.\ Amer.\ Math.\ Soc.\ {\bf 216} (2012), no.\,1017.

\bibitem{hypexp-ext} Z.\,J.\ Jab\l o\'nski, I.\,B.\ Jung, J.\ Stochel,
\textit{Backward Extensions of Hyperexpansive Operators},
Studia Math.\ {\bf 173} (2006), 233--257.

%\bibitem{normal-ext} Z.\,J.\ Jab\l o\'nski, I.\,B.\ Jung, J.\ Stochel,
%\emph{Normal extensions escape from the class of weighted shifts on directed trees},
%Complex Anal.\ Oper.\ Theory {\bf 7} (2013), 409--419.

\bibitem{back-ext-sub} I.\,B.\ Jung, A.\ Lambert, J.\ Stochel,
\textit{Backward extensions of subnormal operators},
Proc.\ Amer.\ Math.\ Soc.\ {\bf 132} (2004), 2291--2302.

\bibitem{back-ext3} I.\,B.\ Jung, C.\ Li,
\textit{Backward extensions of hyponormal weighted shifts},
Math.\ Japon.\ {\bf 52} (2000), no. 2, 267--278.

\bibitem{back-ext1} I.\,B.\ Jung, C.\ Li,
\textit{A formula for $k$-hyponormality of backstep extensions of subnormal weighted shifts},
Proc.\ Amer.\ Math.\ Soc.\ {\bf 129} (2001), 2343--2351.

\begin{comment}
\bibitem{lambert} A.\ Lambert,
\textit{Subnormality and weighted shifts},
J.\ London Math.\ Soc.\ {\bf 14} (1976), 476--480.

\bibitem{coul-paulsen} S.\ McCullough, V.\ Paulsen,
\emph{A note on joint hyponormality},
Proc.\ Amer.\ Math.\ Soc.\ {\bf 107} (1989), 187--195.
\end{comment}

\bibitem{pikul} P.\ Pikul,
\textit{Backward extensions of weighted shifts on directed trees},
Integr.\ Equ.\ Oper.\ Theory {\bf 94} (2022), \#26.

%\bibitem{rudin} W.\ Rudin, 
%\textit{Real and Complex Analysis}, McGraw-Hill, New York 1987.

\bibitem{shields} A.\,L.\ Shields,
\textit{Weighted shift operators and analytic function theory},
Topics in operator theory, Math.\ Surveys, No.\,13, Amer.\ Math.\ Soc.,
Providence, R.I., 1974, pp.\ 49--128.

\begin{comment}
\bibitem{momenty} J.\,A.\ Shohat, J.\,D.\ Tamarkin,
\textit{The problem of moments},
Math.\ Surveys 1, Amer.\ Math.\ Soc., Providence, R.I., 1943.

\bibitem{simon} B.\ Simon,
\emph{The classical moment problem as a self-adjoint finite difference operator},
Adv.\ Math.\ {\bf 137} (1998), 82--203.
\end{comment}

\bibitem{stamp} J.\,G.\ Stampfli,
\emph{Which weighted shifts are subnormal?},
Pacific J.\ Math.\ {\bf 17} (1966), no.\ 2, 367--379.

\begin{comment}
\bibitem{unb-subn1} J.\ Stochel, F.\,H.\ Szafraniec,
\emph{On normal extensions of unbounded operators.\ I},
J.\ Operator Theory {\bf 14} (1985), 31--55.

\bibitem{unb-subn2} J.\ Stochel, F.\,H.\ Szafraniec,
\emph{On normal extensions of unbounded operators.\ II},
Acta.\ Sci.\ Math.\ (Szeged) {\bf 53} (1989), 153--177.

\bibitem{unb-subn3} J.\ Stochel, F.\,H.\ Szafraniec,
\emph{On normal extensions of unbounded operators.\ III.\ Spectral properties}, 
Publ.\ RIMS, Kyoto Univ.\ {\bf 25} (1989), 105--139.
\end{comment}

\bibitem{fhs-momenty} F.\,H.\ Szafraniec,
\textit{Boundedness of the shift operator related to positive definite forms:
an application to moment problems},
Ark.\ Mat.\ {\bf 19} (1981), 251--259.

\bibitem{fhs-posdef} F.\,H.\ Szafraniec,
\emph{On extending backwards positive definite sequences},
Numer.\ Algorithms \textbf{3} (1992), 419--426. 

\end{thebibliography}
\end{document}